\def\mojiparline#1{
    \newcounter{mpl}
    \setcounter{mpl}{#1}
    \@tempdima=\linewidth
    \advance\@tempdima by-\value{mpl}zw
    \addtocounter{mpl}{-1}
    \divide\@tempdima by \value{mpl}
    \advance\kanjiskip by\@tempdima
    \advance\parindent by\@tempdima
}
\def\R{\mathbb{R}}
\def\N{\mathbb{N}}
\def\calB{\mathcal{B}}
\def\calC{\mathcal{C}}
\def\calF{\mathcal{F}}
\def\calL{\mathcal{L}}
\def\calM{\mathcal{M}}
\def\calN{\mathcal{N}}
\def\calR{\mathcal{R}}
\def\calS{\mathcal{S}}
\def\calT{\mathcal{T}}
\def\calZ{\mathcal{Z}}
\def\bbB{\mathbb{B}}
\def\bbX{\mathbb{X}}
\def\bbY{\mathbb{Y}}
\def\e{\varepsilon}
\theoremstyle{plain}
	\newtheorem{theorem}{Theorem}[section]
	\newtheorem{lemma}[theorem]{Lemma}
	\newtheorem{corollary}[theorem]{Corollary}
	\newtheorem{proposition}[theorem]{Proposition}
	\newtheorem{remark}[theorem]{Remark}
\theoremstyle{plain}
	\newtheorem{maintheorem}{Theorem}
\begin{document}
% 一行あたり文字数の指定
%\mojiparline{70}
% 1ページあたり行数の指定
%\linesparpage{36}
%\begin{frontmatter}

%% Title, authors and addresses

%% use the tnoteref command within \title for footnotes;
%% use the tnotetext command for the associated footnote;
%% use the fnref command within \author or \address for footnotes;
%% use the fntext command for the associated footnote;
%% use the corref command within \author for corresponding author footnotes;
%% use the cortext command for the associated footnote;
%% use the ead command for the email address,
%% and the form \ead[url] for the home page:
%%
%% \title{Title\tnoteref{label1}}
%% \tnotetext[label1]{}
%% \author{Name\corref{cor1}\fnref{label2}}
%% \ead{email address}
%% \ead[url]{home page}
%% \fntext[label2]{}
%% \cortext[cor1]{}
%% \address{Address\fnref{label3}}
%% \fntext[label3]{}

\title[Supercritical Neumann problem]{Structure of\\ the positive radial solutions for\\ the supercritical Neumann problem $\e^2\Delta u-u+u^p=0$ in a ball}

%% use optional labels to link authors explicitly to addresses:
%% \author[label1,label2]{<author name>}
%% \address[label1]{<address>}
%% \address[label2]{<address>}

\author{Yasuhito Miyamoto}

\thanks{This work was partially supported by the Japan Society for the Promotion of Science, Grant-in-Aid for Young Scientists (B) (Subject No.~24740100) and by Keio Gijuku Academic Development Funds.\\
To appear in {\it The special issue for the 20th anniversary, the Journal of Mathematical Sciences, the University of Tokyo.}}

\address{Graduate School of Mathematical Sciences, The University of Tokyo,
3-8-1 Komaba, Meguro-ku, Tokyo 153-8914, Japan}
\email{miyamoto@ms.u-tokyo.ac.jp}
\subjclass[2000]{Primary 35J25, 25B32; Secondary 34C23, 34C10.}
\date{\today}

\begin{abstract}
We are interested in the structure of the positive radial solutions of the supercritical Neumann problem in a unit ball
\[
\begin{cases}
\e^2\left(U''+\frac{N-1}{r}U'\right)-U+U^p=0, & 0<r<1,\\
U'(1)=0, & \\
U>0, & 0<r<1,
\end{cases}
\]
where $N$ is the spatial dimension and $p>p_S:=(N+2)/(N-2)$, $N\ge 3$.
We show that there exists a sequence $\{\e_n^*\}_{n=1}^{\infty}$ ($\e_1^*>\e_2^*>\cdots\rightarrow 0$) such that this problem has infinitely many singular solutions $\{(\e_n^*,U_n^*)\}_{n=1}^{\infty}\subset\R\times (C^2(0,1)\cap C^1(0,1])$ and that the nonconstant regular solutions consist of infinitely many smooth curves in the $(\e,U(0))$-plane.
It is shown that each curve blows up at $\e_n^*$ and if $p_S<p<p_{JL}$, then each curve has infinitely many turning points around $\e_n^*$. Here,
\[
p_{JL}:=
\begin{cases}
1+\frac{4}{N-4-2\sqrt{N-1}} & (N\ge 11),\\
\infty & (2\le N\le 10).
\end{cases}
\]
In particular, the problem has infinitely many solutions if $\e\in\{\e_n^*\}_{n=1}^{\infty}$.
We also show that there exists $\bar{\e}>0$ such that the problem has no nonconstant regular solution if $\e>\bar{\e}$.
The main technical tool is the intersection number between the regular and singular solutions.
\end{abstract}

%\begin{keyword}
%% keywords here, in the form: keyword \sep keyword
%Infinitely many turning points \sep Bifurcation diagram \sep Supercritical exponent \sep Joseph-Lundgren exponent \sep Intersection number \sep Singular solutions \sep Elliptic Neumann problem

%% MSC codes here, in the form: \MSC code \sep code
%% or \MSC[2008] code \sep code (2000 is the default)

%\MSC[2010] 35B32 \sep 35B33 \sep 35J61 \sep 35J25
%\end{keyword}
%\end{frontmatter}

%%
%% Start line numbering here if you want
%%
% \linenumbers
\maketitle
\tableofcontents

% ****************************************************************************
% ****************************************************************************
% ****************************************************************************
% Section 1
% ****************************************************************************
% ****************************************************************************
% ****************************************************************************

\section{Introduction and main results}
Let $\Omega\subset\R^N$, $N\ge 3$ be a bounded domain with smooth boundary. We are concerned with the elliptic Neumann problem
\begin{equation}\label{S1E1}
\begin{cases}
\e^2\Delta U-U+U^p=0, & \textrm{in}\ \Omega,\\
\frac{\partial}{\partial\nu}U=0, & \textrm{on}\ \partial\Omega,\\
U>0, & \textrm{in}\ \Omega,
\end{cases}
\end{equation}
where $p>1$ and $\e\in\R_+:=\{x;\ x>0\}$. The problem (\ref{S1E1}) arises in physical and biological models. In particular, (\ref{S1E1}) appears in the stationary problem of the Keller-Segel model for chemotaxis aggregation \cite{KS70} and the shadow system of the Gierer-Meinhardt model for biological pattern formations \cite{GM72}. For these two decades (\ref{S1E1}) has attracted considerable attention and solutions with various shapes have been found. See \cite{W97,W99} for single-peak solutions, \cite{GW00} for multi-peak solutions, and \cite{MM02} for boundary concentrating solutions. Many papers, including \cite{GW00,MM02,NT91,W97,W99}, study the subcritical case $1<p<p_S$, where
\[
p_S:=
\begin{cases}
\frac{N+2}{N-2}, & (N\ge 3),\\
\infty, & (N=1,2).
\end{cases}
\]
In this case the compact embedding $H^1(\Omega)\hookrightarrow L^{p+1}(\Omega)$ is applicable, hence a variational method works well.
In this paper we are interested in the solution structure of (\ref{S1E1}) in the supercritical case $p>p_S$.
Specifically, we consider the case where the domain $\Omega$ is the unit ball $B:=\{x\in\R^N;\ |x|<1\}$, and study the bifurcation diagram of the positive radial solutions in the $(\lambda,U)$ plane.
The problem (\ref{S1E1}) can be reduced to the ODE
\begin{equation}\label{S1E2}
\begin{cases}
\e^2\left( U''+\frac{N-1}{r}U'\right)-U+U^p=0, & 0<r<1,\\
U'(1)=0, & \\
U>0,& 0\le r\le 1,
\end{cases}
\end{equation}
Throughout the present paper we define $f(U):=-U+U^p$ and $\lambda:=1/\e^2$. 
Then $\lambda\in\R_+$ and $\lambda$ diverges as $\e\downarrow 0$. Since we study the bifurcation diagram of (\ref{S1E2}), it is convenient to transform (\ref{S1E2}) into
\begin{equation}\label{N}
\begin{cases}
U''+\frac{N-1}{r}U'+\lambda f(U)=0, & 0<r<1,\\
U'(1)=0, & {}\\
U>0,& 0\le r\le 1.
\end{cases}
\end{equation}
We need some notations.
We call the constant solutions $\{(\lambda,1)\}$ {\it the trivial branch} which is denoted by $\calC_0$.
Let $\Delta_N$ denote the Neumann Laplacian, and let $\{\mu_n\}_{n=0}^{\infty}$ be the eigenvalues of $-\Delta_N$ on $B$ in the space of radial functions.
Since each $\mu_n$ is simple, $0=\mu_0<\mu_1<\mu_2<\cdots$.
Let $\bar{\lambda}_n:=\mu_n/(p-1)$.
By Crandall-Rabinowitz bifurcation theorem we easily see that $(\bar{\lambda}_n,1)$ $(n=1,2,\cdots)$ is a bifurcation point from which a curve of nontrivial solutions emanates.
We denote the closure of the curve by $\calC_n$.
We will see in Proposition~\ref{LPR} of the present paper that each curve $\calC_n$, which we call the branch, can be locally parametrized by $\gamma:=U_n(0)$.
Then each curve $\calC_n$ can be described as $\{(\lambda_n(\gamma),U_n(r,\gamma))\}$ ($\gamma:=U_n(0,\gamma)$).
We define
\[
\calC^+_n:=\calC_n\cap\{\gamma>1\},\quad\calC^-_n:=\calC_n\cap\{\gamma<1\}.
\]
By $\calZ_I[v(\,\cdot\,)]$ we denote the number of the zeros of the function $v(\,\cdot\,)$ in the interval $I\subset\R$, i.e.,
\[
\calZ_I[v(\,\cdot\,)]:=\sharp\{x\in I;\ v(x)=0\}.
\]
Let $(\lambda_n,U_n)\in\calC_n$.
Every zero of $U_n(r)-1$ $(0\le r\le 1)$ is simple, because of the uniqueness of the solution of the ODE (\ref{N}).
If $(\lambda_n,U_n)$ is near $(\bar{\lambda}_n,1)$, then $U_n(r)-1$ is close to the $n$-th eigenfunction of the corresponding eigenvalue problem. By Sturm-Liouville theory, $\calZ_{[0,1]}[U_n(\,\cdot\,,\gamma)-1]=n$ provided that $(\lambda_n,U_n)$ is near $(\bar{\lambda}_n,1)$.
Since each zero is simple and $U_n(r)-1$ continuously changes along each $\calC_n$, $\calZ_{[0,1]}[U_n(\,\cdot\,,\gamma)-1]$ is preserved along each $\calC_n$.
Therefore, $\calZ_{[0,1]}[U_n(\,\cdot\,,\gamma)-1]=n$ if $U_n\in \calC_n\backslash\calC_0$.

Let us recall known results about (\ref{N}).
It was shown in \cite{K89} that in the subcritical case $1<p<p_S$ the set of the regular solutions of (\ref{N}) is bounded in $L^{\infty}$, i.e.,
\[
(\gamma^*:=)\sup\{\|U\|_{\infty};\ \textrm{$U$ is a solution of (\ref{N}).}\}<\infty.
\]
and that $\lambda_1(\gamma)\rightarrow\infty$ as $\gamma\uparrow\gamma^*$.
Since $\left.\frac{d\lambda_1(\gamma)}{d\gamma}\right|_{\gamma=1}<0$ (\cite{MY13}), the branch $\calC_1$ has at least one turning point.
Moreover, if $\gamma(<\gamma^*)$ is close to $\gamma^*$, then $U_n(\gamma)$ is nondegenerate, hence $\frac{d\lambda_1(\gamma)}{d\gamma}>0$ (\cite{W99}).
In the critical case $p=p_S$ the solution structure depends on the spatial dimension $N$.
For $N\in\{4,5,6\}$, (\ref{N}) admits a (nonconstant) radially decreasing solution for $0<\lambda<\bar{\lambda}_1$ (\cite{AY91}), which implies that $\calC_1$ is unbounded in $\gamma$.
For $N\ge 7$, there is $\underline{\lambda}>0$ such that (\ref{N}) has no nonconstant solution for $0<\lambda<\underline{\lambda}$ (\cite{AY91,AY97,BKP91}).
When $N=3$, the structure depends on the radius of the ball.
See \cite{AY91} for a partial result.
In the supercritical case $p>p_S$, there are few results about (\ref{S1E1}).
When $p=p_S+\e$ for small $\e>0$, \cite{DMP05} constructed a bubble tower solution of (\ref{S1E1}).
See \cite{N83,LN88,GN12} for other results.
A brief history of this problem is written in \cite{GN12}.

On the other hand, the branch of the positive solutions of the critical or supercritical Dirichlet problem
\begin{equation}\label{S1E4}
\begin{cases}
U''+\frac{N-1}{r}U'+\lambda g(U)=0, & 0<r<1,\\
U(1)=0, & \\
U>0, & 0\le r<1,
\end{cases}
\end{equation}
was studied by \cite{BN87,DF07,GW11,JL73,Mi13}.
In \cite{JL73} the case $g(U)=(1+U)^p$ was studied.
The structure depends on $p$ and $N$.
Let
\[
%\begin{equation}\label{PJL}
p_{JL}:=
\begin{cases}
1+\frac{4}{N-4-2\sqrt{N-1}}, & (N\ge 11),\\
\infty, & (2\le N\le 10).
\end{cases}
\]
%\end{equation}
When $p_S<p<p_{JL}$, Joseph-Lundgren\cite{JL73} have shown that the branch which emanates from $(0,0)$ has infinitely many turning points around $\lambda=\lambda^*:=\theta\left( N-2-\theta\right)$ and blows up at $\lambda=\lambda^*$, where
\begin{equation}\label{theta}
\theta:=\frac{2}{p-1}.
\end{equation}
Moreover, there is a singular solution $U^*=r^{-\theta}-1\in H^1_0(B)$ for $\lambda=\lambda^*$.
When $p\ge p_{JL}$, the branch exists for $0<\lambda<\lambda^*$, does not have a turning point, and blows up at $\lambda=\lambda^*$.
Moreover the singular solution $(\lambda^*,U^*)$ exists.
In particular, (\ref{N}) has a unique regular solution for $0<\lambda<\lambda^*$.

First we study the problem
\begin{equation}\label{S3E2}
\begin{cases}
u''+\frac{N-1}{s}u'+f(u)=0, & s>0,\\
u(0)=\gamma,\ u'(0)=0. &
\end{cases}
\end{equation}
In the study of the bifurcation diagram singular solutions play an important role.
\begin{theorem}\label{Th0}
Suppose that $p>p_S$.
There is a singular positive solution $u^*(s)$ of
\[
\begin{cases}
u''+\frac{N-1}{s}u'+f(u)=0,& 0<s<\infty,\\
u(s)\rightarrow \infty\quad (s\downarrow 0).
\end{cases}
\]
$u^*(s)$ oscillates around $1$ hence it has infinitely many critical points $\{s_n^*\}_{n=1}^{\infty}$. Moreover, $u(s,\gamma)\rightarrow u^*(s)$ in $C_{loc}^0(0,\infty)$ as $\gamma\rightarrow\infty$.
Here $u(s,\gamma)$ is the solution of (\ref{S3E2}).
\end{theorem}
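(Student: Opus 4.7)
The plan is to construct $u^*$ as the $C_{loc}^0(0,\infty)$-limit of the regular solutions $u(\,\cdot\,,\gamma)$ of (\ref{S3E2}) as $\gamma\to\infty$, and then read off its behavior separately at $s=0$ and at $s=\infty$. Two tools will carry the argument. First, the rescaling $\tilde u(t) := \gamma^{-1} u(\gamma^{-(p-1)/2} t,\gamma)$ turns (\ref{S3E2}) into the perturbed pure-power problem $\tilde u'' + \frac{N-1}{t}\tilde u' - \gamma^{1-p}\tilde u + \tilde u^p = 0$ with $\tilde u(0)=1$, $\tilde u'(0)=0$, which as $\gamma\to\infty$ tends on compact $t$-intervals to the supercritical pure-power equation $w'' + \frac{N-1}{t}w' + w^p = 0$, $w(0)=1$, $w'(0)=0$; its solution $w_0$ is global, positive, and satisfies $w_0(t)\sim A_0 t^{-\theta}$ as $t\to\infty$, where $\theta := 2/(p-1)$ and $A_0 := (\theta(N-2-\theta))^{1/(p-1)}$. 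Second, the energy $E(s) := u'(s)^2/2 + F(u(s))$ with $F(u) := -u^2/2 + u^{p+1}/(p+1)$ is nonincreasing, since $E' = -\frac{N-1}{s}(u')^2 \le 0$.

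Global existence of $u(\,\cdot\,,\gamma)$ on $(0,\infty)$ is then immediate from energy decay plus the coercivity of $F$. To produce $u^*$, the rescaling combined with the asymptotics of $w_0$ yields a two-sided bound $c\,s^{-\theta}\le u(s,\gamma)\le C s^{-\theta}$ on any compact $K\subset(0,\infty)$, and the ODE gives equicontinuity of the family. Arzel\`a--Ascoli plus a diagonal argument produce $\gamma_n\to\infty$ and a positive $u^*\in C^2(0,\infty)$ with $u(\,\cdot\,,\gamma_n)\to u^*$ in $C_{loc}^0(0,\infty)$; $u^*$ solves the full ODE, and the lower bound forces $u^*(s)\to\infty$ as $s\downarrow 0$. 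To upgrade subsequential convergence to convergence of the whole family, I would invoke the Emden--Fowler substitution $u(s)=s^{-\theta}v(\log s)$, which up to a term vanishing at $s\downarrow 0$ gives the autonomous planar system $v'' + (N-2-2\theta)v' - \theta(N-2-\theta)v + v^p = 0$; the equilibrium $v\equiv A_0$ is hyperbolic with one-dimensional stable manifold at $t=-\infty$, so exactly one trajectory of (\ref{S3E2}) yields a singular solution with leading behavior $A_0 s^{-\theta}$, and uniqueness of $u^*$ upgrades the subsequential limit to $u(\,\cdot\,,\gamma)\to u^*$ for the whole family.

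For the oscillation and critical-point claim, $E(u^*(s))$ decreases and is bounded below by $F(1)<0$, hence $E\to E_\infty$ and $\int_{s_0}^\infty s^{-1}(u^*)'(s)^2\,ds<\infty$. A LaSalle-type argument using the autonomous limit $u''+f(u)=0$ excludes all equilibria of the reduced flow except $u=1$, which yields $(u^*(s),(u^*)'(s))\to(1,0)$. Setting $w:=u^*-1$, the equation becomes $w''+\frac{N-1}{s}w'+(p-1)w+O(w^2)=0$, and a Sturm comparison with the linear Bessel-type equation $W''+\frac{N-1}{s}W'+(p-1)W=0$, whose solutions have infinitely many zeros accumulating at $+\infty$ with amplitude $O(s^{-(N-1)/2})$, gives infinitely many zeros of $w$, between consecutive pairs of which Rolle's theorem produces critical points of $u^*$, yielding the sequence $\{s_n^*\}$. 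The principal technical obstacle is the upgrade from compact-interval convergence of the rescaled family $\tilde u(\,\cdot\,,\gamma)$ to the two-sided bound $u(s,\gamma)\sim s^{-\theta}$ on every compact subset of $(0,\infty)$: this requires controlling $\tilde u$ on $t$-intervals whose length grows with $\gamma$, and is where the asymptotics of $w_0$ at infinity must be combined with the stable-manifold uniqueness in the Emden--Fowler picture.
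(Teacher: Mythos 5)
Your proposal has two genuine gaps, one of which you flag yourself but do not close, and one which you do not notice. The flagged one: your whole construction of $u^*$ rests on a two-sided bound $c\,s^{-\theta}\le u(s,\gamma)\le Cs^{-\theta}$, uniform in $\gamma$, on compact subsets of $(0,\infty)$, and more than that, on identifying every subsequential limit with \emph{the} unique solution having the precise asymptotics $s^{\theta}u(s)\to A$ as $s\downarrow 0$ (boundedness between two multiples of $s^{-\theta}$ is not enough to invoke your uniqueness statement). The locally uniform convergence of the rescaled family $\tilde u(\rho,\gamma)$ to the pure-power solution only controls $\rho$ in compact sets, i.e.\ $s$ in intervals shrinking like $\gamma^{-(p-1)/2}$; to reach a fixed compact $s$-interval you must control the orbit on time intervals growing with $\gamma$, where the discarded term $\gamma^{1-p}\tilde u$ (equivalently $m^2e^{2mt}y$) is no longer negligible without a quantitative estimate. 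This is exactly what the paper supplies in Lemma~\ref{S5L3}: once the orbit of the regular solution enters the energy sublevel set $\Gamma_{\e}$ near the equilibrium $(1,0)$, it stays in $\Gamma_{2\e}$ up to a time $T_{\e}$ \emph{independent of} $\gamma$, because the non-autonomous perturbation is exponentially small there; combined with the uniform bound of Lemma~\ref{S5L4} and the uniqueness of the singular solution (proved by a contraction argument on an integral equation in Lemma~\ref{S41L0}, not by a ``one-dimensional stable manifold'' -- at the relevant equilibrium both eigenvalues of the autonomous linearization have negative real part, so your hyperbolicity description is at best loose), this gives $\bar y(t)\to1$ as $t\to-\infty$ for any subsequential limit and hence full-family convergence. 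The paper, by contrast, constructs $u^*$ first, near $s=0$, independently of the regular solutions, and extends it by a Lyapunov function (Lemmas~\ref{S41L0}, \ref{S42L1}); your route makes the missing uniform estimate load-bearing.

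The unflagged gap is in the oscillation argument. Your LaSalle step asserts that all equilibria of the reduced flow except $u=1$ are excluded, but nothing in your argument rules out the scenario $(u^*(s),(u^*)'(s))\to(0,0)$ with $u^*$ monotonically decreasing: the energy $E$ starts large and positive near $s=0$ and may simply decrease to $0^{+}$, so monotonicity of $E$ gives no contradiction, and your Sturm comparison near $u=1$ never gets off the ground. This is precisely where the paper uses supercriticality: the monotone-decay-to-zero alternative is excluded by the Pohozaev-identity nonexistence result for singular solutions decaying at infinity (Lemma~\ref{S42+L2}, after Ni--Serrin), which in turn needs the sharp asymptotics of $u^*$ \emph{and} $(u^*)'$ at $s=0$ (Corollary~\ref{S41C1}, Lemma~\ref{S42+L1}, Corollary~\ref{S42+C1}) and the condition $p>p_S$ to make the boundary terms vanish and the coefficients positive. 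Only after the first critical point exists does the energy become negative, the orbit stays trapped away from $u=0$, and the Sturm oscillation argument (which you and the paper share) produces the infinitely many critical points $\{s_n^*\}$. Without an argument replacing Lemma~\ref{S42+L2}, your proof of the oscillation claim fails.
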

When $p>p_{JL}$, this theorem was already proved by \cite[Theorem~1.1]{CCCT10}.
Our method is different from \cite{CCCT10} and it works in the case $p_S<p\le p_{JL}$.
The same oscillation property of the regular solution of (\ref{S3E2}) was obtained by \cite{N83}.
Using almost the same argument, one can see that $u^*(s^*_1)<u^*(s^*_3)<\cdots<1$, $u^*(s^*_2)>u^*(s^*_4)>\cdots>1$, and $s^*_{n+1}-s^*_n\rightarrow \pi/\sqrt{p-1}$ $(n\rightarrow\infty)$.

By scaling the singular solution $u^*$ we construct singular solutions of (\ref{N}).
The first main result is the following:
\begin{maintheorem}\label{Th1}
Suppose that $p>p_S$.
The problem (\ref{N}) has infinitely many singular solutions $(\lambda_n^*,U_n^*(r))\in\R_+\times (C^2(0,1)\cap C^0(0,1]\cap H^1(B))$ ($n=1,2,\cdots$ and $\lambda_1^*<\lambda_2^*<\cdots\rightarrow \infty$) such that the following assertions hold:\\
(i) $U_n^*(r)$ satisfies
\[
U_n^*(r)=A(p,N)(\sqrt{\lambda_n^*}r)^{-\theta}(1+o(1))\quad\textrm{as}\quad (r\downarrow 0),
\]
where
\begin{equation}\label{Th2E2}
A(p,N):=\left\{\theta (N-2-\theta)\right\}^{\frac{1}{p-1}}.
\end{equation}
(ii) $\calZ_{(0,1]}[U_n^*(\,\cdot\,)-1]=n$.\\
(iii) $U_n^*(r)>0$ $(0<r\le 1)$.\\
Moreover, the singular solution $(\lambda_n^*,U_n^*)$ is unique, i.e., if $(\tilde{\lambda}^*_n,\tilde{U}^*_n)$ is a singular solution such that (i) and (ii) hold, then $(\tilde{\lambda}^*_n,\tilde{U}^*_n)=(\lambda_n^*,U_n^*)$.
\end{maintheorem}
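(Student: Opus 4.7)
The plan is to construct the family $\{(\lambda_n^*,U_n^*)\}_{n=1}^{\infty}$ by rescaling the single singular solution $u^*(s)$ of Theorem~\ref{Th0} so that successive critical points $s_n^*$ of $u^*$ are sent to the boundary $r=1$, producing the Neumann condition for free. Concretely, set $\lambda_n^*:=(s_n^*)^2$ and $U_n^*(r):=u^*(s_n^*r)=u^*(\sqrt{\lambda_n^*}\,r)$. A chain-rule computation shows that $U_n^*$ solves the ODE in (\ref{N}), while $U_n^{*\prime}(1)=s_n^*\,u^{*\prime}(s_n^*)=0$ by choice of $s_n^*$. Monotonicity $\lambda_1^*<\lambda_2^*<\cdots$ is immediate from $s_1^*<s_2^*<\cdots$, and $\lambda_n^*\to\infty$ follows from the asymptotic gap $s_{n+1}^*-s_n^*\to\pi/\sqrt{p-1}$ already noted after Theorem~\ref{Th0}.

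Once the construction is in place, the properties are verified one by one. Property~(i) follows directly from the singular expansion $u^*(s)=A(p,N)s^{-\theta}(1+o(1))$ as $s\downarrow 0$, which is the only algebraic leading-order ansatz that balances the Laplacian term against $u^p$ at the origin; substituting $s=\sqrt{\lambda_n^*}r$ gives (i). Property~(iii) is inherited from $u^*>0$ on $(0,\infty)$. For property~(ii) I use the oscillation picture recalled right after Theorem~\ref{Th0}: $u^*(s_{2k-1}^*)<1<u^*(s_{2k}^*)$ and $u^*$ is monotone between consecutive critical points, so $u^*-1$ has exactly one zero in each interval $(s_k^*,s_{k+1}^*]$. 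Together with the one zero in $(0,s_1^*]$ coming from $u^*(s)\to\infty$ and $u^*(s_1^*)<1$, this gives $\calZ_{(0,s_n^*]}[u^*-1]=n$, hence $\calZ_{(0,1]}[U_n^*-1]=n$. The regularity $U_n^*\in C^2(0,1)\cap C^1(0,1]$ is inherited from $u^*\in C^2(0,\infty)$, while the $H^1(B)$ membership reduces to showing $r^{-\theta}\in H^1$ near the origin in dimension $N$, i.e. $N-1-2(\theta+1)>-1$, which is exactly $N(p-1)>4$ and is implied by $p>p_S$.

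For uniqueness, suppose $(\tilde\lambda_n^*,\tilde U_n^*)$ satisfies (i) and (ii). Rescale by $\tilde v(s):=\tilde U_n^*(s/\sqrt{\tilde\lambda_n^*})$; then $\tilde v$ solves the $s$-ODE of (\ref{S3E2}) on $(0,\sqrt{\tilde\lambda_n^*}\,]$ with leading behavior $\tilde v(s)=A(p,N)s^{-\theta}(1+o(1))$ as $s\downarrow 0$. A standard contraction argument in a weighted space of perturbations of $A(p,N)s^{-\theta}$ near $s=0$ shows that this singular Cauchy problem has a unique solution, so $\tilde v\equiv u^*$ on their common interval. The Neumann condition then forces $\sqrt{\tilde\lambda_n^*}$ to be a critical point $s_m^*$ of $u^*$, and the zero count in~(ii) pins down $m=n$.

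The main obstacle is the uniqueness step: constructing a suitable Banach space of functions with the correct algebraic singularity at $0$, verifying contractivity of the associated integral operator (the nonlinear term $u^p$ is supercritical, so care is needed with the weight), and confirming that $A(p,N)$ as defined in (\ref{Th2E2}) is indeed the only admissible leading coefficient (this is the algebraic equation $A^{p-1}=\theta(N-2-\theta)$, which is exactly (\ref{Th2E2}) and uses $p>p_S$ to ensure $\theta<N-2$ so that $A>0$ is real).
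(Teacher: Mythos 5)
Your construction and verification follow essentially the same route as the paper: set $\lambda_n^*=(s_n^*)^2$ and $U_n^*(r)=u^*(\sqrt{\lambda_n^*}\,r)$, read (i)--(iii) off the properties of $u^*$ (monotonicity between the critical points giving the zero count $n$ on $(0,s_n^*]$), check $H^1(B)$ by direct integration near $r=0$, and deduce uniqueness from the uniqueness of the singular behaviour at $s=0$, which the paper establishes by exactly your proposed contraction argument in a space of perturbations of $As^{-\theta}$, implemented after the change of variables $y=A^{-1}s^{\theta}u(s)$, $t=m^{-1}\log s$ in Lemma~\ref{S41L0}. Two minor corrections: the gradient part of the $H^1$ estimate requires the differentiated asymptotics $(u^*)'(s)=-\theta As^{-\theta-1}(1+o(1))$, which is not a formal consequence of (i) alone and is supplied in the paper by Lemma~\ref{S42+L1} and Corollary~\ref{S42+C1}; and the integrability condition is $(N-2)(p-1)>4$ (which is equivalent to $p>p_S$), not $N(p-1)>4$.
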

In the proof of Theorem~\ref{Th1} we see that $\sqrt{\lambda^*_n}=s^*_n$.
Hence, $\sqrt{\lambda^*_{n+1}}-\sqrt{\lambda^*_n}\rightarrow\pi/\sqrt{p-1}$ $(n\rightarrow\infty)$.

The second main result of the paper is the following:
\begin{maintheorem}\label{Th2}
Suppose that $p>p_S$.
Let $\lambda_n^*$ $(n=1,2,\cdots)$ be as in Theorem~\ref{Th1}.
Let $\calS$ denote the set of the regular solutions of (\ref{N}).
Then
\begin{equation}\label{Th3E0}
\calS=\calC_0\cup\bigcup_{n=1}^{\infty}\left(\calC^+_n\cup\calC^-_n\right).
\end{equation}
Each $\calC_n$ $(n=1,2,\cdots)$ can be parametrized by $\gamma=U_n(0)$, hence $\calC_n$ can be described as $\{(\lambda_n(\gamma),U_n(r,\gamma))\}$.
Moreover, $\lambda_n(\gamma)\in C^1(0,\infty)$ and the following assertions hold:\\
(i) For each $n\ge 1$, $\lambda_n(1)=\bar{\lambda}_n$.\\
(ii) For each $n\ge 1$, $\lambda_n(\gamma)\rightarrow\lambda_n^*$ $(\gamma\rightarrow\infty)$.\\
(iii) If $p_S<p<p_{JL}$, then for each $n\ge 1$, $\lambda_n(\gamma)$ oscillates around $\lambda_n^*$ infinitely many times as $\gamma\rightarrow\infty$.\\
(iv) For each $n\ge 1$, $\lambda_n(\gamma)\rightarrow\infty$ $(\gamma\downarrow 0)$.\\
(v) If $\gamma>0$ is small, then $U_1(r,\gamma)$ is nondegenerate, and it becomes a boundary concentrating solution as $\gamma\downarrow 0$.\\
(vi) For each $\gamma\in\R_+$, $\lambda_1(\gamma)<\lambda_2(\gamma)<\cdots$.
\end{maintheorem}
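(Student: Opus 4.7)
The plan is to reduce (\ref{N}) to the initial-value problem (\ref{S3E2}) via the scaling $U(r)=u(\sqrt{\lambda}\,r,\gamma)$, so that the Neumann condition $U'(1)=0$ becomes the statement that $s=\sqrt{\lambda}$ is a critical point of $s\mapsto u(s,\gamma)$. Let $0<s_1(\gamma)<s_2(\gamma)<\cdots$ denote the positive critical points of $u(\cdot,\gamma)$, and set $\lambda_n(\gamma):=s_n(\gamma)^2$ and $U_n(r,\gamma):=u(\sqrt{\lambda_n(\gamma)}\,r,\gamma)$. The first task is to show that each $s_n(\gamma)$ with $\gamma\ne 1$ is a simple critical point, i.e.\ $u''(s_n(\gamma),\gamma)\ne 0$: if $u'(s_0)=u''(s_0)=0$, the ODE forces $f(u(s_0))=0$ and hence $u(s_0)\in\{0,1\}$, and the energy identity $E(s)=\tfrac12 u'(s)^2+F(u(s))$ with $F(u)=-\tfrac{u^2}{2}+\tfrac{u^{p+1}}{p+1}$, $E'(s)=-\tfrac{N-1}{s}u'(s)^2$, combined with uniqueness of the initial-value problem, rules this out except on the trivial branch. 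Simplicity plus smooth dependence of ODE solutions on initial data yield, via the implicit function theorem, $\lambda_n\in C^1(0,\infty)$ and the advertised parametrization of each $\calC_n$. Completeness (\ref{Th3E0}) follows because every regular solution $U$ of (\ref{N}) with $U(0)=\gamma\ne 1$ satisfies $u'(\sqrt{\lambda},\gamma)=0$, so $\sqrt{\lambda}=s_n(\gamma)$ for some $n$, the index being fixed by the Sturmian invariant $\calZ_{[0,1]}[U_n(\cdot,\gamma)-1]=n$ recorded in the introduction. Items (i) and (vi) are then immediate: (i) is Crandall--Rabinowitz bifurcation at $(\bar{\lambda}_n,1)$, and (vi) comes from the ordering $s_1<s_2<\cdots$.

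For the asymptotic items (ii), (iv), (v), I exploit the limiting dynamics of $u(\cdot,\gamma)$ at the two endpoints of the $\gamma$-range. For (ii), Theorem~\ref{Th0} gives $u(\cdot,\gamma)\to u^*$ in $C^0_{\mathrm{loc}}(0,\infty)$ as $\gamma\to\infty$; the critical points $s_n^*$ of $u^*$ are simple (again by the energy identity), so standard continuous-dependence arguments transfer convergence of the solutions to convergence of critical points, $s_n(\gamma)\to s_n^*$, whence $\lambda_n(\gamma)\to\lambda_n^*$. For (iv), as $\gamma\downarrow 0$ the nonlinearity in (\ref{S3E2}) is dominated by its linear part, $u(s,\gamma)=\gamma\,v(s)+O(\gamma^p)$ uniformly on bounded intervals, where $v$ is the strictly increasing modified-Bessel-type solution of $v''+\tfrac{N-1}{s}v'-v=0$ with $v(0)=1$, $v'(0)=0$; hence the first critical point $s_1(\gamma)$ escapes to infinity (of order $\log(1/\gamma)$ by matched asymptotics up to the level $u\sim 1$), and by (vi) every $s_n(\gamma)\to\infty$. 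For (v), setting $\e=1/\sqrt{\lambda_1(\gamma)}\downarrow 0$ and rescaling $U_1(r,\gamma)$ near $r=1$ by $y=(1-r)/\e$, the limit profile satisfies $w''-w+w^p=0$ on $(0,\infty)$ with $w'(0)=0$ and $w\to 0$ at infinity, i.e.\ the half-line ground state, whose classical nondegeneracy transfers to $U_1(\cdot,\gamma)$ by a standard perturbation argument and produces the boundary-concentration asymptotic.

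The main obstacle is (iii), where the oscillation of $\lambda_n(\gamma)$ around $\lambda_n^*$ must be extracted. My strategy is to monitor the intersection number $\calN(\gamma,R):=\calZ_{(0,R)}[u(\cdot,\gamma)-u^*(\cdot)]$ on a fixed large interval. The singular solution behaves like $A(p,N)s^{-\theta}(1+o(1))$ near the origin, and the linearization of (\ref{S3E2}) about $u^*$ reduces there to an Euler-type equation with indicial polynomial $\alpha^2+(N-2)\alpha+p\theta(N-2-\theta)=0$; its discriminant $(N-2)^2-4p\theta(N-2-\theta)$ is strictly negative precisely when $p_S<p<p_{JL}$, so any nontrivial perturbation of $u^*$ accumulates infinitely many zeros at $s=0$. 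Combined with the $C^0_{\mathrm{loc}}$-convergence $u(\cdot,\gamma)\to u^*$, this forces $\calN(\gamma,R)\to\infty$ as $\gamma\to\infty$ for every fixed $R>0$. The delicate final step---translating this growth of $\calN$ into infinite oscillation of the individual critical point $s_n(\gamma)$---is where I expect the real work to lie. The idea is a Sturmian comparison in a fixed small neighbourhood of $s_n^*$: by $C^2_{\mathrm{loc}}$-convergence only $s_n(\gamma)$ can lie in such a neighbourhood for large $\gamma$, and the accumulation of zeros of $u(\cdot,\gamma)-u^*(\cdot)$ there forces, via a parity argument on the sign of $u-u^*$ between consecutive critical points, that $s_n(\gamma)$ must cross $s_n^*$ infinitely often; this yields the desired oscillation of $\lambda_n(\gamma)$ around $\lambda_n^*$.
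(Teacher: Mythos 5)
Your overall architecture (reduction to the initial value problem (\ref{S3E2}), parametrization of $\calC_n$ by the central value via simplicity of critical points and the implicit function theorem, completeness via the Sturmian zero count, intersection numbers for (iii)) is the paper's, and your elementary small-$\gamma$ linearization for (iv) is actually simpler than the paper's route through the boundary-layer construction. But there are two genuine gaps. The first is at the very start: you set $\lambda_n(\gamma):=s_n(\gamma)^2$ without ever showing that the critical points $s_n(\gamma)$ exist for every $\gamma$. For $0<\gamma<1$ the decreasing energy $E=\tfrac12(u')^2-\tfrac{u^2}{2}+\tfrac{u^{p+1}}{p+1}$ is negative and confines $u\ge\gamma$, so Sturm comparison gives infinitely many oscillations about $1$; but for $\gamma>\left(\tfrac{p+1}{2}\right)^{1/(p-1)}$ the initial energy is positive and nothing in your argument prevents $u(\cdot,\gamma)$ from decreasing monotonically to $0$, i.e.\ from being a ground state, in which case $s_1(\gamma)$ does not exist and the global parametrization of $\calC_n^+$ (hence (ii), (iii)) collapses. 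This is exactly where the paper uses $p\ge p_S$: the nonexistence of a positive decreasing entire solution with $u(\infty)=0$ (Proposition~\ref{S22P2}, a Pohozaev-identity result) is invoked in Lemma~\ref{BDDB} to produce the first critical point, after which the energy traps $u$ above a positive level and Sturm comparison yields all the others; for subcritical $p$ the step you are implicitly taking is false at the ground-state value of $\gamma$. A smaller omission of the same kind occurs in (ii): $C^2_{loc}(0,\infty)$ convergence produces a critical point near $s_n^*$, but to know it is the $n$-th one you must exclude extra critical points near $s=0$ uniformly in $\gamma$ (the paper does this by an energy-trapping argument on $(0,\tilde{s}_1^*/2)$ in the proof of Theorem~\ref{S5T2}).

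The second gap is in (iii). Your mechanism for $\calN(\gamma,R)\to\infty$ does not work as stated: $u(\cdot,\gamma)-u^*$ is not a solution of the linearization about $u^*$, and near $s=0$ — the only region where intersections can pile up — $u(\cdot,\gamma)$ stays bounded while $u^*\sim A s^{-\theta}$ blows up, so $u(\cdot,\gamma)$ is nowhere near $u^*$ there and the indicial analysis about $u^*$ says nothing about it; moreover the intersections accumulate in a region shrinking to $s=0$ as $\gamma\to\infty$, which is invisible to $C^0_{loc}(0,\infty)$ convergence. The paper bridges precisely this by the blow-up rescaling $\tilde u(\rho):=u(s,\gamma)/\gamma$, $\rho=\gamma^{\frac{p-1}{2}}s$: $\tilde u\to\bar u(\cdot,1)$ and $\tilde u^*\to\bar u^*=A\rho^{-\theta}$, and the infinite intersection number of the regular and singular solutions of the limit equation (Proposition~\ref{S2P1}; your indicial discriminant is an equivalent criterion for $p_S<p<p_{JL}$, but it must be applied to this limit problem, not to $u^*$ directly) then gives (\ref{S1E7++}). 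Your final step is also misplaced: zeros of $u(\cdot,\gamma)-u^*$ do not accumulate in a fixed neighbourhood of $s_n^*$; rather, as $\gamma$ grows, simple zeros cross the endpoint $s=\sqrt{\lambda_n^*}$ of $(0,\sqrt{\lambda_n^*}]$ (they cannot enter at $s=0$, where $u^*-u=+\infty$), producing a sequence $\gamma_j$ with $u(\sqrt{\lambda_n^*},\gamma_j)=u^*(\sqrt{\lambda_n^*})$ and alternating sign of $u_s(\sqrt{\lambda_n^*},\gamma_j)$, and it is this alternation combined with $C^2$-closeness to $u^*$ near its local minimum or maximum (Theorem~\ref{S5T1}) that forces $\lambda_n(\gamma_j)-\lambda_n^*$ to change sign (Theorem~\ref{S6T1}). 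As written, your parity argument has no proof behind it — you say so yourself — and similarly in (v) the phrase ``standard perturbation argument'' conceals the actual content (uniform invertibility of the linearization and exponential decay of near-kernel eigenfunctions, the paper's Section~7).
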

Figure~\ref{fig1} shows the bifurcation diagram in the case $p_S<p<p_{JL}$.
\begin{figure}
\begin{center}
\includegraphics[scale=0.7]{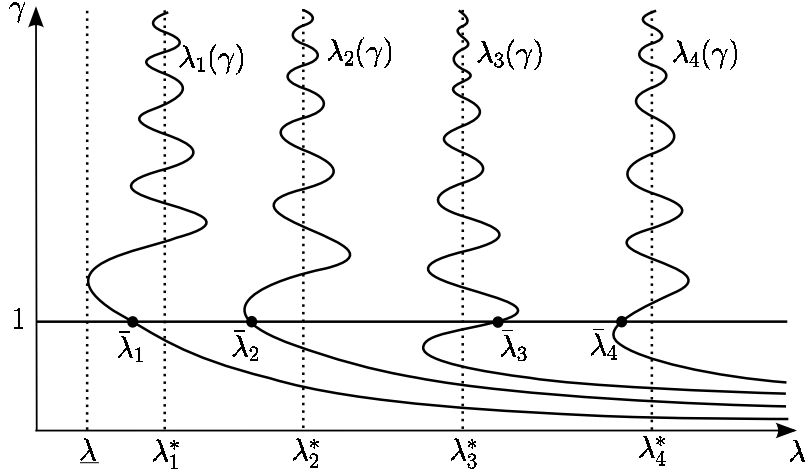}
\caption{The schematic picture of the bifurcation diagram of (\ref{N}) in the case $p_S<p<p_{JL}$.}
\label{fig1}
\end{center}
\end{figure}

(iv), (v), and (vi) of Theorem~\ref{Th2} hold for every $p>1$.

It follows from Theorem~\ref{Th2} (iii) that each branch $\calC^+_n$ has infinitely many turning points around $\lambda^*_n$ if $p_S<p<p_{JL}$.

\begin{remark}
If $3\le N\le 10$, then $p_{JL}=\infty$. Hence, the supercritical equation (\ref{N}) always has infinitely many solutions for $\lambda=\lambda^*_n$ $(n=1,2,\cdots)$ and each $\calC^+_n$ has infinitely many turning points.
\end{remark}

\begin{corollary}\label{Cor1}
Suppose that $p>p_S$.
There exists $\underline{\lambda}>0$ such that 
if $0<\lambda<\underline{\lambda}$, then (\ref{N}) has no nonconstant solution.
Moreover, there is $\bar{\lambda}>0$ such that the radially decreasing solution does not exist for $\lambda>\bar{\lambda}$.
\end{corollary}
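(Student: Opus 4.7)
The plan is to read both assertions off the global structure of the branches $\calC_n$ provided by Theorem~\ref{Th2}.

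For the first statement, Theorem~\ref{Th2}(vi) gives $\lambda_1(\gamma)<\lambda_n(\gamma)$ for every $n\ge 2$ and every $\gamma\in\R_+$, so together with the decomposition of $\calS$ in Theorem~\ref{Th2} the problem reduces to showing $\underline{\lambda}:=\inf_{\gamma>0}\lambda_1(\gamma)>0$. I would split $(0,\infty)$ into three pieces: by (iv), $\lambda_1(\gamma)$ is arbitrarily large on a neighborhood $(0,\delta]$ of $0$; by (ii), $\lambda_1(\gamma)$ stays close to the positive constant $\lambda_1^*$ on a neighborhood $[M,\infty)$ of $\infty$; and on the compact middle piece $[\delta,M]$ the $C^1$ function $\lambda_1$ is continuous and everywhere positive, so it attains a positive minimum there. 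Any $\underline{\lambda}$ less than the overall infimum then forbids nonconstant solutions for $\lambda<\underline{\lambda}$.

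For the second statement, I would first show that every nonconstant radially decreasing positive solution lies in $\calC^+_1$. Integrating $(r^{N-1}U')'=-\lambda r^{N-1}f(U)$ on $[0,1]$ and using $U'(0)=U'(1)=0$ yields $\int_0^1 r^{N-1}f(U)\,dr=0$, which rules out the possibility that $U$ is strictly above or strictly below $1$ on all of $[0,1]$ (in either case $f(U)$ would have constant sign). Combined with the monotonicity of $U$, this forces $\calZ_{[0,1]}[U-1]=1$ and $U(0)>1$, so the solution belongs to $\calC^+_1$ and its parameter $\lambda$ equals $\lambda_1(\gamma)$ for some $\gamma>1$.

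Finally, I would prove $\bar{\lambda}:=\sup_{\gamma>1}\lambda_1(\gamma)<\infty$. By Theorem~\ref{Th2}(i) and (ii), the $C^1$ function $\lambda_1$ extends continuously to the one-point compactification $[1,\infty]$ with values $\bar{\lambda}_1$ at $\gamma=1$ and $\lambda_1^*$ at $\gamma=\infty$; as a continuous function on a compact space it is bounded. Any $\bar{\lambda}$ slightly larger than this supremum then forbids radially decreasing solutions for $\lambda>\bar{\lambda}$. The only nontrivial ingredient here is this last boundedness, which rests on the absence of interior singularities of $\lambda_1(\gamma)$ on $(1,\infty)$, itself part of the content of Theorem~\ref{Th2}.
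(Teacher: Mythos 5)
Your proposal is correct and follows essentially the same route as the paper: the paper sets $\underline{\lambda}:=\inf_{\gamma\in\R_+}\lambda_1(\gamma)$, deduces $\underline{\lambda}>0$ from Theorem~\ref{Th2}(ii) and (iv), handles $n\ge 2$ via (vi), and obtains the second assertion from the boundedness of $\{\lambda_1(\gamma)\}_{\gamma>1}$, exactly as you do. Your only addition is to make explicit, via the integral identity $\int_0^1 r^{N-1}f(U)\,dr=0$, that a radially decreasing solution lies on $\calC^+_1$, a point the paper leaves implicit.
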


%\begin{corollary}\label{Cor2}
%Suppose that $p_S<p<p_{JL}$.
%There exists $\underline{\lambda}>0$ such that (\ref{N}) has a nonconstant solution if $\lambda\ge\underline{\lambda}$ and that (\ref{N}) has no nonconstant solution if $0<\lambda<\underline{\lambda}$.
%\end{corollary}

Let us explain technical details.
In \cite{JL73} they used a special change of variables in the study of (\ref{S1E4}) with $g(U)=(1+U)^p$.
The equation (\ref{S1E4}) can be transformed to a first order autonomous system.
See (\ref{S21E2+}) in the present paper.
They used a phase plane analysis to obtain the bifurcation diagram.
However, for a general nonlinearity $g$, we cannot expect such a change of variables.
In \cite{Mi13} the author studied the bifurcation diagram of the solutions of (\ref{S1E4}) when $g(U)=U^p+h(U)(>0)$ and $h(U)$ is a lower order term.
Let $u(s):=U(r)$ and $s:=\sqrt{\lambda}r$. The equation (\ref{S1E4}) becomes
\begin{equation}\label{S1E6}
\begin{cases}
u''+\frac{N-1}{s}u'+g(u)=0, & 0<s<\sqrt{\lambda},\\
u(\sqrt{\lambda})=0,\\
u>0, & 0\le s<\sqrt{\lambda}.
\end{cases}
\end{equation}
The equation (\ref{S1E6}) has a singular solution $(\lambda^*,u^*(s))$.
Let $(\lambda,u(s,\gamma))$ denote the regular solution of (\ref{S1E6}) such that $u(0,\gamma)=\gamma$ and $u'(0,\gamma)=0$.
It was shown that $\lambda$ is a $C^1$-function of $\gamma$.
He used the intersection number between $u^*(s)$ and $u(s,\gamma)$ which can be written as
\[
\calZ_{I_{\gamma}}[u^*(\,\cdot\,)-u(\,\cdot\,,\gamma)],
\]
where $I_{\gamma}:=(0,\min\{\sqrt{\lambda^*},\sqrt{\lambda(\gamma)}\}]$. 
He showed by a scaling argument that if $p_S<p<p_{JL}$, then 
\begin{equation}\label{S1E6+}
\calZ_{I_{\gamma}}[u^*(\,\cdot\,)-u(\,\cdot\,,\gamma)]\rightarrow\infty\quad (\gamma\rightarrow\infty).
\end{equation}
Because of the uniqueness of the solution of the ODE (\ref{S1E6}), each zero of $u^*(\,\cdot\,)-u(\,\cdot\,,\gamma)$ is simple.
Hence, each zero depends continuously on $\gamma$.
The number of the zeros in $I_{\gamma}$ is preserved if another zero does not come from the boundary of $I_{\gamma}$.
Since $u^*(0)-u(0,\gamma)=\infty$, a zero cannot enter $I_{\gamma}$ from $s=0$.
Because of (\ref{S1E6+}), a zero enters $I_{\gamma}$ from $s=\min\{\sqrt{\lambda^*},\sqrt{\lambda(\gamma)}\}$ infinitely many times as $\gamma\rightarrow\infty$.
Therefore, $\lambda(\gamma)$ oscillates around $\lambda^*$ infinitely many times which indicates that the branch has infinitely many turning points.
This method cannot be directly applied to the Neumann problem, since in the Neumann case $u(\sqrt{\lambda(\gamma)},\gamma)$ is not necessarily equal to $u^*(\sqrt{\lambda^*})$ and the oscillation of $\lambda(\gamma)$ around $\lambda^*$ is not trivial.
The shape of the singular solution $u^*$ of the Neumann problem becomes important to study the behavior of each zero.

In our problem (\ref{N}) we see that $\calC_n$ is locally parametrized by $\gamma=U_n(0)$ (Proposition~\ref{LPR}).
We can write the solution as $(\lambda_n(\gamma),U_n(r,\gamma))$.

Section~3 is devoted to the study of the fundamental property of $\lambda_n(\gamma)$.
In Lemma~\ref{BDDB} we show that for each $\gamma^*>1$, $\lambda_n(\gamma)$ does not diverge as $\gamma\uparrow\gamma^*$.
Therefore, combining this result and the local parametrization of $\calC_n$, we show in Lemma~\ref{GPR} that the domain of $\lambda_n(\gamma)$ can be extended to $\gamma>1$.
It is perhaps interesting to note that the nondivergence of $\lambda_n(\gamma)$ is proved by the nonexistence of the entire positive solution of $\Delta u-u+u^p=0$ $(p\ge p_S)$ which is proved by the Pohozaev identity.
We also show that $\lambda_n(\gamma)$ can be extended to $0<\lambda<1$.
Hence, $\lambda_n(\gamma)$ is defined in $\gamma\in\R_+$.

In Section~4 we study the singular solution of (\ref{N}) and prove Theorem~\ref{Th1}. Let $u(s):=U(r)$ and $s:=\sqrt{\lambda}r$.
The equation (\ref{N}) is transformed to the problem
\begin{equation}\label{S1E7}
\begin{cases}
u''+\frac{N-1}{s}u'+f(u)=0, & 0<s<\sqrt{\lambda},\\
u'(\sqrt{\lambda})=0,\\
u>0, & 0\le s\le\sqrt{\lambda}.
\end{cases}
\end{equation}
In Lemma~\ref{S41L0} we construct the singular solution $u^*(s)$ of the equation in (\ref{S1E7}) near $s=0$ and show that $u^*(s)=As^{-\theta}(1+o(1))$ ($s\downarrow 0$).
Here $A:=A(p,N)$ and $A(p,N)$ is defined by (\ref{Th2E2}).
In Lemma~\ref{S42L1} we show that the domain of $u^*(s)$ can be extended to $0<s<\infty$, that $u^*(s)$ satisfies the equation in (\ref{S1E7}), and that $u^*(s)>0$ for $s>0$.
In Lemma~\ref{S43L1} we show that $u^*(s)$ oscillates around $1$ infinitely many times as $s\rightarrow\infty$ and that $u^*(s)$ has the set of the critical points $\{s^*_n\}_{n=1}^{\infty}$ of $u^*$ such that $0<s^*_1<s^*_2<\cdots\rightarrow \infty$ and
\[
\begin{cases}
\textrm{$s^*_n$ is a local minimum point of $u^*$ and $u^*(s^*_n)<1$ if $n\in\{1,3,5,\cdots\}$},\\
\textrm{$s^*_n$ is a local maximum point of $u^*$ and $u^*(s^*_n)>1$ if $n\in\{2,4,6,\cdots\}$}.\\
\end{cases}
\]
We set $\lambda^*_n:=(s^*_n)^2$ and $U^*_n(r):=u^*(s)$ $(s=\sqrt{\lambda_n^*}r)$.
Then, $(\lambda^*_n,U^*_n)$ is a singular solution stated in Theorem~\ref{Th1}.

Let $(\lambda_n(\gamma),u(s,\gamma))$ denote the solution of (\ref{S1E7}) such that $u(0,\gamma)=\gamma$ and $u_s(0,\gamma)=0$.
In Section~5 we show that $\lambda_n(\gamma)\rightarrow\lambda^*_n$ as $\gamma\rightarrow\infty$ and that $u(s,\gamma)$ converges to $u^*(s)$ in an appropriate sense. In \cite{MP91} Merle and Peletier proved a similar convergence result for the Dirichlet problem
\[
\begin{cases}
U''+\frac{N-1}{r}U'+\lambda U+U^p=0, & 0<r<1,\\
U(1)=0,\\
U>0, & 0\le r<1.
\end{cases}
\]
when $p>p_S$.
In Theorem~\ref{S5T1} we show that $u(s,\gamma)\rightarrow u^*(s)$, following arguments in the proof of \cite[Theorem A]{MP91}.

In Section~6 we show that $\lambda_n(\gamma)$ oscillates around $\lambda^*_n$ if $p_S<p<p_{JL}$.
Let $\rho:=\gamma^{\frac{p-1}{2}}s$. We define $\tilde{u}(\rho,\gamma):=u(s,\gamma)/\gamma$ and $\tilde{u}^*(\rho):=u^*(s)/\gamma$.
We use the intersection number between $\tilde{u}$ and $\tilde{u}^*$.
The function $\tilde{u}(\rho,\gamma)$ satisfies
\begin{equation}\label{S43L1E9}
\begin{cases}
\tilde{u}''+\frac{N-1}{\rho}\tilde{u}'+\tilde{u}^p-\frac{1}{\gamma^{p-1}}\tilde{u}=0, & 0<\rho<\infty,\\
\tilde{u}(0)=1,\ \tilde{u}'(0)=0. &
\end{cases}
\end{equation}
Let $\bar{u}(\rho,\gamma)$ be the regular solution of
\begin{equation}\label{S1E7+}
\begin{cases}
\bar{u}''+\frac{N-1}{\rho}\bar{u}'+\bar{u}^p=0,\quad 0<\rho<\infty,\\
\bar{u}(0)=\gamma,\ \bar{u}'(0)=0.
\end{cases}
\end{equation}
We show that as $\gamma\rightarrow\infty$,
\[
\tilde{u}(\rho,\gamma)\rightarrow \bar{u}(\rho,1)\quad\textrm{in}\quad C^2_{loc}(0,\infty)\cap C^0_{loc}[0,\infty)
\]
and
\[
\tilde{u}^*(\rho)\rightarrow \bar{u}^*(\rho)\quad\textrm{in}\quad C^0_{loc}(0,\infty),
\]
where $\bar{u}^*(\rho)$ is given by (\ref{singent}) which is a singular solution of the equation in (\ref{S1E7+}).
In Section~2 we recall the fact that $\calZ_{(0,\infty)}[\bar{u}^*(\,\cdot\,)-\bar{u}(\,\cdot\,,1)]=\infty$. Hence, for each $\delta>0$, 
\begin{equation}\label{S1E7++}
\calZ_{(0,\delta)}[u^*(\,\cdot\,)-u(\,\cdot\,,\gamma)]\rightarrow\infty\quad (\gamma\rightarrow\infty),
\end{equation}
since $s\in (0,\delta)$ is corresponding to $\rho\in (0,\delta\gamma^{\frac{N-1}{2}})$ and $\delta\gamma^{\frac{N-1}{2}}\rightarrow\infty$ ($\gamma\rightarrow\infty$).
Since each zero of $u^*(\,\cdot\,)-u(\,\cdot\,,\gamma)$ is simple, each zero depends continuously on $\gamma$.
The divergence (\ref{S1E7++}) tells us that a zero which is simple enters the interval $(0,\sqrt{\lambda^*_n}]$ from $s=\sqrt{\lambda^*_n}$ infinitely many times.
Therefore, there exists a sequence of large numbers $\{\gamma_j\}_{j=1}^{\infty}$ ($\gamma_1<\gamma_2<\cdots\rightarrow \infty$) such that $u^*(\sqrt{\lambda^*_n})=u(\sqrt{\lambda^*_n},\gamma_j)$ and the following holds:
$u_s(\sqrt{\lambda^*_n},\gamma_j)<0$ for $j\in\{1,3,5,\cdots\}$ and $u_s(\sqrt{\lambda^*_n},\gamma_j)>0$ for $j\in\{2,4,6,\cdots\}$.
In Theorem~\ref{S6T1}, using the convergence $u(s,\gamma)\rightarrow u^*(s)$, we show that if $n\in\{1,3,5,\cdots\}$ (resp. $n\in\{2,4,6,\cdots\}$)
\begin{equation}\label{S1E8}
\lambda_n(\gamma_j)
\begin{cases}
>\lambda^*_n, & (j\in\{1,3,5,\cdots\}),\\
<\lambda^*_n, & (j\in\{2,4,6,\cdots\}),
\end{cases}
\end{equation}
\[
\left(
\textrm{resp.}\ 
\lambda_n(\gamma_j)
\begin{cases}
<\lambda_n^*, & (j\in\{1,3,5,\cdots\}),\\
>\lambda_n^*, & (j\in\{2,4,6,\cdots\}).
\end{cases}
\right)
\]
which implies that $\lambda_n(\gamma)$ oscillates around $\lambda^*_n$ infinitely many times as $\gamma\rightarrow\infty$.

In Section~7 we construct a smooth branch of boundary concentrating solutions, using a standard blow-up argument with the contraction mapping theorem.
We also show that this branch is in $\calC_1$ and that $\lambda_1(\gamma)\rightarrow\infty$ $(\gamma\downarrow 0)$.

The paper consists of 8 sections.
In Section~2 we recall known result about the intersection number and the nonexistence of the entire solution.
In Section~3 we collect fundamental properties of $\lambda_n(\gamma)$.
In Section~4 we prove Theorem~\ref{Th1}.
In Section~5 we prove the convergence to $(\lambda_n^*,U_n^*)$.
In Section~6 we prove the oscillation of $\lambda_n(\gamma)$.
In Section~7 we prove (iv) and (v) of Theorem~\ref{Th2}.
In Section~8 we prove the other assertions of Theorem~\ref{Th2} and Corollary~\ref{Cor1}.

% *****************************************************************************
% *****************************************************************************
% *****************************************************************************
% Section 2
% *****************************************************************************
% *****************************************************************************
% *****************************************************************************
\section{Preliminaries}
\subsection{Regular and singular solutions of the limit equation}
Let us consider the radial solution $\bar{u}(\rho)$ of the elliptic equation on $\R^N$
\begin{equation}\label{S21E0}
\begin{cases}
\Delta \bar{u}+\bar{u}^p=0 & \textrm{in}\ \ \R^N,\\
\bar{u}>0 & \textrm{in}\ \ \R^N.
\end{cases}
\end{equation}
It is well known that
\begin{equation}\label{singent}
\bar{u}^*(\rho):=A\rho^{-\theta}
\end{equation}
is a singular solution of (\ref{S21E0}), where $\theta$ is defined by (\ref{theta}) and $A:=A(p,N)$ is defined by (\ref{Th2E2}).
Next, we consider the regular radial solution $\bar{u}(\rho)$.
The solution $\bar{u}(\rho)$ satisfies (\ref{S1E7+}).
\begin{proposition}\label{S2P1}
Suppose that $p>p_S$. Then the problem (\ref{S1E7+}) has a unique (positive) solution $\bar{u}(\rho,\gamma)$ and $\bar{u}(\rho,\gamma)=\gamma \bar{u}(\gamma^{\frac{p-1}{2}}\rho,1)$. Moreover, if $p_S<p<p_{JL}$, then, for each $\gamma>0$, $\calZ_{(0,\infty)}[\bar{u}^*(\,\cdot\,)-\bar{u}(\,\cdot\, ,\gamma)]=\infty$.
\end{proposition}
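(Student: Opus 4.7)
The plan splits into two parts: local/global existence together with the scaling identity, and counting of intersections in the Joseph--Lundgren range.

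First I would establish existence, uniqueness, and positivity of the Cauchy problem (\ref{S1E7+}) by reformulating it as the Volterra integral equation
\[
\bar u(\rho) = \gamma - \int_0^\rho s^{1-N}\int_0^s r^{N-1}\bar u(r)^p \, dr \, ds
\]
and running a contraction argument near $\rho=0$. The energy $E(\rho) := \tfrac{1}{2}\bar u'(\rho)^2 + \tfrac{1}{p+1}\bar u(\rho)^{p+1}$ satisfies $E'(\rho) = -\tfrac{N-1}{\rho}\bar u'(\rho)^2 \le 0$, preventing blow-up, and positivity of $\bar u$ on $(0,\infty)$ for $p>p_S$ is a classical fact about entire positive radial solutions of $\Delta\bar u+\bar u^p=0$. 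The scaling identity is then verified by direct substitution: the function $v(\rho) := \gamma\,\bar u(\gamma^{(p-1)/2}\rho,1)$ satisfies the same Cauchy problem as $\bar u(\rho,\gamma)$, and uniqueness forces $v\equiv\bar u(\cdot,\gamma)$.

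Thanks to this scaling identity it suffices to prove the infinite intersection statement for $\gamma=1$. Here I would apply the Emden--Fowler change of variables $w(t) := e^{\theta t}\bar u(e^t,1)$, which transforms the equation in (\ref{S1E7+}) into the autonomous second-order ODE
\[
\ddot w + (N - 2 - 2\theta)\,\dot w - A^{p-1}w + w^p = 0,
\]
using the identity $A^{p-1} = \theta(N-2-\theta)$. The singular solution $\bar u^*(\rho)=A\rho^{-\theta}$ corresponds to the equilibrium $w\equiv A$, and zeros of $\bar u^*(\rho)-\bar u(\rho,1)$ on $(0,\infty)$ correspond bijectively (via $t=\log\rho$) to zeros of $w(t)-A$ on $\R$.

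Linearizing at $w=A$ gives the characteristic equation $\mu^2 + (N-2-2\theta)\mu + (p-1)A^{p-1} = 0$. The coefficient $N-2-2\theta$ is positive iff $p>p_S$, and the discriminant is negative iff $p<p_{JL}$ (this is precisely the Joseph--Lundgren computation that defines $p_{JL}$). Hence in the range $p_S<p<p_{JL}$ the equilibrium $w=A$ is an asymptotically stable focus in the $(w,\dot w)$-phase plane. Combined with the classical asymptotic $\bar u(\rho,1)\sim A\rho^{-\theta}$ as $\rho\to\infty$ for positive entire solutions in the supercritical regime (Gidas--Spruck, Joseph--Lundgren), this yields $w(t)\to A$ as $t\to\infty$, so that the trajectory spirals into the focus and $w(t)-A$ changes sign infinitely often, giving $\calZ_{(0,\infty)}[\bar u^*(\cdot)-\bar u(\cdot,1)]=\infty$.

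The main obstacle I foresee is the spiraling step itself: one must exclude the possibility that $w(t)$ approaches $A$ along a one-dimensional invariant curve without oscillating. Since the linearization at $A$ has complex eigenvalues with negative real part, no such real eigendirection exists, and a Poincar\'e--Lyapunov normal form (or direct comparison with the linear spiral obtained by freezing the nonlinearity) shows that every trajectory entering a sufficiently small neighborhood of $(A,0)$ rotates infinitely many times around it. Producing an explicit Lyapunov function for the planar system $(w,\dot w)$ and combining it with the energy monotonicity of $E$ would guarantee that the trajectory eventually enters that neighborhood. Matching this phase-plane oscillation with the intersection count of $\bar u^*-\bar u(\cdot,1)$ is the technical crux of the proposition.
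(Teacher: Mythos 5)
Your proposal is correct and takes essentially the same route as the paper: the Emden--Fowler change of variables turns (\ref{S1E7+}) into an autonomous planar system in which the singular solution $\bar u^*$ becomes the equilibrium, the discriminant of the linearization is negative exactly when $p_S<p<p_{JL}$ (so the equilibrium is a stable spiral), and convergence of the regular orbit to that equilibrium --- which you get either from the classical slow-decay asymptotics or from the Lyapunov function you sketch, which is precisely the paper's $E(y,z)=\tfrac12 z^2-\tfrac12 y^2+\tfrac{1}{p+1}y^{p+1}$ with $\tfrac{d}{dt}E=-\alpha z^2\le 0$ --- forces $w(t)-A$ to change sign infinitely often, i.e.\ $\calZ_{(0,\infty)}[\bar u^*(\,\cdot\,)-\bar u(\,\cdot\,,1)]=\infty$, with the scaling identity handling general $\gamma$. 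The only real difference is organizational: the paper obtains existence, positivity, and the convergence to the equilibrium all at once by identifying the regular solution with the heteroclinic orbit emanating from the saddle at the origin, whereas you assemble these ingredients separately from standard ODE facts.
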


\begin{proof}
We define $\bar{v}(\rho):=A^{-1}\rho^{\theta}\bar{u}(\rho)$.
The function $\bar{v}(\rho)$ satisfies
\[
\begin{cases}
\bar{v}''+\frac{1}{\rho}\left( N-1-2\theta\right)\bar{v}'+\frac{A^{p-1}}{\rho^2}(\bar{v}^p-\bar{v})=0, & 0<\rho<\infty ,\\
r^{-\theta}\bar{v}(\rho)\rightarrow\frac{\gamma}{A}\quad (\rho\downarrow 0), \\
\bar{v}>0, & 0\le \rho<\infty.\\
\end{cases}
\]
In order to make the equation autonomous we change variables to $t:=A^{\frac{p-1}{2}}\log\rho$ and $y(t):=\bar{v}(\rho)$.
The function $y(t)$ satisfies
\begin{equation}\label{S21Eq2}
\begin{cases}
y''+\alpha y'-y+y^p=0, & -\infty<t<\infty,\\
e^{-\theta mt}y(t)\rightarrow\frac{\gamma}{A}\quad (t\rightarrow -\infty), & \\
y>0, & -\infty<t<\infty,\\
\end{cases}
\end{equation}
where the prime stands for the derivative,
\[
m:=A^{-\frac{p-1}{2}},\quad\textrm{and}\quad\alpha:=m(N-2-2\theta).
\]
The equation (\ref{S21Eq2}) is transformed into the first order autonomous system
\begin{equation}\label{S21E2+}
\begin{cases}
y'=z &\\
z'=-\alpha z+y-y^p. &
\end{cases}
\end{equation}
This system has two equilibrium points $(0,0)$ and $(1,0)$. By simple calculation we see that $(0,0)$ is a saddle point.
The eigenvalues of the linearized operator at $(0,0)$ are the roots of the quadratic equation $\Lambda^2+\alpha\Lambda -1=0$.
The eigenvalues are $m\theta(>0)$ and $-m(N-2-\theta)(<0)$.
On the other hand, the eigenvalues of the linearized operator at $(1,0)$ are the roots of $\Lambda^2+\alpha\Lambda +p-1=0$.
Therefore, $(1,0)$ is a spiral point if and only if
\[
\alpha^2-4(p-1)<0,\textrm{ i.e., }(N-2-2\theta)^2-8(N-2-\theta)<0.
\]
Then,
\begin{equation}\label{S2Eq22+}
(N-4-2\sqrt{N-1})/2<\theta<(N-4+2\sqrt{N-1})/2.
\end{equation}
We easily see that $1+4/(N-4+2\sqrt{N-1})<p_S(<p)$.
Thus, $\theta<(N-4+2\sqrt{N-1})/2$ always holds.
If $N\le 10$, then $(N-4-2\sqrt{N-1})/2\le 0(<\theta)$, hence (\ref{S2Eq22+}) holds. When $N\ge 11$, (\ref{S2Eq22+}) holds if
\[
p<1+\frac{4}{N-4-2\sqrt{N-1}}(=p_{JL}).
\]
If $p\ge p_{JL}$, then two eigenvalues are negative.
Thus, we have the following:
\[
\begin{cases}
\textrm{If $N\le 10$, then $(1,0)$ is a spiral point for $p>p_S$.} &\\
\textrm{If $N\ge 11$, then $(1,0)$ is a} 
\begin{cases}
\textrm{spiral point for $p_S<p<p_{JL}$.} &\\
\textrm{(resp. degenerate) node for}\\
\textrm{(resp. $p=p_{JL}$) $p>p_{JL}.$} &
\end{cases}
&
\end{cases}
\]
Let
\begin{equation}\label{S2E22++}
E(y,z):=\frac{1}{2}z^2-\frac{1}{2}y^2+\frac{1}{p+1}y^{p+1}.
\end{equation}
By direct calculation we have $\frac{d}{dt}E(y(t),z(t))=-\alpha z^2(t)$.
If $p>p_S$, then $\alpha>0$ and $E$ is a Lyapunov function.
A phase plane analysis with this Lyapunov function reveals that there is a heteroclinic orbit from $(0,0)$ to $(1,0)$ which is in the right half-plane.
Note that the eigenvalue $\theta m$ is compatible with the limit condition in (\ref{S21Eq2}), because $y(t)\sim C_0e^{\theta mt}$ $(t\rightarrow -\infty)$.
This orbit corresponds to a regular solution $\bar{u}(\rho)$ of (\ref{S1E7+}) for some $\gamma$, and $\bar{u}$ is positive.
Since $\bar{u}(\rho)$ is a solution, $\gamma\bar{u}(\gamma^{\frac{p-1}{2}}\rho)$ is also a solution.
Without loss of generality, we assume that $\bar{u}(0)=1$.
Every solution of (\ref{S1E7+}) can be written as $\gamma\bar{u}(\gamma^{\frac{p-1}{2}}\rho)$. 
Since the equilibrium point $(1,0)$ corresponds to the singular solution $\bar{u}^*(\rho)$ of (\ref{S1E7+}) and $(1,0)$ is a spiral point for $p_S<p<p_{JL}$, the intersection number of $\bar{u}(\rho,\gamma)$ and $\bar{u}^*(\rho)$ is infinite for each $\gamma$, i.e., $\calZ_{(0,\infty)}[\bar{u}^*(\,\cdot\,)-\bar{u}(\,\cdot\, ,\gamma)]=\infty$.
\end{proof}
%We have
%In Section 4 we use Proposition~\ref{S2P1} in order to prove (\ref{S1E7++}).

\subsection{Nonexistence of the entire solution}
\begin{proposition}\label{S22P1}
If $u(s)\in C^2(\e,R)\cap C^0[\e,R]$ satisfies
\[
u''+\frac{N-1}{s}u'+f(u)=0\quad (\e<s<R)
\]
and $u(s)>0$ $(\e<s<R)$, then
\begin{multline}\label{S22P1E1}
0=\left(\frac{N}{2}-1-\frac{N}{p+1}\right)\int_{\e}^Rs^{N-1}(u')^2ds+\left(\frac{N}{2}-\frac{N}{p+1}\right)\int_{\e}^Rs^{N-1}u^2ds\\
+\frac{N}{p+1}\left( R^{N-1}u(R)u'(R)-\e^{N-1}u(\e)u'(\e)\right)
+\frac{1}{2}\left(R^Nu'(R)^2-\e^Nu'(\e)^2\right)\\
-\frac{1}{2}\left(R^Nu(R)^2-\e^Nu(\e)^2\right)
+\frac{1}{p+1}\left(R^Nu(R)^{p+1}-\e^Nu(\e)^{p+1}\right).
\end{multline}
\end{proposition}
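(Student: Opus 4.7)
The plan is to derive this as a standard Pohozaev-type identity, obtained by multiplying the ODE by a well-chosen test function and integrating by parts on $[\varepsilon, R]$. The identity has the structure of "Pohozaev multiplier plus lower-order corrector," so I would use the combination
\[
s^N u' + \frac{N}{p+1}\, s^{N-1} u
\]
as the multiplier against the equation $u'' + \frac{N-1}{s} u' + f(u) = 0$, and then integrate on $(\varepsilon, R)$.

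First I would handle the pure Pohozaev piece by multiplying the equation by $s^N u'$. The terms $s^N u' u''$ and $s^N u' f(u)$ rewrite as $\frac{d}{ds}\bigl(\tfrac{1}{2} s^N (u')^2\bigr) - \tfrac{N}{2} s^{N-1} (u')^2$ and $\frac{d}{ds}\bigl(s^N F(u)\bigr) - N s^{N-1} F(u)$ respectively, where $F(u) := \int_0^u f(\xi)\,d\xi = -\tfrac{1}{2}u^2 + \tfrac{1}{p+1}u^{p+1}$. Combining with the $(N-1) s^{N-1}(u')^2$ term coming from $\frac{N-1}{s} u' \cdot s^N u'$ and integrating from $\varepsilon$ to $R$ yields the first "raw" identity
\[
\left[\tfrac{1}{2}s^N (u')^2 + s^N F(u)\right]_{\varepsilon}^{R} + \left(\tfrac{N}{2}-1\right)\!\int_{\varepsilon}^{R}\! s^{N-1}(u')^2\,ds - N\!\int_{\varepsilon}^{R}\! s^{N-1} F(u)\,ds = 0.
\]
Next I would do the "multiply by $s^{N-1} u$" identity: writing $s^{N-1} u u'' + (N-1) s^{N-2} u u' = \frac{d}{ds}(s^{N-1} u u') - s^{N-1}(u')^2$ gives
\[
\bigl[s^{N-1} u u'\bigr]_{\varepsilon}^{R} - \int_{\varepsilon}^{R} s^{N-1}(u')^2\,ds + \int_{\varepsilon}^{R} s^{N-1} u f(u)\,ds = 0.
\]

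The final step is to form the linear combination (Pohozaev identity) $+ \frac{N}{p+1}$ (energy identity). The integrand of the $F(u)$–integral becomes $N F(u) - \frac{N}{p+1} u f(u)$, and a short computation using $u f(u) = -u^2 + u^{p+1}$ shows this collapses to $-\bigl(\tfrac{N}{2} - \tfrac{N}{p+1}\bigr) u^2$, so the $u^{p+1}$ contributions from the interior integrals cancel exactly, leaving only the $u^2$ interior term with the stated coefficient. Expanding $s^N F(u) = -\tfrac{1}{2} s^N u^2 + \tfrac{1}{p+1} s^N u^{p+1}$ in the boundary contribution then reproduces verbatim the five endpoint terms of (\ref{S22P1E1}).

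There is no real obstacle here; the statement is a bookkeeping identity valid on any compact interval $[\varepsilon, R] \subset (0, \infty)$, so no decay or positivity assumption beyond what is given is used, and the hypothesis $u \in C^2(\varepsilon, R) \cap C^0[\varepsilon, R]$ is enough to justify the integrations by parts (the boundary values $u'(\varepsilon), u'(R)$ appear as free data in the identity and need not be finite unless required separately). The only place where one must be a little careful is verifying the algebraic simplification $N F(u) - \tfrac{N}{p+1} u f(u) = -\bigl(\tfrac{N}{2} - \tfrac{N}{p+1}\bigr) u^2$, which is precisely the reason the coefficient $\tfrac{N}{p+1}$ is the correct Pohozaev weight for the pure-power term.
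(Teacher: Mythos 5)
Your computation is correct and is exactly the standard Pohozaev-multiplier argument; the paper itself omits the proof and refers to \cite{NY88} and \cite{DN85}, which proceed in the same way, so your proposal simply supplies the omitted standard derivation. One small correction: the endpoint values $u'(\e)$ and $u'(R)$ are not ``free data that need not be finite''---they exist automatically, since $u\in C^0[\e,R]$ makes $f(u)$ bounded, hence $(s^{N-1}u')'=-s^{N-1}f(u)$ is bounded and $s^{N-1}u'$ extends continuously to $[\e,R]$, and this is what legitimizes evaluating the boundary terms in your integrations by parts.
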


This is the integral form of the Pohozaev identity.
We omit the proof.
See \cite[Proposition~4.3]{NY88} or \cite[Lemma~3.7]{DN85} for detail.

\begin{proposition}\label{S22P2}
Suppose that $p\ge p_S$. The problem
\[
\begin{cases}
u''+\frac{N-1}{s}u'+f(u)=0, & 0<s<\infty, \\
u'(0)=0,\ u(0)<\infty,\ u(\infty)=0,\\
u>0, & 0\le s<\infty
\end{cases}
\]
does not admit a solution.
\end{proposition}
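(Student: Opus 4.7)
The plan is to apply the Pohozaev-type identity of Proposition~\ref{S22P1} on the interval $[\e,R]$, let $\e\downarrow 0$ and $R\to\infty$, and obtain an algebraic identity for two positive integrals whose coefficients have a favourable sign when $p\ge p_S$.

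First I would handle the inner boundary. By the symmetry condition $u'(0)=0$ and the finiteness $u(0)<\infty$, together with $u\in C^2(0,\infty)\cap C^0[0,\infty)$ and $N\ge 3$, each of the four terms $\e^{N-1}u(\e)u'(\e)$, $\e^N u'(\e)^2$, $\e^N u(\e)^2$, $\e^N u(\e)^{p+1}$ is $O(\e^{N-1})$ and vanishes as $\e\downarrow 0$.

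The main step is exponential decay at infinity. Since $u(s)\to 0$, there is $s_0>0$ with $u(s)^{p-1}\le 1/2$ for $s\ge s_0$, so the equation yields
\[
u''+\frac{N-1}{s}u'-\tfrac{1}{2}u\ge 0\quad (s\ge s_0).
\]
Comparing with the positive decaying solution of the linear ODE $v''+\frac{N-1}{s}v'-\tfrac{1}{2}v=0$ (whose leading behaviour is $s^{-(N-1)/2}e^{-s/\sqrt{2}}$) via the maximum principle for the operator $L=\partial_s^2+\frac{N-1}{s}\partial_s-\tfrac{1}{2}$ gives $u(s)\le Ce^{-cs}$ for some $c>0$. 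Substituting this back into the ODE, or using the substitution $w=s^{(N-1)/2}u$ which turns the equation into one with nearly constant coefficients, yields the same exponential decay for $u'$. In particular the two integrals in (\ref{S22P1E1}) converge and all four boundary terms at $s=R$ vanish along $R\to\infty$.

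Passing to the limit in (\ref{S22P1E1}) leaves
\[
0=\left(\frac{N-2}{2}-\frac{N}{p+1}\right)\int_0^\infty s^{N-1}(u')^2\,ds+\left(\frac{N}{2}-\frac{N}{p+1}\right)\int_0^\infty s^{N-1}u^2\,ds.
\]
The first coefficient equals $((N-2)p-(N+2))/(2(p+1))$, which is non-negative precisely when $p\ge p_S$, while the second equals $N(p-1)/(2(p+1))>0$. Therefore $\int_0^\infty s^{N-1}u^2\,ds=0$, forcing $u\equiv 0$ and contradicting $u>0$. The only genuinely non-routine point is making the exponential decay rigorous; the rest is bookkeeping in Proposition~\ref{S22P1}.
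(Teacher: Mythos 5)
Your proof is correct and is essentially the argument the paper intends: the paper omits the proof of Proposition~\ref{S22P2}, citing Ni's Theorem~1.6, but that proof (and the paper's own proof of the analogous nonexistence result for the singular solution, Lemma~\ref{S42+L2}) is exactly the Pohozaev identity of Proposition~\ref{S22P1} combined with exponential decay of $u$ and $u'$ at infinity stemming from $f'(0)=-1<0$. Your sign bookkeeping for the two coefficients and the vanishing of the boundary terms at $s=\e$ and $s=R$ matches that standard argument.
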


This proposition was obtained in \cite[Theorem~1.6]{N83}.
We omit the proof.

% *****************************************************************************
% *****************************************************************************
% *****************************************************************************
% Section 3
% *****************************************************************************
% *****************************************************************************
% *****************************************************************************
\section{Local and global parametrization results}
In this section we study the parametrization of each branch $\calC_n$ of the solution of (\ref{N}).
Let $U(r)$ be a regular solution of (\ref{N}).
We sometimes use the stretched variable $s:=\sqrt{\lambda}r(=r/\e)$.
We define $u(s):=U(r)$.
Then $u(s)$ satisfies (\ref{S1E7}).
We denote the solution of the initial value problem (\ref{S3E2}) by $u(s,\gamma)$.
\subsection{Local parametrization result}
To begin with, we study the local parametrization of the branch.
\begin{proposition}[{{\cite[Proposition~3.1]{MY13}}}]\label{LPR}
Let $(\lambda_0,U_0)$ be a nonconstant regular solution of (\ref{N}), and let $\gamma_0:=U_0(0)$.
Then, all solutions near $(\lambda_0,U_0)$ can be parametrized as $\{(\lambda(\gamma),U(r,\gamma))\}_{|\gamma-\gamma_0|<\e}$ ($U(0,\gamma)=\gamma$, $\lambda(\gamma_0)=\lambda_0$, $U(r,\gamma_0)=U_0(r)$).
\end{proposition}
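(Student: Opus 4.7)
The plan is to apply the implicit function theorem to the shooting map associated with the initial value problem~(\ref{S3E2}). Let $u(s,\gamma)$ denote its unique solution; standard ODE theory (treating the origin as a regular singular point via the usual fixed-point reformulation) gives $u$ and $u_s$ jointly $C^1$ in $(s,\gamma)$ on any interval where $u$ remains bounded. A regular radial solution $(\lambda,U)$ of (\ref{N}) with $U(0)=\gamma$ is equivalent to a pair $(s,\gamma)$, $s:=\sqrt{\lambda}$, satisfying $u_s(s,\gamma)=0$ and $u(\,\cdot\,,\gamma)>0$ on $[0,s]$; the identification is $U(r)=u(sr,\gamma)$.

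Set $s_0:=\sqrt{\lambda_0}$ and $\Psi(s,\gamma):=u_s(s,\gamma)$, so $\Psi(s_0,\gamma_0)=0$. To apply the implicit function theorem and solve $\Psi=0$ locally in the form $s=s(\gamma)$, it suffices to check the transversality condition $\partial_s\Psi(s_0,\gamma_0)=u_{ss}(s_0,\gamma_0)\neq 0$. From the ODE and $u_s(s_0,\gamma_0)=0$,
\[
u_{ss}(s_0,\gamma_0)=-f(u(s_0,\gamma_0))=-f(U_0(1)),
\]
and since $f(U)=-U+U^p$ has no zero on $\R_+$ other than $U=1$, the question reduces to showing $U_0(1)\neq 1$.

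The key step is this last inequality. Suppose, for contradiction, $U_0(1)=1$; combined with the Neumann condition $U_0'(1)=0$, the function $U_0$ and the constant $\equiv 1$ share Cauchy data at $r=1$ for the second order ODE in (\ref{N}), which is regular away from $r=0$. ODE uniqueness on each interval $[\e,1]$ with $\e>0$, together with continuity at the origin, forces $U_0\equiv 1$, contradicting nonconstancy. Hence $u_{ss}(s_0,\gamma_0)\neq 0$ and the implicit function theorem yields a $C^1$ function $s=s(\gamma)$ on a neighbourhood of $\gamma_0$ with $s(\gamma_0)=s_0$ and $u_s(s(\gamma),\gamma)=0$. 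Setting $\lambda(\gamma):=s(\gamma)^2$ and $U(r,\gamma):=u(s(\gamma)r,\gamma)$ gives the parametrization; positivity of $U(\,\cdot\,,\gamma)$ on $[0,1]$ and the fact that every solution near $(\lambda_0,U_0)$ is captured follow from continuous dependence of $u$ and the uniqueness clause of the implicit function theorem.

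The only real technical point is the exclusion of $U_0(1)=1$, and as sketched this is immediate from ODE uniqueness once the equation is viewed as a Cauchy problem at $r=1$; the remainder is routine bookkeeping for the implicit function theorem.
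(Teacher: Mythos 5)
Your argument is correct and is essentially the paper's proof: both reduce to the shooting condition $u_s(\sqrt{\lambda},\gamma)=0$ and apply the implicit function theorem, the crux being the transversality $u_{ss}(\sqrt{\lambda_0},\gamma_0)\neq 0$. The only minor difference is how that nondegeneracy is checked: you evaluate the equation at $s_0=\sqrt{\lambda_0}$ to get $u_{ss}(s_0,\gamma_0)=-f(U_0(1))$ and exclude $U_0(1)=1$ by uniqueness of the Cauchy problem at $r=1$ against the constant solution, whereas the paper notes that $u_s(s_0)=u_{ss}(s_0)=0$ would force $u_s\equiv 0$ via uniqueness for the differentiated (linear) equation --- the same ODE-uniqueness idea in a slightly different guise.
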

This proposition was obtained in \cite{MY13}.
We prove this proposition for readers' convenience.\\

\noindent
\begin{proof}[Proof of Proposition~\ref{LPR}]
Let $s:=\sqrt{\lambda}r$ and $u(s):=U(r)$.
Then, $u$ is not constant and $u$ satisfies (\ref{S1E7}).
We consider the initial value problem (\ref{S3E2}) and denote the solution by $u(s,\gamma)$.
Then, $u$ is a $C^2$-function of $(s,\gamma)$ and $U(r,\lambda,\gamma)(=u(\sqrt{\lambda}r,\gamma))$ satisfies (\ref{N}).
Since $U_r(1,\lambda,\gamma)=0$, $\sqrt{\lambda}u_s(\sqrt{\lambda},\gamma)=0$. We will show that $\frac{\partial}{\partial\lambda}u_s(\sqrt{\lambda},\gamma)\neq 0$.
Since $\frac{\partial}{\partial\lambda}u_s(\sqrt{\lambda},\gamma)=u_{ss}(\sqrt{\lambda},\gamma)/(2\sqrt{\lambda})$, it is enough to show that $u_{ss}(\sqrt{\lambda},\gamma)\neq 0$.
Suppose the contrary, i.e., $u_{ss}(\sqrt{\lambda},\gamma)=0$.
Differentiating (\ref{S3E2}) with respect to $s$, we have
\[
u'''+\frac{N-1}{s}u''+\left( f'(u)-\frac{N-1}{s^2}\right)u'=0.
\]
Since $u_s(\sqrt{\lambda},\gamma)=u_{ss}(\sqrt{\lambda},\gamma)=0$, we see by the uniqueness of the solution of the ODE that $u_s(s,\gamma)\equiv 0$.
Thus, $u$ is a constant solution of (\ref{S3E2}) which contradicts that $U$ is not constant.
Hence, $u_{ss}(\sqrt{\lambda},\gamma)\neq 0$.
We can apply the implicit function theorem to $u_s(\sqrt{\lambda},\gamma)=0$.
We see that there is a $C^1$-function $\lambda=\lambda(\gamma)$, which is defined in a neighborhood of $\gamma_0$, such that $u_s(\sqrt{\lambda(\gamma)},\gamma)=0$ and that all the solutions near $(\lambda_0,U_0)$ can be written as $(\lambda(\gamma),u(\sqrt{\lambda(\gamma)}r,\gamma))$ $(|\gamma-\gamma_0|<\e)$.
The positivity $u(\sqrt{\lambda(\gamma)}r,\gamma)>0$ $(0\le r\le 1)$ follows from the maximum principle.
\end{proof}

\begin{remark}
It follows from Proposition~\ref{LPR} that another radial branch does not emanate from $\calC_n$. However, the branch may have a turning point.
\end{remark}

\subsection{Local bifurcation from the trivial branch}
We work on the space of radial functions.
Let $\phi_n$ be the eigenfunction of the linearization of (\ref{N}) at $(\bar{\lambda}_n,1)$.
Since
\[
\int_0^1\phi_n\left.\frac{\partial^2}{\partial U\partial\lambda}(\lambda f(U))\right|_{(\lambda,U)=(\bar{\lambda}_n,1)}\phi_n r^{N-1}dr=(p-1)\int_0^1\phi_n^2 r^{N-1}dr\neq 0,
\]
the transversality condition of Crandall-Rabinowitz type holds, hence $(\bar{\lambda}_n,1)$ is a bifurcation point.

\subsection{Extension to $1<\gamma<\infty$}
For each $n\ge 1$, the branch $\calC_n$ emanates from $(\bar{\lambda}_n,1)$ and it is locally parametrized by $\gamma=U_n(0)$, hence $\calC_n=\{(\lambda_n(\gamma),U_n(r,\gamma))\}$ $(\gamma=U_n(0,\gamma))$.
In this subsection we extend the branch in the direction $\gamma\rightarrow\infty$.
In particular, we show that, for each $\gamma^*>1$, $\lambda_n(\gamma)$ does not diverge as $\gamma\uparrow\gamma^*$.
\begin{lemma}\label{BDDB}
For each $\gamma^*>1$, there are $C>0$ and $\e>0$ such that $|\lambda_n(\gamma)|<C$ for $\gamma\in(\gamma^*-\e,\gamma^*)$.
\end{lemma}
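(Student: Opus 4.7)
The plan is to argue by contradiction: suppose there is a sequence $\gamma_k \uparrow \gamma^*$ along which $\lambda_n(\gamma_k) \to \infty$. Write $s_k := \sqrt{\lambda_n(\gamma_k)}$ and $u_k(s) := u(s, \gamma_k)$ for the solution of (\ref{S3E2}); then $u_k'(0) = u_k'(s_k) = 0$, $u_k > 0$ on $[0, s_k]$, $s_k \to \infty$, and $\calZ_{[0, s_k]}[u_k(\cdot) - 1] = n$. The energy $E_k(s) := \tfrac{1}{2} u_k'(s)^2 + F(u_k(s))$, with $F(u) := -\tfrac{u^2}{2} + \tfrac{u^{p+1}}{p+1}$, satisfies $E_k'(s) = -\tfrac{N-1}{s} u_k'(s)^2 \le 0$; since $F \ge F(1)$ and $E_k(0) = F(\gamma_k) \to F(\gamma^*)$, this yields uniform a priori bounds on $u_k$ and $u_k'$ on $[0, s_k]$ depending only on $\gamma^*$. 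By Arzel\`a--Ascoli and continuous dependence on initial data, a subsequence of $u_k$ converges in $C^2_{\mathrm{loc}}([0, \infty))$ to some $u_\infty$ solving the ODE on $[0,\infty)$ with $u_\infty(0) = \gamma^*$, $u_\infty'(0) = 0$.

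Two elementary observations follow. First, ODE uniqueness forces $u_\infty > 0$ on $[0, \infty)$: any interior zero of $u_\infty$ would be a minimum (since $u_\infty \ge 0$), hence a double zero, so $u_\infty \equiv 0$, contradicting $u_\infty(0) = \gamma^* > 1$. Second, since each zero of $u_k - 1$ is simple and depends continuously on $k$, and the count is exactly $n$ on $[0, s_k]$, the limit $u_\infty - 1$ has at most $n$ zeros on $[0, \infty)$; in particular, $u_\infty$ is eventually on one side of $1$.

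The heart of the argument is to show $u_\infty(s) \to 0$ as $s \to \infty$. The energy $E_\infty(s) := \tfrac{1}{2} u_\infty'(s)^2 + F(u_\infty(s))$ is nonincreasing and bounded below, so $E_\infty(s) \downarrow E_\infty^* \in [F(1), F(\gamma^*)]$. Because the damping $\tfrac{N-1}{s} u_\infty'$ vanishes at infinity, the equation is asymptotically autonomous, and the $\omega$-limit set of $(u_\infty(s), u_\infty'(s))$ is a compact subset of the level set $\{\tfrac{1}{2} z^2 + F(y) = E_\infty^*\}$ invariant under the autonomous flow $u'' + f(u) = 0$. The phase portrait on $\{u > 0\}$ has the saddle $(0, 0)$ and the center $(1, 0)$, with closed orbits around $(1, 0)$ for $F(1) < E_\infty^* < 0$, a homoclinic loop to $(0, 0)$ at $E_\infty^* = 0$ that reaches $u = \zeta := ((p+1)/2)^{1/(p-1)} > 1$, and open orbits crossing $\{u = 0\}$ for $E_\infty^* > 0$. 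The last possibility is incompatible with $u_\infty > 0$; both closed orbits and the homoclinic force $u_\infty$ to cross $1$ infinitely often, contradicting the finite-zero bound; and $E_\infty^* = F(1)$ with $u_\infty \to 1$ is excluded by the Bessel-type oscillation $v \sim s^{-(N-1)/2} \cos(\sqrt{p-1}\, s + \phi)$ of solutions of the linearization $v'' + \tfrac{N-1}{s} v' + (p-1) v = 0$. Therefore the orbit must tend to the saddle along its stable manifold, i.e., $u_\infty(s) \to 0$.

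Then $u_\infty$ is a positive radial $C^2$ entire solution of $\Delta u - u + u^p = 0$ on $\R^N$ with $u_\infty'(0) = 0$ and $u_\infty(\infty) = 0$, contradicting Proposition~\ref{S22P2} for $p \ge p_S$. The main obstacle is precisely the asymptotic analysis establishing $u_\infty \to 0$; the finite-zero count inherited from $\calZ_{[0, s_k]}[u_k - 1] = n$ is what eliminates every oscillatory asymptotic regime and forces the orbit onto the stable manifold of the saddle.
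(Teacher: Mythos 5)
Your argument is correct in substance, but it is organized quite differently from the paper's proof, and it is worth seeing what each buys. The paper also argues by contradiction and ultimately invokes the same nonexistence result (Proposition~\ref{S22P2}), but only for $n=1$: there it exploits that $\sqrt{\lambda_1(\gamma_j)}$ is the \emph{first} critical point, so each $u_j$ is decreasing on $(0,\sqrt{\lambda_1(\gamma_j)})$, the limit is a positive decreasing solution, its limit at infinity is $0$ or $1$, and the value $1$ is ruled out by the Sturm oscillation argument; no phase-plane limit-set analysis is needed. For $n\ge 2$ the paper does not pass to a blow-up limit at all: having defined $\lambda_1(\gamma^*)$, it notes that the energy $E$ of (\ref{S2E22++}) is negative at the first critical point of $u(\cdot,\gamma^*)$, hence $u(\cdot,\gamma^*)$ stays bounded away from $0$, Sturm's theorem then gives infinitely many critical points, so the $n$-th critical point $\sqrt{\lambda_n(\gamma^*)}$ is finite, and the bound near $\gamma^*$ follows from the continuity supplied by Proposition~\ref{LPR}. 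Your route instead treats all $n$ at once: you replace the monotonicity used for $n=1$ by the preserved intersection number $\calZ_{[0,s_k]}[u_k-1]=n$ along $\calC_n$, and you force $u_\infty(\infty)=0$ by an asymptotically autonomous $\omega$-limit-set analysis in the $(u,u')$ phase plane, the finite zero count excluding the periodic, homoclinic and $u_\infty\to 1$ regimes. This is a legitimate and uniform argument, somewhat heavier than the paper's $n\ge2$ step; two compressed points should be made explicit if you write it up: the invariance of the $\omega$-limit set under the limiting autonomous flow requires a Markus--Thieme type theorem (or a direct argument using the monotone energy), and the exclusion of the case $E_\infty^*>0$ needs the observation that the level arc $\{\tfrac12 z^2+F(y)=E_\infty^*\}\cap\{y\ge 0\}$ contains no equilibria and every full orbit through it leaves $\{y\ge 0\}$ in finite time, so it admits no nonempty invariant subset compatible with $u_\infty>0$. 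Also, the case $u_\infty\to 1$ is excluded by applying Sturm's oscillation theorem to $w=u_\infty-1$ itself (whose coefficient tends to $p-1>0$), exactly as in the paper, rather than to the formal linearization; with these details filled in, your proof is complete.
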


\begin{proof}
First, we prove the case $n=1$.
We use a contradiction argument.
Suppose the contrary, i.e., 
\begin{multline}\label{BDDBE0}
\textrm{there is a sequence $\{\gamma_j\}_{j=1}^{\infty}$ 
$(\gamma_j<\gamma^*,\ \gamma_j\uparrow\gamma^*)$}\\
\textrm{such that $\lambda_1(\gamma_j)\rightarrow\infty$ $(j\rightarrow\infty)$.}
\end{multline}
Let $u(s,\gamma_j)$ denote the solution of (\ref{S3E2}), and let $u_j(s):=u(s,\gamma_j)$.
Then, $u_j(0)=\gamma_j$.
We show that there exists a solution $u_*(s)\in C^2(0,\infty)\cap C^0[0,\infty)$ of the problem
\begin{equation}\label{BDDBE1}
\begin{cases}
u''+\frac{N-1}{s}u'+f(u)=0, & 0<s<\infty,\\
u'(0)=0,\ u(0)=\gamma^*,\ u(\infty)=0, & \\
u>0,\ u'<0, & 0<s<\infty.
\end{cases}
\end{equation}
Since the problem (\ref{S3E2}) is well-posed, there exists $\tilde{u}_*(s)\in C^2(0,\infty)\cap C^0[0,\infty)$ such that $\tilde{u}_*$ satisfies
\[
\begin{cases}
(\tilde{u}_*)''+\frac{N-1}{s}(\tilde{u}_*)'+f(\tilde{u}_*)=0, & 0<s<\infty,\\
(\tilde{u}_*)'(0)=0,\ \tilde{u}_*(0)=\gamma^*, &
\end{cases}
\]
and $u_j(s)\rightarrow\tilde{u}_*(s)$ in $C^2_{loc}(0,\infty)\cap C^0_{loc}[0,\infty)$ as $j\rightarrow\infty$.
Therefore, in order to show that $\tilde{u}_*(s)\equiv u_*(s)$ we show that $\tilde{u}_*$ satisfies
\begin{equation}\label{BDDBE2}
\begin{cases}
\tilde{u}_*(\infty)=0, & \\
\tilde{u}_*>0,\ (\tilde{u}_*)'<0, & 0<s<\infty.
\end{cases}
\end{equation}
If there is $\tilde{s}_0>0$ such that $\tilde{u}_*(\tilde{s}_0)<0$, then, for large $j\ge 1$, $u_j(\tilde{s}_0)<0$, since $u_j\rightarrow\tilde{u}_*$ in $C^0_{loc}[0,\infty)$.
We obtain a contradiction, because $\tilde{s}_0<\sqrt{\lambda_1(\gamma_j)}$ for large $j\ge 1$ and $u_j(s)>0$ $(0<s<\sqrt{\lambda_1(\gamma_j)})$ for every $j\ge 1$.
Thus $\tilde{u}_*(s)\ge 0$ $(0\le s<\infty)$.
By the maximum principle we see that $\tilde{u}_*(s)>0$ $(0\le s<\infty)$.
If there is $\tilde{s}_1>0$ such that $(\tilde{u}_*)'(\tilde{s}_1)>0$, then, for large $j\ge 1$, $u'_j(\tilde{s}_1)>0$.
We obtain a contradiction, because $\tilde{s}_1<\sqrt{\lambda_1(\gamma_j)}$ for large $j\ge 1$ and $u'_j(s)<0$ $(0<s<\sqrt{\lambda_1(\gamma_j)})$. Thus, $(\tilde{u}_*)'(s)\le 0$ $(0<s<\infty)$.
Let $v:=(\tilde{u}_*)'$. If there is $\tilde{s}_2>0$ such that $v(\tilde{s}_2)=0$, then $v'(\tilde{s}_2)=0$, since $v(s)\le 0$ ($0<s<\infty$). We obtain a contradiction, because $v$ satisfies
\[
v''+\frac{N-1}{s}v'+\left( f'(\tilde{u}_*)-\frac{N-1}{s^2}\right)v=0\quad (0<s<\infty)
\]
and Hopf's lemma says that $v'(\tilde{s}_2)\neq 0$. Therefore, $v(s)<0$ $(0<s<\infty)$ and $(\tilde{u}_*)'(s)<0$ $(0<s<\infty)$.
Since $\tilde{u}_*>0$ and $(\tilde{u}_*)'<0$, $\tilde{u}_*(\infty)$ exists and $(\tilde{u}_*)'(\infty)=0$.
Since $\tilde{u}_*$ satisfies (\ref{S3E2}), $(\tilde{u}_*)''(\infty)$ exists and it follows from the boundedness of $\tilde{u}^*$ that $(\tilde{u}_*)''(\infty)=0$. Because of (\ref{S3E2}), $f(\tilde{u}_*(\infty))=0$, hence $\tilde{u}_*(\infty)=0$ or $1$. Suppose that $\tilde{u}_*(\infty)=1$. Let $w:=u-1$. Then $w$ satisfies
\begin{equation}\label{BDDBE2+}
w''+\frac{N-1}{s}w'+\frac{(w+1)^p-(w+1)}{w}w=0.
\end{equation}
If $w$ is close to $0$, then $\{(w+1)^p-(w+1)\}/w$ is close to $p-1$.
Since $p-1>0$, by Sturm's oscillation theorem we see that $w$ oscillates around $0$ which contradicts that $w'<0$.
Thus, $\tilde{u}_*(\infty)=0$.
We have shown that (\ref{BDDBE2}) holds and $\tilde{u}_*\equiv u_*$.

By Proposition~\ref{S22P2} we see that if $p\ge p_S$, then (\ref{BDDBE1}) has no solution.
We obtain a contradiction.
Then, (\ref{BDDBE0}) does not hold, and $|\lambda_1(\gamma)|$ is bounded for $\gamma\in (\gamma^*-\e,\gamma^*)$.
Therefore, there exists a subsequence, which is still denoted by $\{\gamma_j\}$, such that $\lambda_1(\gamma_j)$ converges as $j\rightarrow\infty$. We define $\lambda_1(\gamma^*)=\lim_{j\rightarrow\infty}\lambda_1(\gamma_j)$. Because of Proposition~\ref{LPR}, $\lambda_1(\gamma^*)$ does not depend on the subsequence and it is uniquely determined.
Moreover, $u_s(\sqrt{\lambda_1(\gamma^*)},\gamma^*)=0$.

Second, we prove the case $n\ge 2$.
Let $E$ be as given by (\ref{S2E22++}).
Then
\begin{equation}\label{BDDBE3}
\frac{d}{ds}E(u(s),v(s))=-\frac{N-1}{s}v^2\le 0.
\end{equation}
Therefore, $E(u(s),v(s))\le E(u(\sqrt{\lambda_1(\gamma^*)}),0)<0$ ($s>\sqrt{\lambda_1(\gamma^*)}$), which implies that 
\begin{equation}\label{BDDBE4}
u(s)\ge u(\sqrt{\lambda_1(\gamma^*)})(>0).
\end{equation}
Let $w(s):=u(s)-1$. Then $w$ satisfies (\ref{BDDBE2+}).
Because of (\ref{BDDBE4}), there is $\delta>0$ such that 
\[
\frac{(w+1)^p-(w+1)}{w}>\frac{u(\sqrt{\lambda_1(\gamma^*)}))^p-u(\sqrt{\lambda_1(\gamma^*)})}{u(\sqrt{\lambda_1(\gamma^*)})-1}>\delta >0\  (\gamma>0)
\]
provided that $s>\sqrt{\lambda_1(\gamma^*)}$.
Sturm's oscillation theorem says that $w$ oscillates around $0$ infinitely many times.
Therefore, $u(s)$ has the $n$-th positive critical point $s_n$.
Since $\lambda_n(\gamma^*)=s_n^2$ and $\lambda_n(\gamma)$ is continuous at $\gamma^*$ (Proposition~\ref{LPR}), we see that $|\lambda_n(\gamma)|<C$ for $\gamma\in (\gamma^*-\e,\gamma^*)$.
\end{proof}

Using Lemma~\ref{BDDB}, we prove the global parametrization result.
\begin{lemma}\label{GPR}
For each $n\ge 1$, $\{(\lambda_n(\gamma),U_n(\gamma))\}$ can be defined in $1<\gamma<\infty$.
\end{lemma}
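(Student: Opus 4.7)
The plan is a standard continuation argument combining Lemma~\ref{BDDB} (a priori bound) with Proposition~\ref{LPR} (local extension). Set
\[
\Gamma_n := \sup\{\gamma^* > 1 : \lambda_n \text{ admits a continuous extension to } (1,\gamma^*)\}.
\]
Proposition~\ref{LPR} applied at $(\bar{\lambda}_n,1)$ gives $\Gamma_n > 1$, and the objective is to show $\Gamma_n = \infty$. I would argue by contradiction, assuming $\Gamma_n < \infty$.

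First, pick $\gamma_j \uparrow \Gamma_n$. Lemma~\ref{BDDB} keeps $\lambda_n(\gamma_j)$ bounded, so along a subsequence $\lambda_n(\gamma_j) \to L \ge 0$. Continuous dependence on initial data for (\ref{S3E2}) yields $u(\cdot,\gamma_j) \to u(\cdot,\Gamma_n)$ in $C^2_{\mathrm{loc}}[0,\infty)$, and passing to the limit in the Neumann condition $u_s(\sqrt{\lambda_n(\gamma_j)},\gamma_j)=0$ gives $u_s(\sqrt{L},\Gamma_n)=0$. To upgrade $L \ge 0$ to $L > 0$, note that $\Gamma_n > 1$ implies $u_{ss}(0,\Gamma_n) = -f(\Gamma_n)/N < 0$, so $u_s(s,\Gamma_n) < 0$ on some interval $(0,s_*)$. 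Moreover, any double zero of $u_s(\cdot,\Gamma_n)$ would, by the argument used in the proof of Proposition~\ref{LPR}, force $u(\cdot,\Gamma_n)$ to be constant, contradicting $u(0,\Gamma_n) = \Gamma_n > 1$. Thus the positive critical points of $u(\cdot,\Gamma_n)$ are simple, hence isolated and depending continuously on the parameter, so the $n$-th positive critical point $s_n(\gamma_j) = \sqrt{\lambda_n(\gamma_j)}$ converges to the $n$-th positive critical point of $u(\cdot,\Gamma_n)$, which is at least $s_* > 0$. Hence $L > 0$.

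Next, verify that $U_n^\Gamma(r) := u(\sqrt{L}\,r,\Gamma_n)$ is a nonconstant, positive solution of (\ref{N}): the ODE and Neumann condition follow from the convergence, and for positivity on $[0,1]$ the limit $u(\cdot,\Gamma_n) \ge 0$ cannot vanish anywhere on $[0,\sqrt{L}]$, since $u(0,\Gamma_n) = \Gamma_n > 0$ and a first zero $s_0 \in (0,\sqrt{L}]$ would simultaneously be a local minimum (so $u_s(s_0)=0$), which combined with the ODE and $f(0)=0$ would force $u_{ss}(s_0)=0$ and then ODE uniqueness would give $u \equiv 0$. Proposition~\ref{LPR} applied at $(L,U_n^\Gamma)$ then produces a $C^1$ parametrization on some neighborhood $(\Gamma_n - \delta, \Gamma_n + \delta)$; by ODE uniqueness it coincides with the pre-existing parametrization on $(\Gamma_n - \delta, \Gamma_n)$, so $\lambda_n$ extends continuously to $(1, \Gamma_n + \delta)$, contradicting the definition of $\Gamma_n$.

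I expect the principal subtlety to be the claim that $\sqrt{L}$ is genuinely the $n$-th positive critical point of $u(\cdot,\Gamma_n)$, so that the extended branch remains on $\calC_n$ rather than jumping to $\calC_k$ for some $k \neq n$. This hinges on the simplicity (hence isolatedness and continuous dependence) of each critical point of $u_s(\cdot,\Gamma_n)$, which together preclude any of the first $n-1$ critical points $s_1 < \cdots < s_{n-1}$ from colliding with the $n$-th one in the limit. Once this point is secured the remainder is routine continuation.
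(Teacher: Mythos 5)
Your proposal is correct and follows essentially the same continuation scheme as the paper: define the supremum, use Lemma~\ref{BDDB} for compactness of $\lambda_n(\gamma_j)$, pass to a limit solution by continuous dependence, and reapply Proposition~\ref{LPR} to extend past the supposed endpoint. The only cosmetic difference is that you certify that the limit stays on $\calC_n$ (and that $L>0$) by tracking the simple zeros of $u_s$, i.e.\ the $n$-th positive critical point, whereas the paper does the same bookkeeping through the preserved intersection number $\calZ_{(0,1)}[U_*(\,\cdot\,)-1]=n$ with the constant state.
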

\begin{proof}
Let $\calC_n:=\{(\lambda_n(\gamma),U_n(\gamma))\}$ be the branch which emanates from $(\bar{\lambda}_n,1)$. We extend $\calC_n$. We define
\[
\gamma^*:=\sup\{\bar{\gamma}>1;\ \lambda_n(\gamma)<\infty\ \textrm{for every}\ \gamma\in(1,\bar{\gamma}).\}.
\]
Suppose the contrary, i.e., $\gamma^*<\infty$. Let $\{\gamma_j\}_{j=1}^{\infty}$ $(\gamma_j<\gamma^*,\ \gamma_j\uparrow\gamma^*)$ be a sequence.
Let $U_{n,j}(r)$ be a solution of (\ref{N}) such that $U_{n,j}(0)=\gamma_j$ and $U_{n,j}'(0)=0$.
Because of Lemma~\ref{BDDB}, there are $\lambda_*\ge 0$ and a subsequence of $\{\gamma_j\}$, which is still denoted by $\{\gamma_j\}$, such that $\lambda_n(\gamma_j)\rightarrow\lambda_*$ $(j\rightarrow\infty)$.
Note that it is clear that $\lambda_*>0$.
Because of the continuous dependence of the solution $U(r,\gamma,\lambda)$ of
\begin{equation}\label{GRPE1}
\begin{cases}
U''+\frac{N-1}{r}U'+\lambda f(U)=0, & 0<r<1,\\
U'(0)=0,\ U(0)=\gamma &
\end{cases}
\end{equation}
on $(\gamma,\lambda)$ in $C^1_{loc}[0,\infty)$, there exists a solution $U_*$ of (\ref{GRPE1}) with $(\gamma,\lambda)=(\gamma^*,\lambda_*)$ such that $(U_*)'(1)=0$.
Since each zero of $U_n(\,\cdot\,,\gamma)-1$ is simple, $\calZ_{(0,1)}[U_*(\,\cdot\,)-1]=\calZ_{(0,1)}[U_n(\,\cdot\,,\gamma_j)-1]=n$ for large $j\ge 0$.
Therefore, $(\lambda_*,U_*)$ is a solution of (\ref{N}) and $(\lambda_*,U_*)\in \calC_n$.
By Proposition~\ref{LPR} we can extend the branch $\calC_n$ to $1<\gamma<\gamma^*+\e$ for small $\e>0$.
This contradicts the definition of $\gamma^*$.
Thus, $\gamma^*=\infty$.
\end{proof}
The branch $\calC^+_n$ is unbounded in $\gamma$.

\subsection{Extension to $0<\gamma<1$}
We extend $\calC^-_n$.
\begin{lemma}\label{GPR2}
For each $n\ge 1$, $\{(\lambda_n(\gamma),U_n(\gamma))\}$ can be defined in $0<\gamma<1$.
\end{lemma}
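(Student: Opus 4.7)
The plan is to follow the skeleton of Lemma~\ref{GPR}: set
\[
\gamma_*:=\inf\{\bar\gamma\in (0,1);\ \lambda_n(\gamma)<\infty\text{ for every }\gamma\in(\bar\gamma,1)\},
\]
assume $\gamma_*>0$, take $\gamma_j\downarrow\gamma_*$, and use Proposition~\ref{LPR} to extend the parametrization past $\gamma_*$ for a contradiction. In contrast to the $\gamma>1$ case, no Pohozaev-type nonexistence result is needed here; the whole argument reduces to an energy/Sturm oscillation analysis of the initial-value problem (\ref{S3E2}).

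The crux is to show that for every fixed $\gamma\in(0,1)$ the solution $u(\,\cdot\,,\gamma)$ of (\ref{S3E2}) has infinitely many simple critical points $0<s_1(\gamma)<s_2(\gamma)<\cdots\to\infty$. Using the Lyapunov function $E$ of (\ref{S2E22++}), which is nonincreasing along orbits by (\ref{BDDBE3}), together with the fact that $u\mapsto E(u,0)$ attains its unique minimum at $u=1$, I would first confine the orbit to $u\in[\gamma,\hat\gamma]$, where $\hat\gamma>1$ is the other positive root of $E(u,0)=E(\gamma,0)$ (the value is negative because $\gamma\in(0,1)$). Strict decrease of $E$ for $s>0$ also rules out a return to $u=\gamma$, so $u(s,\gamma)\ge\gamma>0$ for all $s\ge 0$. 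Since $u''(0)=-f(\gamma)/N>0$ and $u$ is bounded above, a first critical point $s_1(\gamma)$ exists, and $u''=-f(u)$ at a zero of $u'$ combined with ODE uniqueness forces $u(s_1(\gamma))>1$. Applying the Sturm-oscillation argument from (\ref{BDDBE2+}) to $w:=u-1$, whose coefficient $\{(w+1)^p-(w+1)\}/w$ extends continuously to $p-1>0$ at $w=0$ and is bounded below by a positive constant on the compact set $u\in[\gamma,\hat\gamma]$, then produces infinitely many sign changes of $w$ and hence infinitely many critical points of $u$. Each such critical point is simple, since $u'(s_0)=u''(s_0)=0$ combined with (\ref{S3E2}) would give $u(s_0)\in\{0,1\}$ and thus $u\equiv u(s_0)$ by uniqueness, contradicting $u(0)=\gamma$.

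Once infinitely many simple critical points are available, continuous dependence of ODE solutions on initial data gives $\gamma\mapsto s_n(\gamma)\in C^1(0,1)$, so $\lambda_n(\gamma):=s_n(\gamma)^2$ remains finite on $(0,1)$. For $\gamma_j\downarrow\gamma_*>0$ we then have $\lambda_n(\gamma_j)\to\lambda_n(\gamma_*)<\infty$ and $U_n(\,\cdot\,,\gamma_j):=u(\sqrt{\lambda_n(\gamma_j)}\,\cdot\,,\gamma_j)\to U_n(\,\cdot\,,\gamma_*)$ in $C^1[0,1]$; simplicity of every zero of $U_n(\,\cdot\,,\gamma)-1$ preserves the count $\calZ_{[0,1]}[U_n(\,\cdot\,,\gamma_*)-1]=n$, so the limit lies on $\calC_n$, and Proposition~\ref{LPR} extends the parametrization past $\gamma_*$, contradicting its definition. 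Hence $\gamma_*=0$.

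The step requiring the most care, though not a serious obstacle, is the consistency of the labeling ``$n$-th critical point'' throughout the extension: one must rule out critical points merging or new ones appearing between $s_n(\gamma)$ and its neighbors as $\gamma$ varies. This follows from simplicity of every $s_k(\gamma)$ and continuous dependence on parameters, since simple critical points vary $C^1$-smoothly in $\gamma$ and can neither collide nor be created in pairs without first becoming degenerate.
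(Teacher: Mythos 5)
Your proposal is correct and follows essentially the same route as the paper: the Lyapunov function $E$ of (\ref{S2E22++}) confines the orbit and gives $u(s,\gamma)\ge\gamma>0$, Sturm's oscillation theorem applied to $w=u-1$ via (\ref{BDDBE2+}) yields infinitely many critical points so that $\lambda_n(\gamma)=s_n(\gamma)^2$ stays finite, and Proposition~\ref{LPR} then extends the branch past the putative infimum $\gamma_*$, giving the contradiction. Your extra remarks (the upper confinement $\hat\gamma$, simplicity of critical points, consistency of the labeling) are harmless refinements of the same argument.
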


\begin{proof}
Let
\[
\gamma_*:=\inf\{\underline{\gamma}>0;\ \lambda_n(\gamma)<\infty\ \textrm{for every}\ \underline{\gamma}<\gamma<1\}.
\]
Suppose the contrary, i.e., $\gamma_*>0$. 
Let $u(s)$ be the solution of the (\ref{S3E2}) with $\gamma=\underline{\gamma}$, and let $v(s):=u'(s)$.
Let $E$ be as given by (\ref{S2E22++}).
Then (\ref{BDDBE3}) holds.
Therefore, $E(u(s),v(s))\le E(\underline{\gamma},0)<0$, which implies that 
\begin{equation}\label{GRP2E1}
u(s)\ge u(0)(=\underline{\gamma}>0).
\end{equation}
Let $w(s):=u(s)-1$. Then $w$ satisfies (\ref{BDDBE2+}).
Because of (\ref{GRP2E1}), there is $\delta>0$ such that $\{(w+1)^p-(w+1)\}/w>(u(0)^p-u(0))/(u(0)-1)>\delta >0$ ($\gamma>0$).
Sturm's oscillation theorem says that $w$ oscillates around $0$ infinitely many times.
Therefore, $u(s)$ has the $n$-th positive critical point $s_n$.
Let $\lambda_n:=s_n^2$. Then $(\lambda(\underline{\gamma}),u)$ is a solution of (\ref{S3E2}) with $\gamma=\underline{\gamma}$.
By Proposition~\ref{LPR} we can extend $\lambda_n(\underline{\gamma})$, which contradicts the definition of $\underline{\gamma}$.
Thus, $\underline{\gamma}=0$, and $\lambda_n(\gamma)$ can be defined in $0<\gamma<1$.
\end{proof}
The branch $\calC^-_n$ is defined in $0<\gamma<1$.

% ****************************************************************************
% ****************************************************************************
% ****************************************************************************
% Section 4
% ****************************************************************************
% ****************************************************************************
% ****************************************************************************
\section{Entire singular solution}
In this section we prove Theorem~\ref{Th1}. Let $(\lambda,U(r))$ be a solution of (\ref{N}). Then $u(s):=U(r)$ $(s:=\sqrt{\lambda}r)$ satisfies (\ref{S1E7}). We use the same change of variables $y(t):=A(p,N)^{-1}s^{\theta}u(s)$ and $t:=m^{-1}\log s$ as in Section~2. Here
\[
m:=\left\{\theta(N-2-\theta)\right\}^{-\frac{1}{2}},
\]
and $\theta$ is defined by (\ref{theta}).
The function $y(t)$ satisfies
\begin{equation}\label{S4E1}
y''+\alpha y'-y+y^p-m^2e^{2mt}y=0,
\end{equation}
where $\alpha:=m(N-2-2\theta)$. We mainly consider (\ref{S4E1}) in this section.

\subsection{Existence of the singular solution near $r=0$}
We construct the singular solution of (\ref{S1E7}) near $s=0$.
\begin{lemma}\label{S41L0}
The problem
\begin{equation}\label{S41L0E0}
\begin{cases}
y''+\alpha y'-y+y^p-m^2e^{2mt}y=0, \\
y(t)\rightarrow 1\quad (t\rightarrow -\infty)
\end{cases}
\end{equation}
has a unique solution. 
\end{lemma}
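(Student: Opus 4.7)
My plan is to linearize the equation around the putative limit $y=1$ and solve the resulting perturbation problem by a contraction mapping argument in a weighted Banach space of functions that decay as $t\to -\infty$.

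Write $z:=y-1$, so the equation becomes
\[
Lz:=z''+\alpha z'+(p-1)z = m^2 e^{2mt}(1+z)-g(z),
\]
where $g(z):=(1+z)^p-(1+z)-(p-1)z = O(z^2)$ near $z=0$, and the condition becomes $z(t)\to 0$ as $t\to -\infty$. The characteristic roots $\Lambda_\pm$ of $L$ satisfy $\mathrm{Re}\,\Lambda_\pm = -\alpha/2 < 0$, so $2m$ is not a root and the Green's function $G$ of $L$ built from two linearly independent homogeneous solutions obeys $|G(\tau)|\le C_G e^{-\sigma\tau}$ for all $\tau\ge 0$, with some $\sigma>0$.

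\emph{Existence.} I reformulate the problem as the fixed-point equation
\[
z(t) = (Tz)(t) := \int_{-\infty}^{t} G(t-s)\bigl[m^2 e^{2ms}(1+z(s))-g(z(s))\bigr]\,ds
\]
on the Banach space
\[
X_{T_0} := \{z\in C((-\infty,T_0]) : \|z\|_{X_{T_0}}:=\sup_{t\le T_0} e^{-2mt}|z(t)|<\infty\}.
\]
Since $2m+\sigma>0$ and the forcing decays like $e^{2ms}$, direct computation based on $\int_{-\infty}^{t} e^{-\sigma(t-s)}e^{2ms}\,ds = O(e^{2mt})$ yields $\|Tz\|_{X_{T_0}}\le C_0 + C_1 e^{2mT_0}\|z\|_{X_{T_0}}$ and an analogous Lipschitz estimate $\|Tz_1-Tz_2\|_{X_{T_0}}\le C_1 e^{2mT_0}\|z_1-z_2\|_{X_{T_0}}$. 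For $T_0$ sufficiently negative, $T$ becomes a contraction on the closed ball of radius $2C_0$ in $X_{T_0}$, producing a unique fixed point $z\in X_{T_0}$. The function $y:=1+z$ solves (\ref{S41L0E0}) near $t=-\infty$, and standard ODE continuation extends $y$ to all of $\R$.

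\emph{Uniqueness.} Suppose $\tilde y$ is another solution with $\tilde y(t)\to 1$, and set $\tilde z:=\tilde y-1\to 0$. The key step is a bootstrap showing $\tilde z\in X_{T_0}$ for some $T_0$. Since $\tilde z$ is bounded on $(-\infty,T_1]$, the right-hand side $m^2 e^{2mt}(1+\tilde z)-g(\tilde z)$ is integrable against $|G(t-\cdot)|$ on $(-\infty,t]$, so the variation-of-constants identity $\tilde z = T\tilde z + c_1 e^{\Lambda_+ t}+c_2 e^{\Lambda_- t}$ holds with some constants $c_1,c_2$. Both $T\tilde z(t)$ and $\tilde z(t)$ vanish as $t\to -\infty$, whereas $|c_1 e^{\Lambda_+ t}+c_2 e^{\Lambda_- t}|\to\infty$ for any $(c_1,c_2)\neq (0,0)$ because $\mathrm{Re}\,\Lambda_\pm<0$; thus $c_1=c_2=0$. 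Feeding the pointwise bound $|\tilde z(s)|\le\epsilon$ into $\tilde z=T\tilde z$ then yields $|\tilde z(t)|\le C e^{2mt}$ for $t$ small, so $\tilde z\in X_{T_0}$, and the contraction step identifies $\tilde z$ with $z$.

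\emph{Main obstacle.} The delicate point is the bootstrap in the uniqueness step: verifying that a solution merely tending to $0$ automatically has the sharp $O(e^{2mt})$ decay needed to lie in $X_{T_0}$. This rests on the nondegeneracy $\mathrm{Re}\,\Lambda_\pm<0$ (which leaves no room for a non-decaying homogeneous component at $-\infty$) together with the quadratic smallness of $g$ once $\tilde z$ itself is known to be small.
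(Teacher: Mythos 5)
Your proof is correct and takes essentially the same route as the paper: setting $z=y-1$ (the paper uses $\zeta=-t$, $\eta=y-1$), recasting the problem as a fixed-point equation for the integral operator built from the exponentially decaying Green's function of $z''+\alpha z'+(p-1)z$, with no homogeneous component since nontrivial homogeneous solutions are unbounded as $t\rightarrow -\infty$, and running a contraction mapping, your uniqueness bootstrap being exactly the paper's a priori decay estimate (Lemma~\ref{S41L1}). Two minor caveats that do not affect the argument: $\mathrm{Re}\,\Lambda_{\pm}=-\alpha/2$ only when $\alpha^2<4(p-1)$; if $p\ge p_{JL}$ the roots are real (distinct or repeated), so the decay rate $\sigma$ and the homogeneous basis (which may contain $te^{\Lambda t}$) must be handled case by case, as the paper does, and your closing claim that continuation extends $y$ to all of $\R$ is not automatic from local ODE theory (it requires the energy bound of Lemma~\ref{S42L1}), though it is not needed for this lemma.
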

The proof is essentially the same as one of \cite[Theorem~A]{MP91}.
We use the following lemma to prove Lemma~\ref{S41L0}.
\begin{lemma}\label{S41L1}
Assume that (\ref{S41L0E0}) has a solution $y^*(t)$. Then, $y^*(t)$ satisfies
\[
y^*(t)=1+O(e^{2mt})\quad (t\rightarrow -\infty).
\]
\end{lemma}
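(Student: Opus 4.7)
Set $w(t):=y^*(t)-1$, which by hypothesis tends to $0$ as $t\to-\infty$. Substituting $y=1+w$ into (\ref{S41L0E0}) and using the Taylor expansion $-(1+w)+(1+w)^p=(p-1)w+R(w)$ with $R(w)=\tfrac{p(p-1)}{2}w^2+O(w^3)$ recasts the equation as
\[
\calL w := w''+\alpha w'+(p-1)w=m^2 e^{2mt}(1+w)-R(w)=:h(t).
\]
Since $p>p_S$ forces $\alpha=m(N-2-2\theta)>0$, both characteristic roots of $\calL$ have real part $-\alpha/2<0$, so every nonzero homogeneous solution of $\calL\phi=0$ grows exponentially as $t\to-\infty$.

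Variation of parameters produces a Green's kernel $G(t,\tau)$ with $|G(t,\tau)|\leq C_G e^{-\alpha(t-\tau)/2}$ for $\tau\leq t$. Because $h(\tau)\to 0$ (since $w\to 0$), the convolution $\int_{-\infty}^t G(t,\tau)h(\tau)d\tau$ converges and defines a particular solution $w_p$ that tends to $0$ at $-\infty$; any other solution of $\calL w=h$ vanishing at $-\infty$ differs from $w_p$ by a homogeneous solution, and the growth of the homogeneous solutions forces that difference to be zero. Hence
\[
w(t)=\int_{-\infty}^{t}G(t,\tau)\,h(\tau)\,d\tau.
\]

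The rate $O(e^{2mt})$ now follows by a one-step bootstrap. Choose $T\ll 0$ so that $|w(t)|\leq\e$ for $t\leq T$ with $\e$ small. Then $|R(w)|\leq C_R w^2\leq C_R\e|w|$, and therefore $|h(t)|\leq C_0 e^{2mt}+C_R\e|w(t)|$. Plugging this into the integral representation and using the explicit evaluations
\[
\int_{-\infty}^t e^{-\alpha(t-\tau)/2}\,e^{2m\tau}\,d\tau=\frac{e^{2mt}}{\alpha/2+2m},\qquad \int_{-\infty}^t e^{-\alpha(t-\tau)/2}\,d\tau=\frac{2}{\alpha},
\]
yields
\[
|w(t)|\leq C_1 e^{2mt}+\frac{2C_G C_R\e}{\alpha}\sup_{s\leq t}|w(s)|.
\]
Setting $Q(t):=\sup_{s\leq t}|w(s)|$, which is finite for $t\leq T$ because $w\to 0$, taking the supremum in $s\leq t$ on both sides, and choosing $\e$ so small that $2C_G C_R\e/\alpha<1/2$ gives $Q(t)\leq 2C_1 e^{2mt}$, i.e., $y^*(t)=1+O(e^{2mt})$.

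The main obstacle is justifying the convolution representation for $w$: because $\alpha>0$ (equivalent to $p>p_S$) makes \emph{every} nontrivial homogeneous solution of $\calL$ large at $-\infty$, the hypothesis $w(t)\to 0$ eliminates any homogeneous contribution and pins $w$ to $w_p$. Once this is in place, the bootstrap from the crude fact $w\to 0$ to the sharp rate $O(e^{2mt})$ is essentially a one-shot Gronwall-type estimate driven by the leading source $m^2 e^{2mt}$.
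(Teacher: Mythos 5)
Your argument is essentially the paper's own proof, transplanted from the variable $\zeta=-t$ back to $t$: the paper likewise inverts the linear operator by variation of parameters, justifies the resulting integral representation by the unboundedness (as $\zeta\to\infty$, i.e.\ $t\to-\infty$) of the homogeneous solutions, bounds the nonlinear remainder by $\e|\eta|$, and absorbs it to read off the rate $e^{2mt}$ from the forcing $m^2e^{2mt}$. The only real differences are cosmetic: the paper writes the kernel explicitly in three spectral cases and closes the estimate through $\int_\zeta^\infty|\eta(\sigma)|\,d\sigma$, whereas you use a single kernel bound and a sup-absorption via $Q(t)$, which is if anything slightly cleaner. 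One statement does need correcting, though it does not damage the proof: the roots of $\Lambda^2+\alpha\Lambda+(p-1)=0$ both have real part $-\alpha/2$ only when $\alpha^2\le 4(p-1)$, i.e.\ $p_S<p\le p_{JL}$; for $p>p_{JL}$ (possible when $N\ge 11$, and the lemma is claimed for all $p>p_S$) the roots are real, distinct and negative, and in the equal-root case the kernel carries an extra factor $(t-\tau)$. Hence the uniform bound $|G(t,\tau)|\le C_Ge^{-\alpha(t-\tau)/2}$ is not correct in those cases; what is true, and all you need, is $|G(t,\tau)|\le C e^{-c(t-\tau)}$ for some $c>0$ (any rate strictly below the modulus of the larger root), which holds because both roots have strictly negative real part ($\alpha>0$, $p-1>0$), so homogeneous solutions still blow up at $-\infty$ and the pinning of $w$ to $w_p$ survives. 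With that replacement your two explicit integrals become $e^{2mt}/(c+2m)$ and $1/c$, and the bootstrap goes through verbatim.
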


\begin{proof}
Let $\zeta:=-t$ and $\eta(\zeta):=y(t)-1$.
The function $\eta(\zeta)$ satisfies
\begin{equation}\label{S41L1E1}
\begin{cases}
\eta''-\alpha\eta'+(p-1)\eta=g(\zeta),& \zeta_0<\zeta<\infty,\\
\eta(\zeta)\rightarrow 0\quad (\zeta\rightarrow\infty),
\end{cases}
\end{equation}
where $\zeta_0$ is a large number,
\begin{equation}\label{S41L1E2}
g(\zeta):=m^2e^{-2m\zeta}(1+\eta(\zeta))-\varphi(\eta(\zeta)),
\end{equation}
\begin{equation}\label{S41L1E3}
\varphi(\eta):=(1+\eta)^p-1-p\eta.
\end{equation}
There are three possibilities:
\begin{equation}\label{S41L1E3+}
\textrm{(1)}\ p-1>\left(\frac{\alpha}{2}\right)^2,\quad
\textrm{(2)}\ p-1<\left(\frac{\alpha}{2}\right)^2,\quad
\textrm{(3)}\ p-1=\left(\frac{\alpha}{2}\right)^2.
\end{equation}

\noindent
{\it Case (1):} Let $\beta:=\sqrt{(p-1)-\left(\frac{\alpha}{2}\right)^2}$.
Because the linearly independent solutions of the homogeneous equation associated with the equation in (\ref{S41L1E1}) becomes unbounded as $\zeta\rightarrow\infty$, we have
\begin{equation}\label{S41L1E4}
\eta(\zeta)=\frac{e^{\frac{\alpha}{2}\zeta}}{\beta}\int_{\zeta}^{\infty}e^{-\frac{\alpha}{2}\sigma}\sin(\beta(\sigma-\zeta))g(\sigma)d\sigma.
\end{equation}
If $|\eta|$ is small, then there are $\e>0$ and $\zeta_{\e}>0$ such that
\begin{equation}\label{S41L1E5}
|\varphi(\eta)|\le|(1+\eta)^p-1-p\eta|\le\e|\eta|\quad (\zeta>\zeta_{\e}).
\end{equation}
By (\ref{S41L1E2}) and (\ref{S41L1E5}) we have
\begin{equation}\label{S41L1E6}
|g(\zeta)|\le C_0e^{-2m\zeta}+\e|\eta(\zeta)|\quad (\zeta>\zeta_{\e}).
\end{equation}
Using (\ref{S41L1E4}), we have
\begin{align}
\int_{\zeta}^{\infty}|\eta(\sigma)|d\sigma &\le\int_{\zeta}^{\infty}\frac{e^{\frac{\alpha}{2}\sigma}}{\beta}\int_{\sigma}^{\infty}e^{-\frac{\alpha}{2}\tau}|g(\tau)|d\tau d\sigma\nonumber\\
&=\left[\frac{2e^{\frac{\alpha}{2}\sigma}}{\alpha\beta}\int_{\sigma}^{\infty}e^{-\frac{\alpha}{2}\tau}|g(\tau)|d\tau\right]_{\zeta}^{\infty}+\int_{\zeta}^{\infty}\frac{2e^{\frac{\alpha}{2}\sigma}}{\alpha\beta}e^{-\frac{\alpha}{2}\sigma}|g(\sigma)|d\sigma\nonumber\\
&\le -\frac{2e^{\frac{\alpha}{2}\zeta}}{\alpha\beta}\int_{\zeta}^{\infty}e^{-\frac{\alpha}{2}\sigma}|g(\sigma)|d\sigma+\frac{2}{\alpha\beta}\int_{\zeta}^{\infty}|g(\sigma)|d\sigma\nonumber\\
&\le\frac{2}{\alpha\beta}\int_{\zeta}^{\infty}|g(\sigma)|d\sigma,\label{S41L1E7}
\end{align}
where we use
\[
\lim_{\sigma\rightarrow\infty}\frac{2\int_{\sigma}^{\infty}e^{-\frac{\alpha}{2}\tau}|g(\tau)|d\tau}{\alpha\beta e^{-\frac{\alpha}{2}\sigma}}
=\lim_{\sigma\rightarrow\infty}\frac{-2e^{-\frac{\alpha}{2}\sigma}|g(\sigma)|}{\alpha\beta\left(-\frac{\alpha}{2}\right)e^{-\frac{\alpha}{2}\sigma}}=0.
\]
By (\ref{S41L1E6}) and (\ref{S41L1E7}) we have
\[
\int_{\zeta}^{\infty}|\eta(\sigma)|d\sigma\le\frac{2}{\alpha\beta}\int_{\zeta}^{\infty}\left( C_0e^{-2m\sigma}+\e|\eta(\sigma)|\right)d\sigma\quad (\zeta>\zeta_{\e}).
\]
Hence,
\begin{align}
\left(1-\frac{2\e}{\alpha\beta}\right)\int_{\zeta}^{\infty}|\eta(\sigma)|d\sigma
&\le\frac{2C_0}{\alpha\beta}\int_{\zeta}^{\infty}e^{-2m\sigma}d\sigma\quad \nonumber\\
&=\frac{C_0e^{-2m\zeta}}{\alpha\beta m}\quad(\zeta>\zeta_{\e}).\label{S41L1E8}
\end{align}
On the other hand, by (\ref{S41L1E4}) and (\ref{S41L1E6}) we have
\begin{align}
|\eta(\zeta)|&\le
\frac{e^{\frac{\alpha}{2}\zeta}}{\beta}\int_{\zeta}^{\infty}e^{-\frac{\alpha}{2}\sigma}\left( C_0e^{-2m\sigma}+\e|\eta(\sigma)|\right)d\sigma\nonumber\\
&=\frac{C_0}{\beta}e^{\frac{\alpha}{2}\zeta}
\int_{\zeta}^{\infty}e^{-\left(\frac{\alpha}{2}+2m\right)\sigma}d\sigma
+\frac{\e}{\beta}\int_{\zeta}^{\infty}|\eta(\sigma)|d\sigma\nonumber\\
&=\frac{C_0e^{-2m\zeta}}{\beta\left(\frac{\alpha}{2}+2m\right)}+\frac{\e}{\beta}\int_{\zeta}^{\infty}|\eta(\sigma)|d\sigma.\label{S41L1E9}
\end{align}
By (\ref{S41L1E8}) and (\ref{S41L1E9}) we have
\begin{align}
|\eta(\zeta)| &\le\frac{C_0e^{-2m\zeta}}{\beta\left(\frac{\alpha}{2}+2m\right)}
+\frac{C_0\e}{\left( 1-\frac{2\e}{\alpha\beta}\right)\alpha\beta^2m}e^{-2m\zeta}\nonumber\\
&=:C_1e^{-2m\zeta}.\label{S41L1E10}
\end{align}

\noindent
{\it Case (2):} Let $\beta:=\sqrt{\left(\frac{\alpha}{2}\right)^2-(p-1)}$.
Then, $\frac{\alpha}{2}-\beta>0$. We have
\begin{equation}\label{S41L1E11}
\eta(\zeta)=\frac{e^{\frac{\alpha}{2}\zeta}}{\beta}\int_{\zeta}^{\infty}e^{-\frac{\alpha}{2}\sigma}\sinh (\beta(\sigma-\zeta))g(\sigma)d\sigma.
\end{equation}
Using (\ref{S41L1E11}) and $|\sinh(\beta(\tau-\sigma))|\le e^{\beta(\tau-\sigma)}/2$ $(\tau\ge\sigma)$, we have

\begin{align}
\int_{\zeta}^{\infty}|\eta(\sigma)|d\sigma
&\le
\int_{\zeta}^{\infty}\frac{e^{\frac{\alpha}{2}\sigma}}{\beta}\int_{\sigma}^{\infty}e^{-\frac{\alpha}{2}\tau}|\sinh(\beta(\tau-\sigma))||g(\tau)|d\tau d\sigma\nonumber\\
&\le\int_{\zeta}^{\infty}\frac{e^{\frac{\alpha}{2}\sigma}}{\beta}\int_{\sigma}^{\infty}e^{-\frac{\alpha}{2}\tau}\frac{e^{\beta(\tau-\sigma)}}{2}|g(\tau)|d\tau d\sigma\nonumber\\
&=\int_{\zeta}^{\infty}\frac{e^{\left(\frac{\alpha}{2}-\beta\right)\sigma}}{2\beta}\int_{\sigma}^{\infty}e^{-\left(\frac{\alpha}{2}-\beta\right)\tau}|g(\tau)|d\tau d\sigma\nonumber\\
&=\left[\frac{e^{\left(\frac{\alpha}{2}-\beta\right)\sigma}}{\beta(\alpha-2\beta)}\int_{\sigma}^{\infty}e^{-\left(\frac{\alpha}{2}-\beta\right)\tau}|g(\tau)|d\tau\right]_{\zeta}^{\infty}\nonumber\\
&\quad+\frac{1}{\beta(\alpha-2\beta)}\int_{\zeta}^{\infty}|g(\sigma)|d\sigma\nonumber\\
&=-\frac{e^{\left(\frac{\alpha}{2}-\beta\right)\zeta}}{\beta(\alpha-2\beta)}\int_{\zeta}^{\infty}e^{-\left(\frac{\alpha}{2}-\beta\right)\tau}|g(\tau)|d\tau\nonumber\\
&\quad+\frac{1}{\beta(\alpha-2\beta)}\int_{\zeta}^{\infty}|g(\sigma)|d\sigma\nonumber\\
&\le\frac{1}{\beta(\alpha-2\beta)}\int_{\zeta}^{\infty}|g(\sigma)|d\sigma,\label{S41L1E12}
\end{align}
where we use $\frac{\alpha}{2}-\beta>0$ and
\[
\lim_{\sigma\rightarrow\infty}
\frac{\int_{\sigma}^{\infty}e^{-\left(\frac{\alpha}{2}-\beta\right)\tau}|g(\tau)|d\tau}{\beta(\alpha-2\beta)e^{-\left(\frac{\alpha}{2}-\beta\right)\sigma}}
=\lim_{\sigma\rightarrow\infty}\frac{-e^{-\left(\frac{\alpha}{2}-\beta\right)\sigma}|g(\sigma)|}{\beta(\alpha-2\beta)\left(-\frac{\alpha}{2}+\beta\right)e^{-\left(\frac{\alpha}{2}-\beta\right)\sigma}}=0.
\]
In the case (2) (\ref{S41L1E6}) also holds. By (\ref{S41L1E6}) and (\ref{S41L1E12}) we have
\[
\int_{\zeta}^{\infty}|\eta(\sigma)|d\sigma\le\frac{1}{\beta(\alpha-2\beta)}\int_{\zeta}^{\infty}\left( C_0e^{-2m\sigma}+\e|\eta(\sigma)|\right)d\sigma\quad (\zeta>\zeta_{\e}).
\]
Hence,
\begin{align}
\left( 1-\frac{\e}{\beta(\alpha-2\beta)}\right)\int_{\zeta}^{\infty}|\eta(\sigma)|d\sigma
&\le\frac{C_0}{\beta(\alpha-2\beta)}\int_{\zeta}^{\infty}e^{-2m\sigma}d\sigma\nonumber\\
&=\frac{C_0}{2m\beta(\alpha-2\beta)}e^{-2m\zeta}\quad (\zeta>\zeta_{\e}).\label{S41L1E13}
\end{align}
On the other hand, by (\ref{S41L1E11}) and (\ref{S41L1E6}) we have
\begin{align}
|\eta(\zeta)|&\le\frac{e^{\frac{\alpha}{2}\zeta}}{\beta}\int_{\zeta}^{\infty}e^{-\frac{\alpha}{2}\sigma}\frac{e^{\beta(\sigma-\zeta)}}{2}\left( C_0e^{-2m\sigma}+\e|\eta(\sigma)|\right)d\sigma\nonumber\\
&=\frac{C_0}{2\beta}e^{\left(\frac{\alpha}{2}-\beta\right)\zeta}\int_{\zeta}^{\infty}e^{-\left(\frac{\alpha}{2}-\beta+2m\right)\sigma}d\sigma\nonumber\\
&\quad+\frac{\e}{2\beta}e^{\left(\frac{\alpha}{2}-\beta\right)\zeta}\int_{\zeta}^{\infty}e^{-\left(\frac{\alpha}{2}-\beta\right)\sigma}|\eta(\sigma)|d\sigma\nonumber\\
&\le\frac{C_0e^{-2m\zeta}}{2\beta\left(\frac{\alpha}{2}-\beta+2m\right)}+\frac{\e}{2\beta}\int_{\zeta}^{\infty}|\eta(\sigma)|d\sigma.\label{S41L1E14}
\end{align}
By (\ref{S41L1E13}) and (\ref{S41L1E14}) we have
\begin{align}
|\eta(\zeta)|
&\le\frac{C_0e^{-2m\zeta}}{\beta(\alpha-2\beta+4m)}
+\frac{C_0\e}{4m\beta\left(\beta(\alpha-2\beta)-\e\right)}e^{-2m\zeta}\nonumber\\
&=:C_2e^{-2m\zeta}\quad(\zeta>\zeta_{\e}).\label{S41L1E15}
\end{align}

\noindent
{\it Case (3):} In this case we have
\begin{equation}\label{S41L1E16}
\eta(\zeta)=e^{\frac{\alpha}{2}\zeta}\int_{\zeta}^{\infty}e^{-\frac{\alpha}{2}\sigma}(\sigma-\zeta)g(\sigma)d\sigma.
\end{equation}
For each $\beta>0$, we have
\[
|\sigma-\zeta|\le\frac{1}{\beta}|\sinh(\beta(\sigma-\zeta))|.
\]
Hence,
\[
|\eta(\zeta)|\le\frac{e^{\frac{\alpha}{2}\zeta}}{\beta}\int_{\zeta}^{\infty}e^{-\frac{\alpha}{2}\sigma}|\sinh(\beta(\sigma-\zeta))||g(\sigma)|d\sigma.
\]
Therefore, by the same argument as in Case (2) we have (\ref{S41L1E15}). However, $\beta$ is not the same value as in Case (2).

We have verified all the cases.
\end{proof}

\begin{proof}[Proof of Lemma~\ref{S41L0}]
There are three cases (\ref{S41L1E3+}) as in the proof of Lemma~\ref{S41L1}.
We consider only the case (1).
The other cases follow similarly.

We transform (\ref{S41L0E0}) to the integral equation
\begin{equation}\label{S41L0E1}
\eta(\zeta)=\calF(\eta)(\zeta).
\end{equation}
The integral operator $\calF$ is different in each case. In this proof we write $g(\eta,\sigma)$ in order to stress the dependence of $g$ on $\eta$. In the case (1) $\calF$ becomes
\[
\calF(\eta)(\zeta)=\frac{e^{\frac{\alpha}{2}\zeta}}{\beta}\int_{\zeta}^{\infty}e^{-\frac{\alpha}{2}\sigma}\sin(\beta(\sigma-\zeta))g(\eta,\sigma)d\sigma.
\]
We show by contraction mapping theorem that (\ref{S41L0E1}) has a unique solution. Let $\zeta_0>0$ be large. Hereafter, by $\|\,\cdot\,\|$ we denote $\|\,\cdot\,\|_{C^0[\zeta_0,\infty)}$.
We set $X:=\{\eta(\zeta)\in C^0[\zeta_0,\infty);\ \|\eta(\zeta)\|<\infty\}$ and $\calB:=\{\eta(\zeta)\in X;\ \|\eta\|<\bar{\eta}\}$. Let $\eta_1$, $\eta_2\in\calB$. If $\bar{\eta}>0$ is small, then there is a small $\e>0$ such that
\begin{equation}\label{S41L0E2}
|\varphi(\eta_1)-\varphi(\eta_2)|\le\e|\eta_1-\eta_2|.
\end{equation}
Using (\ref{S41L0E2}), we have
\begin{align}
|g(\eta_1,\sigma)-g(\eta_2,\sigma)|
&\le m^2e^{-2m\zeta}|\eta_1-\eta_2|+\e|\eta_1-\eta_2|\nonumber\\
&\le 2\e|\eta_1-\eta_2|,\label{S41L0E3}
\end{align}
provided that $\zeta>0$ is large. By (\ref{S41L0E3}) we have
\begin{align}
\|\calF(\eta_1)-\calF(\eta_2)\|&\le
\left\|\frac{e^{\frac{\alpha}{2}\zeta}}{\beta}\int_{\zeta}^{\infty}e^{-\frac{\alpha}{2}\sigma}|g(\eta_1,\sigma)-g(\eta_2,\sigma)|d\sigma\right\|\nonumber\\
&\le 2\e\left\|\frac{e^{\frac{\alpha}{2}\zeta}}{\beta}\int_{\zeta}^{\infty}e^{-\frac{\alpha}{2}\sigma}d\sigma\right\|\|\eta_1-\eta_2\|\nonumber\\
&=\frac{4\e}{\alpha\beta}\|\eta_1-\eta_2\|.\label{S41L0E4}
\end{align}
On the other hand,
\begin{align}
\|\calF(0)\|&\le\left\|\frac{e^{\frac{\alpha}{2}\zeta}}{\beta}\int_{\zeta}^{\infty}e^{-\frac{\alpha}{2}\sigma}|g(0,\sigma)|d\sigma\right\|\nonumber\\
&\le\left\|\frac{e^{\frac{\alpha}{2}\zeta}}{\beta}\int_{\zeta}^{\infty}e^{-\frac{\alpha}{2}\sigma}m^2e^{-2m\sigma}d\sigma\right\|\nonumber\\
&=\frac{m^2}{\beta\left(\frac{\alpha}{2}+2m\right)}e^{-2m\zeta}\nonumber\\
&<\e,\label{S41L0E5}
\end{align}
provided that $\zeta>0$ is large. By (\ref{S41L0E4}) and (\ref{S41L0E5}) we see that if $\e>0$ is small, then
\begin{align}
\|\calF(\eta_1)\| &=\|\calF(\eta_1)-\calF(0)+\calF(0)\|\nonumber\\
&\le \|\calF(\eta_1)-\calF(0)\|+\|\calF(0)\|\nonumber\\
&\le\frac{4\e}{\alpha\beta}\|\eta_1\|+\e\nonumber\\
&<\bar{\eta}.\label{S41L0E6}
\end{align}
Because of (\ref{S41L0E4}) and (\ref{S41L0E6}), if $\e>0$ is small, then $\calF$ is a contraction mapping from $\calB$ into itself. Therefore, (\ref{S41L0E1}) has a unique solution.
\end{proof}

Hereafter, let $y^*(t)$ be the solution given by Lemma~\ref{S41L0}, and
\begin{equation}\label{sing}
u^*(s):=As^{-\theta}y^*(m^{-1}\log s).
\end{equation}
Then the following corollary is an immediate consequence of Lemmas~\ref{S41L0} and \ref{S41L1}.
\begin{corollary}\label{S41C1}
\begin{equation}\label{S41C1E1}
u^*(s)=As^{-\theta}\left( 1+o(1)\right)\ \ \textrm{as}\ \ s\downarrow 0.
\end{equation}
Here, $A=A(p,N)$.
\end{corollary}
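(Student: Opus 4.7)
The plan is to simply unwind the change of variables used to define the singular solution. By the formula \eqref{sing},
\[
u^*(s) = A s^{-\theta}\, y^*(m^{-1}\log s).
\]
Introducing $\tau := m^{-1}\log s$, the regime $s \downarrow 0$ corresponds to $\tau \to -\infty$ (note that $m>0$, since $p>p_S$ forces $\theta(N-2-\theta)>0$). Lemma~\ref{S41L0} produces $y^*$ as the unique solution of \eqref{S41L0E0} satisfying $y^*(\tau) \to 1$ as $\tau \to -\infty$. Writing $y^*(m^{-1}\log s) = 1 + (y^*(m^{-1}\log s) - 1)$ and factoring out $A s^{-\theta}$ therefore yields
\[
u^*(s) = A s^{-\theta}\bigl( 1 + o(1)\bigr) \quad (s \downarrow 0),
\]
which is precisely \eqref{S41C1E1}.

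There is essentially no obstacle, since all of the analytic content has already been carried out in Lemmas~\ref{S41L0} and \ref{S41L1}; the corollary is a one-line substitution. It is worth noting, however, that the argument automatically yields a quantitative refinement: Lemma~\ref{S41L1} gives $y^*(\tau) = 1 + O(e^{2m\tau})$ as $\tau \to -\infty$, and since $e^{2m\tau} = s^2$ under the substitution $\tau = m^{-1}\log s$, this upgrades the conclusion to
\[
u^*(s) = A s^{-\theta}\bigl( 1 + O(s^2)\bigr) \quad (s \downarrow 0),
\]
which is strictly sharper than the $(1+o(1))$ asymptotic claimed. Either way, the only ingredient needed beyond what is already in the text is the translation between the $s$ and $\tau=m^{-1}\log s$ variables.
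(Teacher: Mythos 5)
Your proposal is correct and is essentially the paper's own argument: the text derives Corollary~\ref{S41C1} as an immediate consequence of Lemmas~\ref{S41L0} and \ref{S41L1} via the substitution $u^*(s)=As^{-\theta}y^*(m^{-1}\log s)$ with $s\downarrow 0$ corresponding to $t\to-\infty$. Your extra observation that Lemma~\ref{S41L1} upgrades the error to $O(e^{2mt})=O(s^2)$ is a valid (though unneeded) refinement.
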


\subsection{Positivity of the entire singular solution}
\begin{lemma}\label{S42L1}
The domain of the singular solution $y^*(t)$ obtained in Lemma~\ref{S41L0} can be extended to $t\in (-\infty,\infty)$, and $y^*(t)>0$ $(-\infty<t<\infty)$. Therefore, the domain of $u^*(s)$ can be extended to $s\in(0,\infty)$, and $u^*(s)>0$ $(0<s<\infty)$.
\end{lemma}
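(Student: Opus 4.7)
The plan is to introduce a Lyapunov function for the ODE (\ref{S4E1}) that is strictly negative at $t=-\infty$ and nonincreasing along any solution; positivity will then be read off by evaluating it where $y$ would vanish, and global extension by extracting an a priori bound from its sign. Concretely, I would take
\[
V(t) := \frac{1}{2}\bigl(y'(t)\bigr)^{2} - \frac{1}{2}y(t)^{2} + \frac{1}{p+1}y(t)^{p+1} - \frac{m^{2}}{2}e^{2mt}y(t)^{2},
\]
differentiate, and substitute (\ref{S4E1}) to obtain
\[
V'(t) = y'\bigl(y''-y+y^{p}-m^{2}e^{2mt}y\bigr) - m^{3}e^{2mt}y^{2} = -\alpha\bigl(y'\bigr)^{2} - m^{3}e^{2mt}y^{2}.
\]
The hypothesis $p>p_{S}$ is equivalent to $\theta<(N-2)/2$, so $\alpha=m(N-2-2\theta)>0$ and hence $V'(t)\le 0$ along $y^{*}$.

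Next I would compute $V(-\infty)$. Lemma~\ref{S41L0} gives $y^{*}(t)\to 1$; the estimate $|\eta(\zeta)|=O(e^{-2m\zeta})$ of Lemma~\ref{S41L1}, combined with the analogous estimate for $\eta'(\zeta)$ obtained by differentiating the integral representations (\ref{S41L1E4}), (\ref{S41L1E11}), or (\ref{S41L1E16}) in $\zeta$, yields $y^{*\prime}(t)\to 0$. Since $e^{2mt}y^{*}(t)^{2}\to 0$ as well, it follows that
\[
\lim_{t\to-\infty}V(t) = -\frac{1}{2}+\frac{1}{p+1} = -\frac{p-1}{2(p+1)} < 0,
\]
and by monotonicity $V(t)\le -\frac{p-1}{2(p+1)}<0$ throughout the maximal interval of existence of $y^{*}$.

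From here the positivity assertion falls out at once: if $y^{*}(t_{0})=0$ for some $t_{0}$, then $V(t_{0})=\tfrac12(y^{*\prime}(t_{0}))^{2}\ge 0$, contradicting $V(t_{0})<0$; so $y^{*}$ never vanishes, and since $y^{*}(-\infty)=1>0$ and $y^{*}$ is continuous, it remains strictly positive on its existence interval. For the global extension I would fix an arbitrary $T\in\R$, note that $e^{2mt}\le e^{2mT}$ on $(-\infty,T]$, and use $V\le -\tfrac{p-1}{2(p+1)}$ in the rearranged form
\[
\frac{(y^{*})^{p+1}}{p+1} \le \frac{(y^{*})^{2}}{2}\bigl(1+m^{2}e^{2mT}\bigr) - \frac{p-1}{2(p+1)};
\]
since $p+1>2$, this forces $y^{*}(t)\le C_{T}$, after which the same sign constraint on $V$ forces $|y^{*\prime}(t)|\le C_{T}'$. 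Standard ODE continuation then places $y^{*}$ on all of $\R$, and reading off $u^{*}(s)=As^{-\theta}y^{*}(m^{-1}\log s)$ yields the assertion for $u^{*}$ on $(0,\infty)$.

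The main obstacle I expect is the verification that $y^{*\prime}(t)\to 0$ as $t\to-\infty$, which is needed to identify $V(-\infty)$ but is not literally contained in the statement of Lemma~\ref{S41L1}; it does, however, follow from differentiating the integral equations of Lemma~\ref{S41L1} and rerunning the same estimates. The conceptual core, though, is simply that $p>p_{S}$ forces $\alpha>0$, which is precisely what makes $V$ a genuine Lyapunov function and lets both positivity and global existence fall out of a single sign inequality.
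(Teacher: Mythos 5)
Your proposal is correct and is essentially the paper's own argument: your $V(t)$ is exactly the paper's Lyapunov function $\tilde{E}(y,z)=\frac{z^2}{2}-\frac{\beta(t)}{2}y^2+\frac{y^{p+1}}{p+1}$ with $\beta(t)=1+m^2e^{2mt}$, you compute the same dissipation identity $V'=-\alpha (y')^2-m^3e^{2mt}y^2\le 0$, and you use the same limit value $-\frac12+\frac{1}{p+1}<0$ at $t=-\infty$ to deduce positivity and the a priori bounds on $y^*$ and $(y^*)'$ that give global continuation. The only point you flag, that $(y^*)'(t)\to 0$ as $t\to-\infty$, is likewise taken for granted in the paper (it writes $(y(-\infty),z(-\infty))=(1,0)$) and is justified exactly as you indicate, by differentiating the integral representations behind Lemma~\ref{S41L1}.
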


\begin{proof}
In the proof we denote $y^*$ by $y$ for simplicity.
We extend the domain of $y(t)$.
The equation (\ref{S4E1}) is equivalent to
\begin{equation}\label{S42E1}
\begin{cases}
y'=z, & \\
z'=-\alpha z+y-y^p+m^2e^{2mt}y. &
\end{cases}
\end{equation}
Then, the solution of (\ref{S4E1}) can be considered as the orbit $(y(t),z(t))$ of (\ref{S42E1}).
Let
\[
\tilde{E}(y,z):=\frac{z^2}{2}-\frac{\beta(t)}{2}y^2+\frac{y^{p+1}}{p+1},
\]
where $\beta(t):=1+m^2e^{2mt}$. Then
\begin{equation}\label{S42E3}
\frac{d}{dt}\tilde{E}(y(t),z(t))=-\alpha z^2-m^3e^{2mt}y^2\le 0.
\end{equation}
Since $(y(-\infty),z(-\infty))=(1,0)$, $\tilde{E}(y(-\infty),z(-\infty))=-1/2+1/(p+1)<0$.
Because of (\ref{S42E3}), $(y(t),z(t))\in \{(y,z);\ \tilde{E}(y,z)<0\}$. Since $-\beta(t)y^2/2+y^{p+1}/(p+1)<0$,
\begin{equation}\label{S42E4}
0<y(t)<\left(\frac{p+1}{2}\beta(t)\right)^{\frac{1}{p-1}}\ \ (-\infty<t<\infty).
\end{equation}
Therefore, $y(t)>0$ $(-\infty<t<\infty)$. Since $\tilde{E}(y,z)<0$, $z^2<\beta(t)y^2-2y^{p+1}/(p+1)$, hence
\begin{equation}\label{S42E5}
0<z(t)<\left(\frac{p-1}{p+1}\beta(t)^{1+\theta}\right)^{\frac{1}{2}}\ \ (-\infty<t<\infty).
\end{equation}
Because of (\ref{S42E4}) and (\ref{S42E5}), neither $y(t)$ nor $z(t)$ blows up. Hence, the solution $(y(t),z(t))$ of (\ref{S42E1}) can be extended to $t\in (-\infty,\infty)$.
Because of (\ref{sing}), the conclusion about $u^*(s)$ also holds.
\end{proof}

\subsection{Nonexistence of the entire singular solution}
\begin{lemma}\label{S42+L1}
Let $y^*(t)$ be as in Lemma~\ref{S41L0}. Then,
\begin{equation}\label{S42+L1E0}
(y^*)'(t)=\frac{m}{2(N-1)-3\theta}e^{2mt}+O(e^{2m(1+\nu)t})\quad (t\rightarrow -\infty),
\end{equation}
where $\nu=\min\{p-1,1\}$.
\end{lemma}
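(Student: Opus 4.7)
The strategy is to refine the bound $y^*(t)=1+O(e^{2mt})$ from Lemma~\ref{S41L1} by extracting the leading-order coefficient explicitly and then to convert the refined bound on $y^*$ into the asserted bound on $(y^*)'$. I will work in the $\zeta=-t$ variable used in the proof of Lemma~\ref{S41L1} and set $\eta(\zeta):=y^*(-\zeta)-1$, so that $\eta$ satisfies the linear equation $\eta''-\alpha\eta'+(p-1)\eta=g(\zeta)$ with forcing $g(\zeta)=m^2 e^{-2m\zeta}(1+\eta(\zeta))-\varphi(\eta(\zeta))$ and $\varphi(\eta)=(1+\eta)^p-1-p\eta$, exactly as in (\ref{S41L1E1})--(\ref{S41L1E3}).

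First I extract the leading part. The ansatz $\eta_1(\zeta):=c_0 e^{-2m\zeta}$ with
\[
c_0:=\frac{m^2}{4m^2+2m\alpha+(p-1)}
\]
is a particular solution of $\eta_1''-\alpha\eta_1'+(p-1)\eta_1=m^2 e^{-2m\zeta}$. Using $m^2=1/(\theta(N-2-\theta))$, $\alpha=m(N-2-2\theta)$, and $(p-1)\theta=2$, a direct calculation reduces the denominator to $2(2(N-1)-3\theta)/(\theta(N-2-\theta))$, yielding $c_0=1/(2(2(N-1)-3\theta))$ and hence $2mc_0=m/(2(N-1)-3\theta)$, which is precisely the coefficient asserted in (\ref{S42+L1E0}).

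Next, set $R(\zeta):=\eta(\zeta)-\eta_1(\zeta)$. It satisfies
\[
R''-\alpha R'+(p-1)R=m^2 e^{-2m\zeta}\eta(\zeta)-\varphi(\eta(\zeta)).
\]
Since $\eta=O(e^{-2m\zeta})$ by Lemma~\ref{S41L1} and $|\varphi(\eta)|\le C|\eta|^{1+\nu}$ with $\nu=\min\{p-1,1\}$ for small $|\eta|$ (from $\varphi(\eta)=\tfrac{p(p-1)}{2}\eta^2+O(\eta^3)$ together with boundedness of $\eta$), the right-hand side is bounded by $Ce^{-2m(1+\nu)\zeta}$. Plugging this bound into the variation-of-parameters integral formula for $R$ — namely (\ref{S41L1E4}), (\ref{S41L1E11}), or (\ref{S41L1E16}) depending on which of the three cases in (\ref{S41L1E3+}) holds — and repeating the integration-by-parts estimate that led to (\ref{S41L1E10}) and (\ref{S41L1E15}), one obtains $R(\zeta)=O(e^{-2m(1+\nu)\zeta})$. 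Differentiating the integral formula in $\zeta$ (which produces the same oscillatory or hyperbolic kernel multiplied by the same forcing) and repeating the estimates gives $R'(\zeta)=O(e^{-2m(1+\nu)\zeta})$. Since $(y^*)'(t)=-\eta'(-t)=2mc_0\,e^{2mt}-R'(-t)$, this yields (\ref{S42+L1E0}).

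The main technical point I anticipate is the derivative estimate: a pointwise bound on $R$ does not automatically give a pointwise bound on $R'$, so one has to differentiate the integral representation and redo the exponential estimates rather than differentiating the final bound. This is essentially a replay of the computation in Lemma~\ref{S41L1} with the improved forcing bound $O(e^{-2m(1+\nu)\zeta})$ in place of $O(e^{-2m\zeta})$, but the three cases in (\ref{S41L1E3+}) must be treated separately because the Green's kernel changes form.
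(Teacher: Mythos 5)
Your proposal is correct, and the coefficient computation checks out: with $m^2=1/(\theta(N-2-\theta))$, $\alpha=m(N-2-2\theta)$ and $p-1=2/\theta$, the denominator $4m^2+2m\alpha+(p-1)$ indeed equals $2\bigl(2(N-1)-3\theta\bigr)/(\theta(N-2-\theta))$, so $2mc_0=m/(2(N-1)-3\theta)$, which is exactly the constant the paper obtains in the form $2m^3/\bigl((\tfrac{\alpha}{2}+2m)^2+\beta^2\bigr)$. The difference from the paper lies in how that constant is extracted. The paper does not decompose $\eta$: it differentiates the variation-of-parameters formula (\ref{S41L1E4}) directly, inserts $g(\zeta)=m^2e^{-2m\zeta}+O(e^{-2m(1+\nu)\zeta})$, and evaluates the resulting sine/cosine (resp.\ sinh/cosh, resp.\ polynomial) integrals explicitly to read off the leading term of $\eta'$, treating the three cases of (\ref{S41L1E3+}) separately. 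You instead peel off the explicit particular solution $\eta_1=c_0e^{-2m\zeta}$, so the coefficient comes algebraically from the characteristic polynomial evaluated at $-2m$ rather than from an explicit oscillatory integral; the price is that you must write the analogue of (\ref{S41L1E4})/(\ref{S41L1E11})/(\ref{S41L1E16}) for the remainder $R=\eta-\eta_1$ (justified exactly as in Lemma~\ref{S41L1}: the homogeneous solutions are unbounded as $\zeta\to\infty$ while $R$ decays, so no homogeneous part appears) and then redo, for $R'$, the same differentiated-kernel estimates the paper performs for $\eta'$, again case by case. So the two arguments share all the analytic machinery; yours makes the coefficient identity more transparent and isolates the explicit integral evaluations into an error estimate, while the paper's is slightly shorter because it never introduces $R$. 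Your correctly flagged point that one must differentiate the integral representation (no boundary term arises since the kernels vanish at $\sigma=\zeta$) rather than the pointwise bound is precisely the step the paper carries out in (\ref{S42+L1E3}).
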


\begin{proof}
Let $\zeta:=-t$ and $\eta(\zeta)=y(t)-1$. By (\ref{S41L1E3}) and (\ref{S41L1E10}) we have
\begin{equation}\label{S42+L1E1}
|\varphi(\eta)|\le C_1e^{-2m(1+\nu)\zeta}.
\end{equation}
Because of (\ref{S41L1E3}) and (\ref{S42+L1E1}), we have
\begin{equation}\label{S42+L1E2}
g(\zeta)=m^2e^{-2m\zeta}+O(e^{-2m(1+\nu)\zeta}).
\end{equation}
There are three cases (\ref{S41L1E3+}) as in the proof of Lemma~\ref{S41L1}.
We consider only the case (1).

Differentiating (\ref{S41L1E4}) with respect to $\zeta$, we have
\begin{multline}\label{S42+L1E3}
\eta'(\zeta)=\frac{\alpha}{2\beta}e^{\frac{\alpha}{2}\zeta}\int_{\zeta}^{\infty}e^{-\frac{\alpha}{2}\sigma}\sin (\beta(\sigma-\zeta))g(\sigma)d\sigma\\
-e^{\frac{\alpha}{2}\zeta}\int_{\zeta}^{\infty}e^{-\frac{\alpha}{2}\sigma}\cos(\beta(\sigma-\zeta))g(\sigma)d\sigma.
\end{multline}
By (\ref{S42+L1E2}) and (\ref{S42+L1E3}) we have
\begin{align*}
\eta'(\zeta) &=\frac{\alpha m^2}{2\beta}e^{\frac{\alpha}{2}\zeta}\int_{\zeta}^{\infty}e^{-\left(\frac{\alpha}{2}+2m\right)\sigma}\sin (\beta(\sigma-\zeta))d\sigma\\
&\quad -m^2e^{\frac{\alpha}{2}\zeta}\int_{\zeta}^{\infty}e^{-\left(\frac{\alpha}{2}+2m\right)\sigma}\cos(\beta(\sigma-\zeta))d\sigma +O(e^{-2m(1+\nu)\zeta})\\
&=\frac{-2m^3}{\left(\frac{\alpha}{2}+2m\right)^2+\beta^2}e^{-2m\zeta}+O(e^{-2m(1+\nu)\zeta})\\
&=-\frac{m}{2(N-1)-3\theta}e^{-2m\zeta}+O(e^{-2m(1+\nu)\zeta}).
\end{align*}
Since $(y^*)'(t)=-\eta'(\zeta)$, we obtain (\ref{S42+L1E0}).
In the other cases (2) and (3) we also obtain (\ref{S42+L1E0}), using (\ref{S41L1E11}) and (\ref{S41L1E16}).
\end{proof}

\begin{corollary}\label{S42+C1}
Suppose that $p>p_S$.
Let $\delta>0$. Then, $u^*\in H^1(B_{\delta})$, and
\begin{equation}\label{S42+C1E1}
(u^*)'(s)=-\theta A s^{-\theta-1}(1+o(1))\quad (s\downarrow 0).
\end{equation}
\end{corollary}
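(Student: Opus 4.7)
The proof should follow almost immediately by differentiating the definition (\ref{sing}) and inserting the asymptotic expansions of $y^*$ and $(y^*)'$ that were just established in Lemmas~\ref{S41L1} and \ref{S42+L1}.

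The plan is as follows. Starting from
\[
u^*(s) = A s^{-\theta} y^*(m^{-1}\log s),
\]
I would differentiate in $s$ via the chain rule to obtain
\[
(u^*)'(s) = -\theta A s^{-\theta-1} y^*(m^{-1}\log s) + \frac{A}{m} s^{-\theta-1} (y^*)'(m^{-1}\log s).
\]
Now set $t = m^{-1}\log s$, so that $t \to -\infty$ as $s \downarrow 0$ and $e^{2mt} = s^2$. By Lemma~\ref{S41L1}, $y^*(t) = 1 + O(e^{2mt}) = 1 + O(s^2)$, and by Lemma~\ref{S42+L1}, $(y^*)'(t) = O(e^{2mt}) = O(s^2)$. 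Substituting these into the expression above yields
\[
(u^*)'(s) = -\theta A s^{-\theta-1}\bigl(1 + O(s^2)\bigr) + s^{-\theta-1}\cdot O(s^2) = -\theta A s^{-\theta-1}(1+o(1)),
\]
which is (\ref{S42+C1E1}).

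For the $H^1$ claim, since $u^*$ is radial I would reduce $\|u^*\|_{H^1(B_\delta)}^2$ to the one-dimensional integral
\[
c_N\int_0^\delta\bigl(|u^*(s)|^2 + |(u^*)'(s)|^2\bigr) s^{N-1}\,ds.
\]
Using Corollary~\ref{S41C1} and (\ref{S42+C1E1}), the integrand is dominated near $s=0$ by a constant multiple of $s^{N-1-2\theta} + s^{N-3-2\theta}$, so convergence reduces to $N-2-2\theta>0$, i.e.\ $\theta < (N-2)/2$. Since $p>p_S$ gives $\theta = 2/(p-1) < 2/(p_S-1) = (N-2)/2$, both integrals converge and $u^*\in H^1(B_\delta)$.

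There is no real obstacle here: the hard analytic work was already done in establishing the sharp asymptotic $(y^*)'(t) = \frac{m}{2(N-1)-3\theta}e^{2mt}+O(e^{2m(1+\nu)t})$ in Lemma~\ref{S42+L1}. In fact only the much weaker bound $(y^*)'(t)=O(e^{2mt})$ is needed for this corollary; the full sharp form simply makes the leading coefficient of the error explicit. The role of the supercritical hypothesis $p>p_S$ is limited to the $H^1$ integrability at the origin.
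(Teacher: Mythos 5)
Your proposal is correct and follows essentially the same route as the paper: differentiate (\ref{sing}), insert the asymptotics of $y^*$ and $(y^*)'$ from Lemmas~\ref{S41L1} and \ref{S42+L1} to get (\ref{S42+C1E1}), then bound the radial $H^1$ integrand by $C_0s^{N-1-2\theta}+C_1s^{N-3-2\theta}$ and use $p>p_S$ (i.e.\ $\theta<(N-2)/2$) for integrability. Your side remark that only the crude bound $(y^*)'(t)=O(e^{2mt})$ is needed here is also accurate.
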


\begin{proof}
Differentiating (\ref{sing}) with respect to $s$, we have
\begin{align*}
(u^*)'(s)&=-\theta As^{-\theta-1}y^*(\frac{1}{m}\log s)+As^{-\theta}(y^*)'(\frac{1}{m}\log s)\frac{1}{s}\\
&=-\theta As^{-\theta-1}(1+o(1))+As^{-\theta-1}o(s)\\
&=-\theta As^{-\theta-1}(1+o(1)),
\end{align*}
where we use Lemma~\ref{S42+L1}.
Then, we obtain (\ref{S42+C1E1}).
We show that
\[
\int_0^{\delta}\left\{(u^*)^2+((u^*)')^2\right\}s^{N-1}ds<\infty.
\]
We have
\begin{align*}
\left\{ u^*(s)^2+((u^*)'(s))^2\right\}s^{N-1}&\le\left( C_0s^{-2\theta}+C_1s^{-2\theta-2}\right)s^{N-1}\\
&=C_0s^{-2\theta+N-1}+C_1s^{-2\theta-2+N-1}.
\end{align*}
Since $p>p_S$, $-2\theta-2+N-1>-1$ and $-2\theta+N-1>-1$. Hence, $\int_0^{\delta}(C_0s^{-2\theta+N-1}+C_1s^{-2\theta-2+N-1})ds<\infty$, which indicates that $u^*\in H^1(B_{\delta})$.
\end{proof}

\begin{lemma}\label{S42+L2}
Suppose that $p>p_S$. The problem
\begin{equation}\label{S42+L3E0}
\begin{cases}
u''+\frac{N-1}{s}u'+f(u)=0, & 0<s<\infty,\\
u(s)=As^{-\theta}(1+o(1))\quad (s\downarrow 0),\\
u(\infty)=0,\\
u>0, & 0<s<\infty
\end{cases}
\end{equation}
does not admit a solution.
\end{lemma}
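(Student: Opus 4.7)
The plan is to apply the Pohozaev identity of Proposition~\ref{S22P1} on $[\varepsilon, R]$, let $\varepsilon \downarrow 0$ and $R \uparrow \infty$, and show that every boundary term vanishes in the limit. Since $p > p_S$, the two interior coefficients
\[
C_1 := \frac{N}{2} - 1 - \frac{N}{p+1} = \frac{N(p-1) - 2(p+1)}{2(p+1)}, \qquad C_2 := \frac{N(p-1)}{2(p+1)}
\]
are both strictly positive, so the identity collapses in the limit to
\[
0 = C_1 \int_0^\infty s^{N-1}(u')^2\,ds + C_2 \int_0^\infty s^{N-1} u^2\,ds,
\]
which forces $u \equiv 0$ and contradicts $u > 0$.

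To handle the boundary at $s = \varepsilon$ I need an asymptotic for $u'(s)$ to complement $u(s) = As^{-\theta}(1 + o(1))$. The cleanest route is uniqueness: under the change of variables $y(t) = A^{-1} s^\theta u(s)$, $t = m^{-1}\log s$, the prescribed asymptotic becomes $y(t) \to 1$ as $t \to -\infty$, and the uniqueness clause of Lemma~\ref{S41L0} together with local uniqueness for the ODE forces $u \equiv u^*$ on all of $(0,\infty)$. Corollary~\ref{S42+C1} then supplies $u'(\varepsilon) = -\theta A\,\varepsilon^{-\theta-1}(1+o(1))$. Substituting, the four $\varepsilon$-boundary terms in Proposition~\ref{S22P1} have leading orders $\varepsilon^{N-2-2\theta}$, $\varepsilon^{N-2-2\theta}$, $\varepsilon^{N-2\theta}$, and $\varepsilon^{N - \theta(p+1)}$; the condition $p > p_S$ is equivalent to $\theta < (N-2)/2$ and to $N > \theta(p+1)$, so each of these exponents is strictly positive and each term tends to $0$.

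For the boundary at $s = R$ I use the hypothesis $u(\infty) = 0$. The energy
\[
E(s) := \frac{1}{2}(u')^2 - \frac{1}{2}u^2 + \frac{1}{p+1}u^{p+1}
\]
satisfies $E'(s) = -\frac{N-1}{s}(u')^2 \le 0$, hence is monotone nonincreasing; since $u \to 0$ makes the nonderivative part of $E$ vanish, the finite limit $E(\infty) = \lim \tfrac{1}{2}(u')^2$ must be $0$ (a nonzero value would force $u$ to grow linearly, contradicting $u(\infty) = 0$). To upgrade $u, u' \to 0$ to exponential decay I set $v(s) := s^{(N-1)/2} u(s)$, which satisfies $v'' = (1 + O(s^{-2}))\,v - s^{-(N-1)(p-1)/2}\,v^p$; since $v \to 0$ the nonlinear term is negligible for large $s$, and a comparison argument sandwiches $|v(s)|$ between constant multiples of $e^{-(1-\delta)s}$, giving $u(s), u'(s) = O(e^{-(1-\delta)s})$ for every $\delta \in (0,1)$. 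Every $R$-boundary term in Proposition~\ref{S22P1} is a polynomially weighted product of $u(R)$ and $u'(R)$, so all four vanish as $R \to \infty$.

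The main difficulty will be the quantitative exponential decay at infinity: the pure energy argument yields only $u, u' \to 0$, which is insufficient against the factors $R^{N-1}$ and $R^{N}$ multiplying $u(R)^j (u'(R))^k$. The Liouville substitution $v = s^{(N-1)/2} u$ reduces the task to a small perturbation of $v'' = v$, but one has to verify that the nonlinear perturbation $s^{-(N-1)(p-1)/2} v^p$ can be absorbed into the linear decay via Gronwall or a contraction mapping; this is standard but is the only nontrivial technical input. Once exponential decay is secured, the argument reduces to a direct comparison of exponents and the positivity of $C_1$ and $C_2$, which immediately yields the contradiction.
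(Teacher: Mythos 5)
Your proposal is correct and follows essentially the same route as the paper: identify the solution with $u^*$ via the uniqueness in Lemma~\ref{S41L0}, use the asymptotics (\ref{S41C1E1}) and (\ref{S42+C1E1}) to kill the $\e$-boundary terms and exponential decay (which the paper asserts in one line from $f'(0)=-1<0$, and which you justify in more detail) to kill the $R$-boundary terms, and then conclude from the Pohozaev identity since both interior coefficients are positive for $p>p_S$. The only loose point is the phrase ``since $v\to 0$ the nonlinear term is negligible'': $v\to 0$ is not known a priori, but the smallness you need follows directly from $u^{p-1}\to 0$, after which the comparison argument does yield $v\to 0$ and the exponential bound, so the gap is cosmetic.
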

This lemma was proved by \cite{NS86}. See \cite{CCCT10} for details of nonexistence results.
However, we briefly prove the lemma.
\begin{proof}
Suppose that (\ref{S42+L3E0}) has a solution.
Because $u(s)=As^{-\theta}(1+o(1))$ $(s\downarrow 0)$ and the uniqueness of the solution $u^*(s)$ obtained by Lemma~\ref{S41L0}, the solution of (\ref{S42+L3E0}) is $u^*(s)$.
We denote $u^*$ by $u$ for simplicity.
Because $f'(0)=-1<0$, $u(s)$ and $u'(s)$ decay exponentially as $s\rightarrow\infty$. Thus, the four terms $R^{N-1}u(R)u'(R)$, $R^Nu'(R)^2$, $R^Nu(R)^2$, and $R^Nu(R)^{p+1}$ converge to zero as $R\rightarrow\infty$.
Because of (\ref{S41C1E1}) and (\ref{S42+C1E1}), we see that the four terms $\e^{N-1}u(\e)u'(\e)$, $\e^Nu'(\e)^2$, $\e^Nu(\e)^2$, and $\e^Nu(\e)^{p+1}$ converge to zero as $\e\downarrow 0$.
Taking the limit of (\ref{S22P1E1}) as $\e\downarrow 0$ and $R\rightarrow\infty$, we have
\[
\left(\frac{N}{2}-1-\frac{N}{p+1}\right)\int_0^{\infty}s^{N-1}(u')^2ds+\left(\frac{N}{2}-\frac{N}{p+1}\right)\int_0^{\infty}s^{N-1}u^2ds=0.
\]
Since $p>p_S$, $\frac{N}{2}-1-\frac{N}{p+1}>0$. The above equality implies that (\ref{S42+L3E0}) does not admit a solution.
\end{proof}

\subsection{Existence of critical points of $u^*(s)$}
The main technical lemma in this section is the following:
\begin{lemma}\label{S43L1}
Suppose that $p_S<p<p_{JL}$.
There is a sequence $\{ s_n^*\}_{n=0}^{\infty}$ $(0=s_0^*<s_1^*<s_2^*<\cdots\rightarrow \infty)$ such that $(u^*)'(s_n^*)=0$ $(n\ge 1)$,
\[
u^*(s)
\begin{cases}
<1 & (s\in\{ s_1^*,s_3^*,\cdots\}),\\
>1 & (s\in\{ s_2^*,s_4^*,\cdots\})
\end{cases}
\]
and
\[
(u^*)'(s)
\begin{cases}
<0 & (s\in (s_0^*,s_1^*)\cup(s_2^*,s_3^*)\cup\cdots),\\
>0 & (s\in (s_1^*,s_2^*)\cup(s_3^*,s_4^*)\cup\cdots).
\end{cases}
\]
\end{lemma}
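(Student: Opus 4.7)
The plan is to combine an energy bound, a Sturm oscillation argument, and a sign analysis at critical points driven directly by the ODE. First, introduce the Lyapunov function $E(s):=\tfrac{1}{2}((u^*)'(s))^2+F(u^*(s))$ with $F(u):=-u^2/2+u^{p+1}/(p+1)$; it satisfies $E'(s)=-\tfrac{N-1}{s}((u^*)'(s))^2\le 0$. Since $E(1)<\infty$ by Lemma~\ref{S42L1} and $F(u)\to\infty$ as $u\to\infty$, both $u^*$ and $(u^*)'$ are bounded on $[1,\infty)$, say $u^*\le M$.

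The crux is to prove that $w:=u^*-1$ changes sign infinitely many times. Writing $c(s):=((u^*(s))^p-u^*(s))/(u^*(s)-1)$, the function $w$ satisfies $w''+\tfrac{N-1}{s}w'+c(s)w=0$ with $c(s)>0$ whenever $u^*(s)\ne 1$. Assume for contradiction that $w$ has constant sign on $[s_0,\infty)$. If $w>0$, then $u^*\in[1,M]$ makes $c(s)\ge\delta>0$ uniformly (since $c$ extends continuously with $c(1)=p-1$), and the substitution $\tilde w:=s^{(N-1)/2}w$ gives $\tilde w''+\bigl(c(s)-\tfrac{(N-1)(N-3)}{4s^2}\bigr)\tilde w=0$; Sturm comparison with $v''+(\delta/2)v=0$ makes $\tilde w$, hence $w$, oscillate, a contradiction. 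If $w<0$, then at any critical point $s_c\ge s_0$ the ODE gives $(u^*)''(s_c)=u^*(s_c)-u^*(s_c)^p>0$, so every critical point of $u^*$ in $[s_0,\infty)$ is a strict local minimum; since two local minima cannot appear in the same sign-region without an intervening local maximum, $u^*$ is eventually monotone. Monotonicity and boundedness yield $\int_{s_0}^\infty|(u^*)'|\,ds<\infty$, while the ODE bounds $(u^*)''$, so a Barbalat-type argument gives $(u^*)'(s)\to 0$; the ODE then forces $u^*(\infty)\in\{0,1\}$. The limit $u^*\downarrow 0$ is excluded by Lemma~\ref{S42+L2}, and $u^*\uparrow 1$ returns to the Sturm argument above via $c(s)\to p-1$.

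Once infinitely many sign changes $\sigma_1<\sigma_2<\cdots$ of $w$ are in hand, the sign of $(u^*)''$ at critical points (still dictated by the ODE) forbids two successive maxima or two successive minima inside a single sign-region, so exactly one critical point lies in each $(\sigma_k,\sigma_{k+1})$, with critical values alternating strictly between $<1$ and $>1$. For the first critical point $s_1^*$, Corollary~\ref{S42+C1} gives $(u^*)'(s)\sim-\theta As^{-\theta-1}<0$ near $s=0$, so $u^*$ is strictly decreasing on $(0,s_1^*)$; this precludes $u^*(s_1^*)>1$ (which would force $s_1^*$ to be a local maximum via $(u^*)''(s_1^*)<0$), and $u^*(s_1^*)=1$ is excluded by ODE uniqueness; hence $u^*(s_1^*)<1$ and the alternation yields the stated pattern. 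Finally $s_n^*\to\infty$, for a finite accumulation point would force $(u^*)'(s_\infty)=0$ together with $u^*(s_\infty)=1$ by continuity and alternation of values, again violating ODE uniqueness. The principal obstacle is excluding $u^*\downarrow 0$ in the $w<0$ branch, where the supercritical nonexistence result Lemma~\ref{S42+L2}---ultimately the Pohozaev identity---is indispensable.
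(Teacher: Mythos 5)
Your proof is correct and uses essentially the same machinery as the paper's: the decreasing energy $E$, Sturm oscillation for $w=u^*-1$, exclusion of the decaying alternative via the Pohozaev-based nonexistence Lemma~\ref{S42+L2}, and the sign of $(u^*)''$ dictated by the ODE to classify critical points and obtain the alternation. The difference is only organizational—the paper first produces $s_1^*$ by the same ``limit must be $0$ or $1$'' contradiction that you run in your $w<0$ case and then uses the energy lower bound $u^*(s)>u^*(s_1^*)$ for $s>s_1^*$ to make a single Sturm argument, whereas you run a dichotomy on the eventual sign of $u^*-1$—so the substance coincides.
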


\begin{proof}
First, we show that $u^*$ has the first positive critical point.
We use a contradiction argument. Suppose the contrary, i.e., 
\[
(u^*)'(s)<0\ \ (0<s<\infty).
\]
Since $u^*(s)>0$ $(0<s<\infty)$, $u^*(s)$ converges as $s\rightarrow\infty$.
Since $(u^*)'$ does not change sign, $(u^*)'(s)\rightarrow 0$ $(s\rightarrow\infty)$.
We see by (\ref{S1E7}) that $(u^*)''(s)$ converges as $s\rightarrow\infty$.
Since $u^*(s)$ converges, $(u^*)''(s)\rightarrow 0$ $(s\rightarrow\infty)$.
Therefore, $f(u^*(\infty))=0$, and $u^*(\infty)=0$ or $1$.
If $u^*(\infty)=1$, then by the same argument as in the proof of Lemma~\ref{BDDB} we see that $u^*$ should oscillate around $1$. This oscillation contradicts that $(u^*)'(s)<0$ $(0<s<\infty)$.
Thus, $u^*(\infty)=0$.
We see that $u^*$ is a solution of (\ref{S42+L3E0}), which contradicts Lemma~\ref{S42+L2}. 
Therefore, there is $s_1^*>0$ such that $(u^*)'(s)<0$ $(0<s<s_1^*)$ and $(u^*)'(s_1^*)=0$.

Second, we show that there exists $s_n^*$ ($n\ge 2$).
Let $v^*:=(u^*)'$, and let $E$ be as given in (\ref{S2E22++}).
Then
\[
\frac{d}{ds}E(u^*(s),v^*(s))=-\frac{N-1}{s}(v^*(s))^2<0.
\]
Therefore, $E(u^*(s),v^*(s))<E(u^*(s_1^*),0)<0$ $(s>s_1^*)$. This implies that
\begin{equation}\label{S43L1E1}
u^*(s)>u^*(s_1^*)\quad (s>s_1^*).
\end{equation}
Let $w(s):=u^*(s)-1$. Then $w$ satisfies (\ref{BDDBE2+}).
Because of (\ref{S43L1E1}), $\{(w+1)^p-(w+1)\}/w>\{(u^*(s_1^*))^p-u^*(s_1^*)\}/(u^*(s_1^*)-1)>0$.
Sturm's oscillation theorem says that $w$ oscillates around $0$ infinitely many times as $s\rightarrow\infty$.
Consequently, $u^*$ has infinitely many critical points.
Because $u^*$ satisfies the ODE, we easily see that $u^*$ does not have a critical point on $\{u^*=1\}$ and that if $u^*$ has a critical point on $\{ u^*>1\}$ (resp. $\{ u^*<1\}$), then it is a local maximum point (resp. local minimum point).
Thus, the conclusion of the lemma holds.
\end{proof}

\subsection{Proof of Theorem~\ref{Th1}}
Since the convergence property in Theorem~\ref{Th0} is proved in Section~5, the proof of Theorem~\ref{Th0} is postponed until Section~5.
Here we prove only Theorem~\ref{Th1}.
\begin{proof}[Proof of Theorem~\ref{Th1}]
Let $\lambda^*_n:=(s_n^*)^2$. Then, $(\lambda_n^*,u^*(s))$ is a singular solution of (\ref{S1E7}) which satisfies (\ref{S41C1E1}).
We define $U_n^*(r):=u^*(\sqrt{\lambda_n^*}r)$.
Then by Corollary~\ref{S41C1} and Lemmas \ref{S42L1} and \ref{S43L1} we see that $U_n^*$ satisfies (i), (ii), and (iii).
It follows from Corollary~\ref{S42+C1} that $U^*_n\in H^1(B)$.
The uniqueness of $(\lambda_n^*,U_n^*)$ follows from the uniqueness of the solution of (\ref{S41L0E0}) which was shown in Lemma~\ref{S41L0}.
\end{proof}

% ****************************************************************************
% ****************************************************************************
% ****************************************************************************
% Section 5
% ****************************************************************************
% ****************************************************************************
% ****************************************************************************
\section{Convergence to the singular solution as $\gamma\rightarrow\infty$}
Let $u(s,\gamma)$ denote the solution of (\ref{S3E2}). Note that $u(0,\gamma)=\gamma$.
Let $y(t)=A(p,N)^{-1}s^{\theta}u(s,\gamma)$ and $t=m^{-1}\log s$.
Then $y(t)$ satisfies (\ref{S4E1}).
Let $n\in\N$ be fixed.
Let $\lambda_n^*$ be the number given in Theorem~\ref{Th1}, and let $y^*(t)$ be the singular solution given in Lemma~\ref{S41L0}.
Then by Lemma~\ref{S42L1} we see that $y(t)$ can be defined in $\R$.
We set $t_n^*:=m^{-1}\log\sqrt{\lambda_{n+1}^*}$.
\subsection{Convergence to $u^*$}
Our goal in this section is to prove the following:
\begin{theorem}\label{S5T1}
Let $n\ge 1 $ be fixed.
Let $r_n^*:=e^{mt_n^*}$.
Suppose that $\{\gamma_j\}_{j=1}^{\infty}$ is an arbitrary sequence diverging to $\infty$. Let $u^*(s)$ be the singular solution defined by (\ref{sing}), and let $u_j(s):=u(s,\gamma_j)$ be the solution of (\ref{S3E2}).
Then, as $j\rightarrow\infty$,
\[
u_j\rightarrow u^*,\quad u'_j\rightarrow (u^*)',\quad\textrm{and}\quad u''_j\rightarrow (u^*)''\quad\textrm{in}\quad C^0_{loc}(0,r^*_n].
\]
\end{theorem}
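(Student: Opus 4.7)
The plan is to switch to the logarithmic variables of Section~4 via $t:=m^{-1}\log s$ and $y_j(t):=A^{-1}e^{m\theta t}u_j(e^{mt})$, so that each $y_j$ satisfies the nonautonomous ODE \eqref{S4E1}, and the three claimed $C^0_{loc}(0,r_n^*]$ convergences reduce (via the chain rule) to $y_j\to y^*$ in $C^2_{loc}((-\infty,t_n^*])$, where $y^*$ is the unique solution from Lemma~\ref{S41L0}. Because $u_j(0)=\gamma_j<\infty$, a Taylor expansion at $s=0$ gives $y_j(t)\sim A^{-1}\gamma_j e^{m\theta t}$ and $y_j'(t)\sim A^{-1}m\theta\gamma_j e^{m\theta t}$ as $t\to-\infty$, so each $y_j$ lies on the unstable manifold of the saddle $(0,0)$ of the autonomous limit $y''+\alpha y'-y+y^p=0$.

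First I would establish $j$-uniform bounds via the Lyapunov function $\tilde E(y,z,t):=\tfrac{1}{2}z^2-\tfrac{1}{2}\beta(t)y^2+\tfrac{1}{p+1}y^{p+1}$, $\beta(t):=1+m^2e^{2mt}$, of Lemma~\ref{S42L1}, which satisfies $\tfrac{d}{dt}\tilde E(y_j,y_j')=-\alpha(y_j')^2-m^3e^{2mt}y_j^2\le 0$. Substituting the asymptotics and using $m^2\theta^2=\theta/(N-2-\theta)<1$ (valid precisely because $p>p_S$), one finds $\tilde E(y_j(t),y_j'(t))\to 0^-$ as $t\to-\infty$; monotonicity then gives $\tilde E\le 0$ on $\R$, which yields the $j$-independent pointwise bounds $y_j(t)\le(\tfrac{p+1}{2}\beta(t))^{1/(p-1)}$ and $|y_j'(t)|\le\sqrt{\beta(t)}\,y_j(t)$. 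Bounding $y_j''$ locally by the ODE and invoking Arzel\`a--Ascoli with a diagonal extraction produces a subsequence (still denoted $y_j$) converging in $C^2_{loc}((-\infty,t_n^*])$ to a solution $\tilde y$ of \eqref{S4E1}.

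To identify $\tilde y=y^*$ I would couple this with the blow-up scaling of Proposition~\ref{S2P1}: the rescaling $\tilde u_j(\rho):=u_j(s)/\gamma_j$, $\rho:=\gamma_j^{(p-1)/2}s$, converges in $C^2_{loc}([0,\infty))$ to the heteroclinic orbit $\bar u(\,\cdot\,,1)$ of \eqref{S1E7+}, and its transform $\bar y(\tau):=A^{-1}e^{m\theta\tau}\bar u(e^{m\tau},1)$ solves the autonomous limit ODE with $(\bar y,\bar y')(\tau)\to(1,0)$ as $\tau\to\infty$. Setting $t_j:=-\theta^{-1}m^{-1}\log\gamma_j\to-\infty$, one has the identity $y_j(t_j+\tau)=A^{-1}e^{m\theta\tau}\tilde u_j(e^{m\tau})$, whence $(y_j,y_j')(t_j+\tau)\to(\bar y,\bar y')(\tau)$ locally uniformly in $\tau$. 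Given $\e>0$, pick $\tau_0$ so large that $E(\bar y(\tau_0),\bar y'(\tau_0))<-\tfrac{p-1}{2(p+1)}+\tfrac{\e}{2}$, where $E$ is the autonomous Lyapunov function of \eqref{S2E22++}; since $\beta(t_j+\tau_0)\to 1$, the value of $\tilde E$ at $(y_j,y_j')(t_j+\tau_0)$ becomes $<-\tfrac{p-1}{2(p+1)}+\e$ for $j$ large. Because $\tilde E$ is monotone decreasing along $y_j$ and $t_j+\tau_0\to-\infty$, for any fixed $t$ and all sufficiently large $j$ the same bound holds at time $t$; passing to the limit $j\to\infty$ and then $\e\to 0$ gives $\tilde E(\tilde y(t),\tilde y'(t),t)\le-\tfrac{p-1}{2(p+1)}$ on $(-\infty,t_n^*]$. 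A LaSalle-type argument for the autonomous limit equation (whose only equilibria are $(0,0)$, with $\tilde E$-value $0$, and $(1,0)$, with $\tilde E$-value $-\tfrac{p-1}{2(p+1)}$) together with this bound forces the $\alpha$-limit of $(\tilde y,\tilde y')$ at $-\infty$ to be $(1,0)$. Uniqueness in Lemma~\ref{S41L0} then yields $\tilde y=y^*$, and since the subsequence was arbitrary the entire sequence converges.

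The main obstacle is the matching in the identification step: the blow-up convergence $\tilde u_j\to\bar u$ only controls $y_j$ at the intermediate scale $t=t_j+O(1)\to-\infty$, so propagating that information forward uniformly in $j$ to all later times $t$ depends essentially on the monotonicity of $\tilde E$ and on the drift $t_j\to-\infty$ that places every fixed $t$ eventually to the right of the intermediate time.
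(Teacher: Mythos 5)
Your proposal is correct in substance, and its skeleton is the same as the paper's: uniform $j$-independent bounds from the nonautonomous energy $\tilde E$ with $\frac{d}{dt}\tilde E=-\alpha z^2-m^3e^{2mt}y^2\le 0$ (this is exactly Lemma~\ref{S5L4}, via (\ref{S42E3})), Arzel\`a--Ascoli plus a diagonal argument to get a subsequential limit $\tilde y$ solving (\ref{S4E1}) on $(-\infty,t_n^*]$, identification of $\tilde y$ through its behavior as $t\rightarrow-\infty$, and then the uniqueness of Lemma~\ref{S41L0} to conclude $\tilde y\equiv y^*$ and upgrade to the whole sequence. Where you genuinely deviate is in how the near-$(1,0)$ information is propagated: the paper proves Lemma~\ref{S5L3}, a Gronwall-type estimate showing that once the orbit enters the sublevel set $\Gamma_\varepsilon$ of the autonomous energy $E$ it stays in $\Gamma_{2\varepsilon}$ up to a $\gamma$-independent time $T_\varepsilon$, and then runs a contradiction argument; you instead use that $\tilde E\le E$ and that $\tilde E$ is monotone along trajectories, so the near-minimal energy value acquired at the matching time $t_j+\tau_0$ persists to every fixed later time, pass to the limit in $j$ and $\varepsilon$, and finish with a LaSalle-type argument for the asymptotically autonomous system at $t\rightarrow-\infty$. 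Your route dispenses with Lemma~\ref{S5L3} entirely and is arguably cleaner, at the price of two steps you only assert: (i) the blow-up convergence $\tilde u_j\rightarrow\bar u(\,\cdot\,,1)$ in $C^1_{loc}$, which is not contained in Proposition~\ref{S2P1} and is precisely the content of the paper's Lemma~\ref{S5L2} (your identity $y_j(t_j+\tau)=A^{-1}e^{m\theta\tau}\tilde u_j(e^{m\tau})$ shows the two formulations are the same up to a constant shift), so it must be proved by the usual uniform-bound/Arzel\`a--Ascoli/uniqueness-of-the-limit-IVP argument; and (ii) the LaSalle step, which should be spelled out: from $\tilde E(\tilde y,\tilde y',t)\le E(1,0)<0$ one gets $\tilde y$ bounded and bounded away from $0$, monotonicity gives $\int_{-\infty}^{t_n^*}(\tilde y')^2\,dt<\infty$, hence (Barbalat, using boundedness of $\tilde y''$) $\tilde y'(t)\rightarrow 0$, and then $-\tfrac12\tilde y^2+\tfrac{1}{p+1}\tilde y^{p+1}\rightarrow L\le E(1,0)$ forces $\tilde y(t)\rightarrow 1$ since $y=1$ is the strict minimizer of that profile on $\{y>0\}$. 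With these two points filled in, the argument is complete; they are routine, so there is no genuine gap, only a different (and somewhat more economical) mechanism than the paper's Lemma~\ref{S5L3}.
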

We define
\[
\tau(\gamma):=\frac{1}{m\theta}(\log\gamma-\log A).
\]
\begin{lemma}\label{S5L1}
Suppose that $\gamma>0$ is large.
Then, $y(t)$ satisfies
\[
y(t,\gamma)\le e^{m\theta(t+\tau(\gamma))}\quad (-\infty<t\le t_n^*).
\]
\end{lemma}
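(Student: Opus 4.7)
My plan is to prove the stronger pointwise bound $u(s,\gamma)\le\gamma$ for every $s\ge 0$ by means of the Lyapunov function $E$ defined in (\ref{S2E22++}), and then translate the bound through the change of variables $y(t,\gamma)=A^{-1}s^{\theta}u(s,\gamma)$, $s=e^{mt}$.

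The first step is to apply identity (\ref{BDDBE3}) along the orbit of (\ref{S3E2}):
\[
\frac{d}{ds}E\bigl(u(s,\gamma),u'(s,\gamma)\bigr)=-\frac{N-1}{s}\bigl(u'(s,\gamma)\bigr)^2\le 0,
\]
so $E$ is non-increasing along the orbit and therefore dominated by its value at $s=0$, namely $E(\gamma,0)=F(\gamma)$, where $F(u):=-u^2/2+u^{p+1}/(p+1)$. Note that $F(u)\le E(u,v)$ for every $v\in\R$, and $F'(u)=f(u)=-u+u^p>0$ on $(1,\infty)$, so $F$ is strictly increasing on $[\gamma,\infty)$ whenever $\gamma>1$, which is automatic for large $\gamma$.

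Next I would argue by contradiction. If $u(s_0,\gamma)>\gamma$ at some $s_0>0$, then combining the monotonicity of $F$ on $[\gamma,\infty)$ with the energy inequality gives
\[
F(u(s_0,\gamma))>F(\gamma)=E(\gamma,0)\ge E\bigl(u(s_0,\gamma),u'(s_0,\gamma)\bigr)\ge F(u(s_0,\gamma)),
\]
which is absurd. Hence $u(s,\gamma)\le\gamma$ for every $s\ge 0$. Substituting back,
\[
y(t,\gamma)=A^{-1}s^{\theta}u(s,\gamma)\le\frac{\gamma}{A}e^{m\theta t}=e^{m\theta\tau(\gamma)}\cdot e^{m\theta t}=e^{m\theta(t+\tau(\gamma))},
\]
where I have used $e^{m\theta\tau(\gamma)}=\gamma/A$, which is immediate from the definition of $\tau(\gamma)$. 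This inequality in fact holds for every $t\in\R$, so it holds \emph{a fortiori} on $(-\infty,t_n^*]$.

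I do not anticipate a serious obstacle: the Lyapunov function delivers the pointwise bound in one line, and the only mild subtlety is ensuring $F$ is monotone on $[\gamma,\infty)$, which is guaranteed by $\gamma>1$. In particular, the value of $t_n^*$ plays no role in the argument; the restriction to $t\le t_n^*$ is just what is needed downstream.
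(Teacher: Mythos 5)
Your proof is correct and follows essentially the same route as the paper: both use the monotonicity of the Lyapunov function $E$ from (\ref{BDDBE3}) to get $F(u(s,\gamma))\le E(u(s,\gamma),u'(s,\gamma))\le E(\gamma,0)=F(\gamma)$ and hence $u(s,\gamma)\le\gamma$, then convert to $y$ via $y=A^{-1}s^{\theta}u$ and the definition of $\tau(\gamma)$. Your contradiction step merely spells out the monotonicity of $F$ on $(1,\infty)$, which the paper leaves implicit in the phrase ``since $\gamma>0$ is large.''
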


\begin{proof}
Let $u(s):=u(s,\gamma)$ and $v(s):=u'(s)$.
Since $(u(s),v(s))$ satisfies (\ref{BDDBE3}),
\begin{equation}\label{S5L1E1}
-\frac{u(s)^2}{2}+\frac{u(s)^{p+1}}{p+1}\le E(u(s),v(s))\le E(\gamma,0)=-\frac{\gamma^2}{2}+\frac{\gamma^{p+1}}{p+1}\quad (s\ge 0).
\end{equation}
Since $\gamma>0$ is large, (\ref{S5L1E1}) tells us that $u(s)\le\gamma$ ($s\ge 0$). By the definition of $y(t)$ we have
\[
y(t)=A^{-1}s^{\theta}u(s)\le A^{-1}e^{m\theta t}\gamma=e^{m\theta(t+\tau(\gamma))}\quad (-\infty<t\le t_n^*).
\]
\end{proof}

We define
\[
\xi:=t+\tau(\gamma)\quad\textrm{and}\quad w(\xi,\gamma):=y(t,\gamma).
\]
Then $w(\xi,\gamma)$ is a solution of the problem
\begin{equation}\label{S5E1}
\begin{cases}
w''+\alpha w'-w+w^p-m^2e^{2m(\xi-\tau(\gamma))}w=0, & -\infty<\xi<t_n^*+\tau(\gamma),\\
w>0, & -\infty<\xi<t_n^*+\tau(\gamma),\\
e^{-m\theta\xi}w(\xi)\rightarrow 1\ \ (\xi\rightarrow -\infty).
\end{cases}
\end{equation}
Because of Lemma~\ref{S5L1},
\begin{equation}\label{S5E3}
w(\xi,\gamma)\le e^{m\theta\xi}\quad (-\infty<\xi\le t_n^*+\tau(\gamma)).
\end{equation}
Using the a priori estimate (\ref{S5E3}), we prove the following uniform convergence:
\begin{lemma}\label{S5L2}
Let $\bar{w}(\xi)$ be a solution of
\begin{equation}\label{S5L2E0}
\begin{cases}
w''+\alpha w'-w+w^p=0, & -\infty<\xi<t_n^*+\tau(\gamma),\\
w>0, & -\infty<\xi<t_n^*+\tau(\gamma),\\
e^{-m\theta\xi}w(\xi)\rightarrow 1\ \ (\xi\rightarrow -\infty).
\end{cases}
\end{equation}
Then, for each fixed $\bar{\xi}\in\R$,
\[
w(\xi,\gamma)\rightarrow\bar{w}(\xi)\quad\textrm{and}\quad w'(\xi,\gamma)\rightarrow\bar{w}'(\xi)\quad\textrm{in}\quad C^0(-\infty,\bar{\xi})
\]
as $\gamma\rightarrow\infty$.
\end{lemma}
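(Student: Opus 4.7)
The approach is to use the a priori bound (\ref{S5E3}) together with Arzel\`a--Ascoli to extract a locally uniformly convergent subsequence from $\{w(\cdot,\gamma)\}$, pass to the limit in the ODE (\ref{S5E1}), identify the limit as $\bar w$ by uniqueness, and finally upgrade to uniform convergence on the unbounded interval $(-\infty,\bar\xi)$ by exploiting the fact that both $w(\cdot,\gamma)$ and $\bar w$ are dominated by $e^{m\theta\xi}$, hence uniformly small near $-\infty$.

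\textbf{Steps.} Fix $\bar\xi\in\R$. For $\gamma$ sufficiently large, $\bar\xi<t_n^*+\tau(\gamma)$, so (\ref{S5E3}) gives $w(\xi,\gamma)\le e^{m\theta\bar\xi}$ on $(-\infty,\bar\xi]$. Since $\tau(\gamma)\to\infty$, the perturbation $m^2e^{2m(\xi-\tau(\gamma))}$ tends to zero uniformly on every compact subinterval, and (\ref{S5E1}) then yields uniform $C^2$ bounds on every $[-M,\bar\xi]$. By Arzel\`a--Ascoli, any sequence $\gamma_k\to\infty$ admits a subsequence such that $w(\cdot,\gamma_k)\to\tilde w$ in $C^1_{\mathrm{loc}}(\R)$; passing to the limit in (\ref{S5E1}) shows that $\tilde w$ satisfies the unperturbed equation $\tilde w''+\alpha\tilde w'-\tilde w+\tilde w^p=0$. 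To conclude $\tilde w=\bar w$ I must verify the boundary condition $e^{-m\theta\xi}\tilde w(\xi)\to 1$ as $\xi\to-\infty$, and then invoke uniqueness of (\ref{S5L2E0}), which follows from a contraction-mapping argument of exactly the same type as in Lemma~\ref{S41L0} applied to the cleaner equation without the $m^2e^{2m(\xi-\tau(\gamma))}$ term. Since every subsequence has the same limit, the full family converges.

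\textbf{Uniformity near $-\infty$ and the main obstacle.} The delicate point is that the asymptotic condition at $-\infty$ is not continuous under $C^1_{\mathrm{loc}}$ limits. I will rewrite (\ref{S5E1}) in integral form via the Green's function of the constant-coefficient operator $\partial_\xi^2+\alpha\partial_\xi-1$, following the representation (\ref{S41L1E4}) used in Lemma~\ref{S41L1}, and control the nonlinear and perturbation contributions using the uniform bound (\ref{S5E3}). This should yield an estimate
\[
\bigl|e^{-m\theta\xi}w(\xi,\gamma)-1\bigr|\le C\bigl(e^{c\xi}+e^{2m(\xi-\tau(\gamma))}\bigr)\qquad(\xi\le\bar\xi)
\]
for some $C,c>0$ independent of $\gamma$, so that the asymptotic is inherited by $\tilde w$. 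Once pointwise (in fact $C^1_{\mathrm{loc}}$) convergence to $\bar w$ is established, uniform convergence on $(-\infty,\bar\xi)$ follows by splitting: given $\e>0$, choose $M$ so large that $2e^{-m\theta M}<\e$, use Step 2 on $[-M,\bar\xi]$, and on $(-\infty,-M)$ use $w(\xi,\gamma)+\bar w(\xi)\le 2e^{m\theta\xi}<\e$. The analogous splitting handles $w'$ using the derivative estimate from the same integral representation.

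\textbf{Main obstacle.} The crux is the uniform-in-$\gamma$ integral estimate on $e^{-m\theta\xi}w(\xi,\gamma)-1$: it requires reproducing the three-case analysis of Lemma~\ref{S41L1}—spiral, node, degenerate node, according to the trichotomy (\ref{S41L1E3+})—while verifying that all constants remain bounded uniformly in $\gamma$. Mere pointwise convergence plus ODE compactness does not by itself force $\tilde w$ to have the correct behavior at $-\infty$, so this quantitative decay estimate is essential to identify the limit with $\bar w$ and to push the convergence out to $\xi=-\infty$.
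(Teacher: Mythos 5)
Your proposal is correct, and its skeleton is the same as the paper's: the a priori bound (\ref{S5E3}), Arzel\`a--Ascoli on compact intervals, identification of the limit with $\bar w$ through the uniqueness of (\ref{S5L2E0}) (proved as in Lemma~\ref{S41L0}), and a splitting argument near $\xi=-\infty$ where both $w(\cdot,\gamma)$ and $\bar w$ are uniformly small. The genuine difference is at the identification step: the paper simply asserts that the compact-interval limit ``is a solution of (\ref{S5L2E0})'' and invokes uniqueness, leaving implicit the verification that the limit inherits the normalization $e^{-m\theta\xi}\tilde w(\xi)\to 1$, whereas you make this the centerpiece and supply it by a uniform-in-$\gamma$ quantitative estimate. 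That is a worthwhile strengthening, since without it a local limit could a priori be a different translate of the same connecting orbit (every positive solution of the limit equation tending to $0$ at $-\infty$ behaves like $Ce^{m\theta\xi}$ for some $C>0$, and one must rule out $C\neq 1$). Two remarks on what you call the main obstacle. First, the trichotomy (\ref{S41L1E3+}) is irrelevant here: it concerns the linearization at $w=1$ (spiral versus node), while the behavior as $\xi\to-\infty$ is governed by the linearization at $w=0$, whose characteristic equation $\Lambda^2+\alpha\Lambda-1=0$ always has one positive and one negative real root, so there is only one case. Second, the estimate you postulate follows far more cheaply than by a Green's-function analysis: since $e^{-m\theta\xi}w(\xi,\gamma)=u(s,\gamma)/\gamma$ with $s=e^{m(\xi-\tau(\gamma))}$, and $0<u\le\gamma$ by (\ref{S5E3}), integrating $\bigl(s^{N-1}(u/\gamma)'\bigr)'=s^{N-1}\bigl(u/\gamma-\gamma^{p-1}(u/\gamma)^p\bigr)$ twice from $s=0$ gives $|u(s,\gamma)/\gamma-1|\le\frac{1}{2N}(1+\gamma^{p-1})s^2\le C\bigl(e^{2m(\xi-\tau(\gamma))}+e^{2m\xi}\bigr)$ uniformly in $\gamma$ (using $\gamma^{p-1}s^{2}=A^{p-1}e^{2m\xi}$), which is exactly your claimed bound with $c=2m$ and passes to the limit to give $e^{-m\theta\xi}\tilde w(\xi)\to1$. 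With that step filled in, your argument closes along the same lines as the paper's.
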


\begin{proof}
Let $\e>0$ be small. Since $\bar{w}(\xi)$ is a solution of (\ref{S5L2E0}), 
\begin{equation}\label{S5L2E1}
\bar{w}(\xi)\rightarrow 0\quad (\xi\rightarrow -\infty).
\end{equation}
Because of (\ref{S5E3}) and (\ref{S5L2E1}), there is $\underline{\xi}(<\bar{\xi})$ such that
\[
|w(\xi,\gamma)|\le\frac{\e}{2}\quad\textrm{and}\quad |\bar{w}(\xi)|\le\frac{\e}{2}\quad (-\infty<\xi\le\underline{\xi}).
\]
Then,
\begin{align}
|w(\xi,\gamma)-\bar{w}(\xi)|&\le |w(\xi,\gamma)|+|\bar{w}(\xi)|\nonumber\\
&\le \frac{\e}{2}+\frac{\e}{2}\nonumber\\
&=\e\quad (-\infty<\xi<\underline{\xi}).\label{S5L2E2}
\end{align}
We show that
\begin{equation}\label{S5L2E3}
|w(\xi,\gamma)-\bar{w}(\xi)|<\e\quad\textrm{in}\quad [\underline{\xi},\bar{\xi}].
\end{equation}
Since $y'(t)=A^{-1}ms^{\theta}(\theta u(s)+su'(s))$, $y'(t,\gamma)\rightarrow 0$ $(t\rightarrow -\infty)$. Hence, $w'(\xi,\gamma)\rightarrow 0$ $(\xi\rightarrow -\infty)$.
Integrating the equation in (\ref{S5E1}) over $(-\infty,\xi]$, we have
\begin{equation}\label{S5L2E4}
w'(\xi)+\alpha w(\xi)=\int_{-\infty}^{\xi}\left( w-w^p+m^2e^{2m(\eta-\tau(\gamma))}w\right)d\eta,
\end{equation}
where we use $w'(-\infty)=0$ and $w(-\infty)=0$. There is $C_0>0$ independent of large $\gamma>0$ such that
\[
\int_{-\infty}^{\xi}\left| w-w^p+m^2e^{2m(\eta-\tau(\gamma))}w\right|d\eta<C_0\quad (\xi\le\bar{\xi}),
\]
because of (\ref{S5E3}). Then $|w'(\xi)|\le |\alpha w(\xi)|+C_0$, which indicates that $w$ is equicontinuous in $[\underline{\xi},\bar{\xi}]$. It follows from (\ref{S5E3}) that $w(\xi,\gamma)$ is uniformly bounded in $[\underline{\xi},\bar{\xi}]$. By the Arzel\`a-Ascoli theorem we see that $w(\xi,\gamma)$ converges to a certain function in $C^0[\underline{\xi},\bar{\xi}]$ which satisfies the equation in (\ref{S5L2E0}).
This function is a solution of (\ref{S5L2E0}). Using the same argument as in the proof of Lemma~\ref{S41L0}, we see that the solution of (\ref{S5L2E0}) is unique, hence the limit function is $\bar{w}(\xi)$. Therefore, (\ref{S5L2E3}) holds for large $\gamma>0$. The estimates (\ref{S5L2E2}) and (\ref{S5L2E3}) indicate that $w(\xi,\gamma)\rightarrow\bar{w}(\xi)$ in $C^0(-\infty,\bar{\xi}]$ as $\gamma\rightarrow\infty$. In particular, this convergence is uniform.

Next, we show that
\begin{equation}\label{S5L2E5}
w'(\xi,\gamma)\rightarrow\bar{w}'(\xi)\quad\textrm{in}\quad C^0(-\infty,\bar{\xi}]\quad\textrm{as}\quad\gamma\rightarrow\infty.
\end{equation}
Because of (\ref{S5E3}), we see by (\ref{S5L2E4}) that $|w'(\xi,\gamma)|$ is uniformly bounded in each bounded interval. Since $w''=-\alpha w'+w-w^p+m^2e^{2m(\xi-\tau(\gamma))}w$, $|w''(\xi,\gamma)|$ is also uniformly bounded in each bounded interval.
By the Arzel\`a-Ascoli theorem we see that $w'(\xi,\gamma)$ uniformly converges in a bounded interval as $\gamma\rightarrow\infty$.
Since the solution of (\ref{S5L2E0}) is unique, the limit function is $\bar{w}'(\xi)$. Hence
\begin{equation}\label{S5L2E6}
w'(\xi,\gamma)\rightarrow\bar{w}'(\xi)\quad\textrm{in}\quad C^0_{loc}(-\infty,\bar{\xi}].
\end{equation}
By the same argument as in the case $w'(-\infty)$ we can show that $\bar{w}'(-\infty)=0$. Therefore, there is $\xi_0$ such that
\begin{equation}\label{S5L2E7}
|w'(\xi,\gamma)-\bar{w}'(\xi)|<\e\quad (-\infty<\xi<\xi_0).
\end{equation}
Because of (\ref{S5L2E6}) and (\ref{S5L2E7}), (\ref{S5L2E5}) holds. The proof is complete.
\end{proof}

Since $(\bar{w},\bar{w}')\rightarrow (1,0)$ ($\xi\rightarrow\infty$), it follows from Lemma~\ref{S5L2} that $(y,z)$ approaches $(1,0)$ as $\gamma\rightarrow\infty$ along $t=\bar{\xi}-\tau(\gamma)$ provided that $\bar{\xi}$ is large enough.
In the next lemma we show that once near $(1,0)$, the orbit $(y,z)$ remains near $(1,0)$ until $t$ reaches some $T$, which is negatively large, but independent of $\gamma$.
We set
\[
\Gamma_{\rho}:=\{(y,z);\ E(y,z)<E(1,0)+\rho\},\ \ \rho>0,
\]
where $E$ is defined by (\ref{S2E22++}).
\begin{lemma}\label{S5L3}
For every $\e>0$, there exists $T_{\e}(<t^*)$ such that the following holds: If there is $t_0(<T_{\e})$ such that $(y(t_0),z(t_0))\in\Gamma_{\e}$, then
\[
(y(t),z(t))\in\Gamma_{2\e}\quad (t_0\le t\le T_{\e}).
\]
\end{lemma}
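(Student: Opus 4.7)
The strategy is to monitor how the Lyapunov-type energy $E$ evolves along trajectories of the nonautonomous system $y' = z$, $z' = -\alpha z + y - y^p + m^2 e^{2mt} y$ equivalent to (\ref{S4E1}), and to exploit that the perturbation $m^2 e^{2mt}$ is exponentially negligible as $t \to -\infty$. A direct computation gives
\[
\frac{d}{dt} E(y(t), z(t)) = -\alpha\, z(t)^2 + m^2 e^{2mt}\, y(t) z(t).
\]
The first term is nonpositive, so the only obstruction to $E$ being nonincreasing is the perturbation term, whose magnitude can be made uniformly $O(e^{2mt})$ as soon as we have an a priori bound on $|yz|$ along the orbit.

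Next, I would obtain such a bound from the hypothesis $(y,z) \in \Gamma_{2\e}$. It suffices to take $\e$ small enough that $E(1,0) + 2\e < 0$ (which is what is actually needed in the application to Theorem \ref{S5T1}). Since the one-variable ``potential'' $V(y) := -y^2/2 + y^{p+1}/(p+1)$ has a strict global minimum $V(1) = E(1,0) < 0$ on $(0,\infty)$, tends to $0$ as $y \downarrow 0$, and to $+\infty$ as $y \to \infty$, the level set $\{V < E(1,0) + 2\e\} \cap (0,\infty)$ is a bounded interval $[y_1,y_2]$ containing $1$. On $\Gamma_{2\e} \cap \{y>0\}$ the inequality $V(y) \le E(y,z) < E(1,0) + 2\e$ then forces $y \in [y_1,y_2]$, and $z^2 \le 2(E(y,z) - V(y)) \le 2(E(1,0)+2\e - E(1,0)) = 4\e$. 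Hence there is $M = M(\e)$ with $|yz| \le M$ throughout $\Gamma_{2\e} \cap \{y>0\}$.

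With this bound, as long as the orbit stays in $\Gamma_{2\e}$ on $[t_0,t]$, integrating the energy identity yields
\[
E(y(t),z(t)) \le E(y(t_0),z(t_0)) + m^2 M \int_{t_0}^{t} e^{2ms}\,ds \le E(1,0) + \e + \tfrac{mM}{2} e^{2mt}.
\]
Now choose $T_\e < t_n^*$ so negative that $\tfrac{mM}{2}\,e^{2mT_\e} \le \e/2$. Any orbit that has remained in $\Gamma_{2\e}$ up to time $t \in [t_0,T_\e]$ then in fact satisfies $E(y(t),z(t)) \le E(1,0) + 3\e/2$, i.e.\ it is strictly inside $\Gamma_{2\e}$.

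The argument is closed by a standard continuation. Let $\tau_* := \sup\{\,t \in [t_0,T_\e] : (y(s),z(s)) \in \Gamma_{2\e}\text{ for all }s \in [t_0,t]\,\}$, which is well defined and $> t_0$ since $(y(t_0),z(t_0)) \in \Gamma_\e \subset \Gamma_{2\e}$ and the orbit is continuous. The estimate above gives $E(y(\tau_*),z(\tau_*)) \le E(1,0) + 3\e/2 < E(1,0) + 2\e$, so $(y(\tau_*),z(\tau_*))$ lies in the open set $\Gamma_{2\e}$; continuity then forces $\tau_* = T_\e$, which is the desired conclusion. The main technical point is the a priori boundedness step: converting the energy inequality into a pointwise bound on $(y,z)$ is what makes the $|yz|$ estimate self-consistent, and without it the integration step cannot be closed.
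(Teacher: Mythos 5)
Your proposal is correct and shares the paper's skeleton: compute $E'=-\alpha z^2+m^2e^{2mt}yz$, show the perturbation contributes at most $O(e^{2mT_\e})$ up to time $T_\e$, choose $T_\e$ (depending only on $\e$) to make that contribution $<\e$, and close with a continuation/bootstrap argument showing $E$ cannot reach $E(1,0)+2\e$ before $T_\e$. The one substantive difference is how the cross term is controlled. The paper keeps the $-\alpha z^2$ term and absorbs the cross term by Young's inequality, $E'\le\frac{m^4}{4\alpha}e^{4mt}y^2$, then uses the elementary lower bound $E(y,z)\ge -K+y^2$ to turn this into an integral inequality involving $E$ alone; this works for every $\e>0$ and never needs boundedness of $\Gamma_{2\e}$ or positivity of $y$. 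You instead drop $-\alpha z^2$ and bound $|yz|\le M(\e)$ pointwise on $\Gamma_{2\e}\cap\{y>0\}$ using the shape of the potential, which forces you to assume $E(1,0)+2\e<0$, i.e.\ you prove the lemma only for small $\e$, and you tacitly use that $y>0$ along the orbit while it stays in $\Gamma_{2\e}$. Neither restriction is harmful: in the proof of Theorem~\ref{S5T1} the constant $\delta$ in the contradiction hypothesis can be taken arbitrarily small (since $\Gamma_{\delta'}\subset\Gamma_{\delta}$ for $\delta'<\delta$), and once $E(1,0)+2\e<0$ the region $\Gamma_{2\e}$ is disjoint from $\{y=0\}$, so positivity of $y$ propagates from $t_0$ by continuity — a point worth stating explicitly, since your continuation set $\tau_*$ is defined via $\Gamma_{2\e}$ alone while your a priori bound is only established on $\Gamma_{2\e}\cap\{y>0\}$. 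So the argument is sound as an alternative, but note that, as written, it establishes a slightly weaker statement than the lemma's ``for every $\e>0$''; the paper's Young-plus-$E\ge -K+y^2$ device is what buys the unrestricted range of $\e$ (and independence from any positivity or compactness considerations).
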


\begin{proof}
Let $E(t):=E(y(t),z(t))$. We have
\[
E'(t)=-\alpha z^2+m^2e^{2mt}yz\le\frac{m^4}{4\alpha}e^{4mt}y^2.
\]
Hence, for $t>t_0$,
\[
E(t)\le E(t_0)+\frac{m^4}{4\alpha}\int_{t_0}^te^{4m\zeta}y(\zeta)^2d\zeta.
\]
However, there is $K>0$ such that
\[
E(y,z)\ge -K+y^2.
\]
Therefore,
\begin{equation}\label{S5L3E4}
E(t)\le E(t_0)+\frac{m^4}{4\alpha}\left(\frac{K}{4m}e^{4mt}+\int_{t_0}^te^{4m\zeta}E(\zeta)d\zeta\right).
\end{equation}
We choose $T_{\e}$ so that
\[
\frac{m^4}{4\alpha}\left\{\frac{K}{4m}+\frac{1}{4m}\left( E(1,0)+2\e\right)\right\} e^{4mT_{\e}}<\e.
\]
Let $t_0(<T_{\e})$ and define
\[
t_1:=\sup\{ t>t_0;\ E(t)<E(1,0)+2\e\}.
\]
We show by contradiction that $t_1>T_{\e}$. Suppose the contrary, i.e., $t_1\le T_{\e}$. Then by (\ref{S5L3E4}) and the definition of $t_1$ we have
\begin{align*}
E(t_1) &\le E(t_0)+\frac{m^4}{4\alpha}\left(\frac{K}{4m}e^{4mt}+\int_{t_0}^te^{4m\zeta}E(\zeta)d\zeta\right)\\
&<E(t_0)+\e\\
&\le E(1,0)+\e+\e\\
&=E(1,0)+2\e,
\end{align*}
which contradicts the definition of $t_1$. Hence, $t_1>T_{\e}$, which indicates that $(y,z)\in\Gamma_{2\e}$ ($t_0\le t\le T_{\e}$).
\end{proof}

\begin{lemma}\label{S5L4}
Suppose that $\gamma>0$ is large. There exists a constant $M>0$ independent of $\gamma>0$ such that
\[
|y(t)|+|z(t)|\le M\quad (t\le t_n^*).
\]
\end{lemma}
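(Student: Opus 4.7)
My strategy is to split the half-line $(-\infty, t_n^*]$ into three subintervals $(-\infty,t_0]$, $[t_0,T_\e]$, $[T_\e,t_n^*]$ and bound $(y(t),z(t))$ uniformly in $\gamma$ on each piece. Fix a small $\e>0$ and let $T_\e<t_n^*$ be the constant from Lemma~\ref{S5L3} (after making it smaller if necessary). In Proposition~\ref{S2P1} the phase plane analysis of $w'=z$, $z'=-\alpha z+w-w^p$ shows that, since $E$ is a strict Lyapunov function on $\{y>0\}$ and $(1,0)$ is the only attractor, the heteroclinic $\bar{w}(\xi)$ of (\ref{S5L2E0}) satisfies $(\bar{w}(\xi),\bar{w}'(\xi))\to(1,0)$ as $\xi\to+\infty$. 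Choose $\bar{\xi}\in\R$ so that $(\bar{w}(\bar{\xi}),\bar{w}'(\bar{\xi}))\in\Gamma_{\e/2}$. By Lemma~\ref{S5L2}, for all sufficiently large $\gamma$ the point $(w(\bar{\xi},\gamma),w'(\bar{\xi},\gamma))$ lies in $\Gamma_{\e}$; in the $(y,z)$ picture this says $(y(t_0),z(t_0))\in\Gamma_\e$ at $t_0:=\bar{\xi}-\tau(\gamma)$. Since $\tau(\gamma)\to\infty$, also $t_0<T_\e$ for large $\gamma$.

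\textbf{Bounds on the first two intervals.} On $(-\infty,t_0]$ I use the uniform convergence $w(\cdot,\gamma)\to\bar{w}$ and $w'(\cdot,\gamma)\to\bar{w}'$ in $C^0(-\infty,\bar{\xi}]$ supplied by Lemma~\ref{S5L2}. For large $\gamma$ this gives $|y(t)|+|z(t)|=|w(\xi,\gamma)|+|w'(\xi,\gamma)|\le \sup_{\xi\le\bar{\xi}}(|\bar{w}(\xi)|+|\bar{w}'(\xi)|)+1$, which is finite because $\bar{w},\bar{w}'\to 0$ as $\xi\to-\infty$ and both are continuous on $(-\infty,\bar{\xi}]$. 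On $[t_0,T_\e]$ the hypotheses of Lemma~\ref{S5L3} apply and yield $(y(t),z(t))\in\Gamma_{2\e}$. Since $p>1$, the function $E(y,z)=\frac{z^2}{2}-\frac{y^2}{2}+\frac{y^{p+1}}{p+1}$ in (\ref{S2E22++}) is coercive (the $y^{p+1}/(p+1)$ term dominates $y^2/2$ for large $|y|$), so $\Gamma_{2\e}$ is a bounded subset of $\R^2$ and $|y|+|z|$ is bounded on this interval by a constant depending only on $\e$.

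\textbf{Bound on the last interval.} For $T_\e\le t\le t_n^*$ I use the Lyapunov function from the proof of Lemma~\ref{S42L1}, namely $\tilde{E}(y,z;t):=\frac{z^2}{2}-\frac{\beta(t)}{2}y^2+\frac{y^{p+1}}{p+1}$ with $\beta(t):=1+m^2e^{2mt}$. The identity $\frac{d}{dt}\tilde{E}(y(t),z(t);t)=-\alpha z^2-m^3e^{2mt}y^2\le 0$, already recorded in (\ref{S42E3}), shows $\tilde{E}$ is nonincreasing along orbits of (\ref{S4E1}). Hence $\tilde{E}(y(t),z(t);t)\le \tilde{E}(y(T_\e),z(T_\e);T_\e)\le E(y(T_\e),z(T_\e))\le E(1,0)+2\e=:M_0$, a bound independent of $\gamma$ by the previous paragraph. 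Writing $\beta^*:=\beta(t_n^*)=1+m^2e^{2mt_n^*}$, on $[T_\e,t_n^*]$ we have $\beta(t)\le\beta^*$ and therefore $\tilde{E}(y,z;t)\ge\frac{z^2}{2}-\frac{\beta^*}{2}y^2+\frac{y^{p+1}}{p+1}$. Since $p+1>2$, the right-hand side is coercive in $(y,z)$, so the sublevel set $\{(y,z):\frac{z^2}{2}-\frac{\beta^*}{2}y^2+\frac{y^{p+1}}{p+1}\le M_0\}$ is bounded, yielding a bound $|y(t)|+|z(t)|\le M_2$ depending only on $M_0$ and $\beta^*$.

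\textbf{Main obstacle.} Combining the three bounds gives a uniform $M>0$ as claimed. The one delicate point is that the autonomous energy $E$ is not a Lyapunov function for (\ref{S4E1}) because of the nonautonomous term $-m^2e^{2mt}y$; the modification $\tilde{E}$ absorbs this term at the cost of the time-dependent coefficient $\beta(t)$, which is harmless precisely because $t$ is confined to $(-\infty,t_n^*]$ so that $\beta(t)$ remains bounded by $\beta^*$. The anchor $\bar{\xi}$ must be chosen once and then held fixed, so that $\bar{\xi}-\tau(\gamma)$ drifts to $-\infty$ with $\gamma$ and eventually falls below $T_\e$, allowing Lemma~\ref{S5L3} to bridge the autonomous $\bar{w}$-regime with the $\tilde{E}$-regime on $[T_\e,t_n^*]$.
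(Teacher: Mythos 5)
Your proposal is correct, but it takes a noticeably heavier route than the paper. You anchor the nonautonomous energy $\tilde{E}$ at the time $T_{\e}$, reaching that anchor through a three-interval decomposition: uniform convergence to the heteroclinic $\bar{w}$ from Lemma~\ref{S5L2} on $(-\infty,\bar{\xi}]$, the trapping region $\Gamma_{2\e}$ from Lemma~\ref{S5L3} on $[\bar{\xi}-\tau(\gamma),T_{\e}]$, and then monotonicity of $\tilde{E}$ together with boundedness of $\beta(t)$ on $[T_{\e},t_n^*]$. This is logically sound (Lemmas~\ref{S5L2} and \ref{S5L3} are proved independently of the present lemma, so there is no circularity), and each step checks out, including the inequalities $\tilde{E}\le E$ (since $\beta\ge 1$) and the coercivity of the sublevel sets. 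The paper's proof, however, dispenses with Lemmas~\ref{S5L2} and \ref{S5L3} entirely: since the regular solution satisfies $y(t)\sim A^{-1}\gamma e^{m\theta t}\rightarrow 0$ and $z(t)\rightarrow 0$ as $t\rightarrow-\infty$, one has $\tilde{E}(-\infty)=0$ for every $\gamma$, so the monotonicity (\ref{S42E3}) gives $\tilde{E}(y(t),z(t))\le 0$ on all of $(-\infty,t_n^*]$ at once, and the boundedness of $\beta(t)$ there yields $y^2+z^2\le K$ with $K$ independent of $\gamma$. In other words, the uniform anchor you manufacture at $T_{\e}$ via the limiting heteroclinic is already available for free at $t=-\infty$; what your argument buys in exchange is only that it would still work if the behavior of the orbit at $t=-\infty$ were less explicit, at the cost of fixing $\e$, $\bar{\xi}$, $T_{\e}$ and invoking the fact that $(\bar{w},\bar{w}')\rightarrow(1,0)$, which the paper only needs later in the proof of Theorem~\ref{S5T1}.
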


\begin{proof}
By the same argument as in the proof of Lemma~\ref{S5L2} we show that $y'(-\infty)=0$. Let $\tilde{E}:=\tilde{E}(y(t),z(t))$. Since $\tilde{E}(t)\le\tilde{E}(-\infty)=0$, hence
\begin{equation}\label{S5L4E1}
\frac{z(t)^2}{2}-\frac{\beta(t)}{2}y(t)^2+\frac{y(t)^{p+1}}{p+1}\le 0\quad\textrm{for}\quad t\in\R.
\end{equation}
When $t\in (-\infty,t_n^*]$, $\beta(t)$ is bounded, hence there is $K>0$ such that
\begin{equation}\label{S5L4E2}
-\frac{\beta(t)}{2}y^2+\frac{y^{p+1}}{p+1}\ge -\frac{K}{2}+\frac{y^2}{2}\quad (-\infty<t\le t_n^*).
\end{equation}
We see by (\ref{S5L4E1}) and (\ref{S5L4E2}) that for $t\in (-\infty,t_n^*]$,
\[
\frac{z(t)^2}{2}+\frac{y(t)^2}{2}-\frac{K}{2}\le\frac{z(t)^2}{2}-\frac{\beta(t)}{2}y(t)^2+\frac{y(t)^{p+1}}{p+1}\le 0,
\]
hence $y(t)^2+z(t)^2\le K$. The conclusion holds.
\end{proof}

\begin{proof}[Proof of Theorem~\ref{S5T1}]
Let $(y_j,z_j)$ be a solution of (\ref{S42E1}) such that $e^{-m\theta t}y(t)\rightarrow\frac{\gamma_j}{A}$ ($t\rightarrow -\infty$).
By Lemma~\ref{S5L4}, the sequence $\{(y_j,z_j)\}$ is uniformly bounded in $(-\infty,t_n^*]$. Since $(y_j,z_j)$ is a solution of (\ref{S42E1}), the sequence $\{(y_j,z_j)\}$ is uniformly bounded in $(C^2(-\infty,t_n^*])^2$.
By the Arzel\`a-Ascoli theorem we can extract a subsequence, which is still denoted by $\{(y_j,z_j)\}$, converges in $(C^1(I))^2$ to some function $(\bar{y},\bar{z})$, where $I$ is an arbitrary compact subset of $(-\infty,t_n^*]$.
By taking the limit, we find that $(\bar{y},\bar{z})$ satisfies (\ref{S42E1}).

It remains to show that
\begin{equation}\label{S5T1E1}
\bar{y}(t)\rightarrow 1\quad\textrm{as}\quad t\rightarrow -\infty,
\end{equation}
for then $\bar{y}$ is the solution of the problem
\[
\begin{cases}
y''+\alpha y'-y+y^p-m^2e^{2mt}y=0, & -\infty<t<t_n^*,\\
y(-\infty)=1.
\end{cases}
\]
We have shown in Lemma~\ref{S41L0} that this problem has a unique solution $y^*(t)$. Thus, $\bar{y}(t)\equiv y^*(t)$ ($-\infty<t\le t_n^*$). Because of this uniqueness, the entire sequence $\{y_j\}$ converges to $y^*$. Thus, as $j\rightarrow\infty$,
\[
y_j\rightarrow y^*\quad\textrm{and}\quad
y'_j\rightarrow (y^*)'
\quad\textrm{in}\quad C^0_{loc}(-\infty,t_n^*].
\]
Hence, the first two convergences of the statement of the theorem are obtained.
The third convergence holds, since $u_j$ satisfies (\ref{S3E2}).

We show (\ref{S5T1E1}) by contradiction argument. Suppose that (\ref{S5T1E1}) does not hold. Then there exists a sequence $\{t_k\}$ such that $t_k\rightarrow -\infty$ as $k\rightarrow\infty$, and a constant $\delta>0$ so that
\begin{equation}\label{S5T1E2}
(\bar{y}(t_k),\bar{z}(t_k))\not\in\Gamma_{\delta}\quad\textrm{for all}\quad k\ge 1.
\end{equation}
Let $\e:=\frac{\delta}{4}$. Then by Lemma~\ref{S5L2}, there exist $\bar{\xi}\in\R$ and $j_0\in\N$ such that
\[
(y_j(\bar{\xi}-\tau(\gamma_j)),z_j(\bar{\xi}-\tau(\gamma_j)))\in\Gamma_{\e}\quad\textrm{if}\quad j>j_0.
\]
By Lemma~\ref{S5L3}, this implies that if $j>j_0$, then
\[
(y_j(t),z_j(t))\in\Gamma_{2\e}\subset\Gamma_{\delta}\quad\textrm{in}\quad (\bar{\xi}-\tau(\gamma_j),T_{\e}),
\]
where $T_{\e}$ depends only on $\e$. By choosing $j$ large enough, the interval $(\bar{\xi}-\tau(\gamma_j),T_{\e})$ can be made to include an element of the sequence $\{t_k\}$.
We obtain a contradiction, because of (\ref{S5T1E2}).
\end{proof}
We are in a position to prove Theorem~\ref{Th0}.
\begin{proof}[Proof of Theorem~\ref{Th0}]
The existence of $u^*(s)$ near $s=0$ follows from Lemma~\ref{S41L0}.
Lemma~\ref{S42L1} says that $u^*$ is positive and it is defined in $s\in (0,\infty)$.
It follows from Lemma~\ref{S43L1} that $u^*$ oscillates around $1$.
The convergence property follows from Theorem~\ref{S5T1}.
\end{proof}

\subsection{Convergence to $\lambda_n^*$}
Using Theorem~\ref{S5T1}, we  prove the following:
\begin{theorem}\label{S5T2}
Let $\{\lambda_n(\gamma)\}_{\gamma>1}$ be as in Subsection~3.3, and let $\lambda_n^*$ be as in Theorem~\ref{Th1}. Then
\[
\lambda_n(\gamma)\rightarrow\lambda_n^*\quad\textrm{as}\quad\gamma\rightarrow\infty.
\]
\end{theorem}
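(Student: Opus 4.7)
The plan is to show that the $n$-th positive critical point of $u(\,\cdot\,,\gamma_j)$, which equals $\sqrt{\lambda_n(\gamma_j)}$ by the analysis in Section~3, converges to $s_n^*$ along any sequence $\gamma_j\to\infty$. Write $u_j:=u(\,\cdot\,,\gamma_j)$. Theorem~\ref{S5T1} gives $u_j\to u^*$, $u_j'\to(u^*)'$ and $u_j''\to(u^*)''$ in $C^0_{loc}(0,r_n^*]$, with $r_n^*=s_{n+1}^*$. I would split the argument into counting critical points of $u_j$ on a bulk interval $[s_0,s_{n+1}^*]$ via Theorem~\ref{S5T1}, and separately ruling out spurious critical points in the outer piece $(0,s_0)$, where that convergence degenerates.

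For the bulk, each $s_k^*$ is a nondegenerate critical point of $u^*$: the ODE yields $(u^*)''(s_k^*)=-f(u^*(s_k^*))\neq 0$ since $u^*(s_k^*)\notin\{0,1\}$ by Lemma~\ref{S43L1}. Fix $s_0\in(0,s_1^*)$; on the compact interval $[s_0,s_{n+1}^*]$ the convergences $u_j'\to(u^*)'$ and $u_j''\to(u^*)''$ are uniform. A standard continuity argument then gives, for large $j$, exactly $n+1$ zeros $\sigma_1^j<\cdots<\sigma_{n+1}^j$ of $u_j'$ in $[s_0,s_{n+1}^*]$, with $\sigma_k^j\to s_k^*$: on each small neighborhood of $s_k^*$ the limit $(u^*)''$ keeps a constant nonzero sign, so for large $j$ the function $u_j'$ is strictly monotone there and has exactly one zero, while on any closed subset of $[s_0,s_{n+1}^*]$ disjoint from $\{s_1^*,\dots,s_{n+1}^*\}$ the limit $(u^*)'$ is bounded away from zero and hence so is $u_j'$.

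The delicate part is to show that $u_j$ has no critical point in $(0,s_0)$ once $s_0$ is small and $j$ is large. For this I would employ the Lyapunov function
\[
E(s):=\tfrac12(u_j'(s))^2+F(u_j(s)),\qquad F(u):=-\tfrac{u^2}{2}+\tfrac{u^{p+1}}{p+1},
\]
which satisfies $E'(s)=-\tfrac{N-1}{s}(u_j'(s))^2\leq 0$ along (\ref{S3E2}). Since $u_j''(0)=-f(\gamma_j)/N<0$ for $\gamma_j>1$, $u_j$ is strictly decreasing on a right neighborhood of $0$. Suppose, toward a contradiction, that $u_j$ has a critical point in $(0,s_0)$ and let $\sigma_j$ denote the smallest such point. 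Then $u_j'<0$ on $(0,\sigma_j)$, so $\sigma_j$ is a local minimum; the ODE at $\sigma_j$ gives $u_j''(\sigma_j)=-f(u_j(\sigma_j))\geq 0$, forcing $u_j(\sigma_j)\leq 1$ and hence $E(\sigma_j)=F(u_j(\sigma_j))\leq 0$. By monotonicity, $E(s_0)\leq 0$. On the other hand, Theorem~\ref{S5T1} gives $E(s_0)\to E^*(s_0):=\tfrac12((u^*)'(s_0))^2+F(u^*(s_0))$, and the asymptotics of Corollaries~\ref{S41C1} and~\ref{S42+C1} yield
\[
E^*(s)\sim\Bigl(\tfrac12\theta^2A^2+\tfrac{A^{p+1}}{p+1}\Bigr)s^{-2(p+1)/(p-1)}\longrightarrow+\infty\quad(s\downarrow 0),
\]
the two powers matching because $2\theta+2=\theta(p+1)$. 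Choosing $s_0$ small enough that $E^*(s_0)>0$ yields $E(s_0)>0$ for large $j$, contradicting $E(s_0)\leq 0$.

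Combining the two parts, for large $j$ the function $u_j$ has exactly $n+1$ critical points on $(0,s_{n+1}^*]$, so the $n$-th positive critical point $\sqrt{\lambda_n(\gamma_j)}=\sigma_n^j$ converges to $s_n^*$ and hence $\lambda_n(\gamma_j)\to\lambda_n^*$. Since $\{\gamma_j\}$ was arbitrary, $\lambda_n(\gamma)\to\lambda_n^*$ as $\gamma\to\infty$. The genuine obstacle is the third step: Theorem~\ref{S5T1} gives no control of $u_j$ on a fixed right neighborhood of $s=0$, where $u^*$ blows up and $u_j(0)=\gamma_j\to\infty$, so a separate mechanism is needed to preclude a small-scale oscillation produced by the large initial value. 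The Lyapunov argument resolves this cleanly by pitting the necessarily nonpositive energy at a hypothetical first critical point against the divergent energy carried by the singular solution near $s=0$.
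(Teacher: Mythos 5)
Your proposal is correct and follows essentially the same route as the paper: rule out critical points of $u(\,\cdot\,,\gamma)$ on a fixed interval adjacent to $s=0$ by noting that a first critical point there would be a local minimum with value below $1$, hence energy $E\le 0$, which the Lyapunov monotonicity propagates and Theorem~\ref{S5T1} then contradicts, and afterwards obtain $\sqrt{\lambda_n(\gamma)}\to s_n^*$ from the locally uniform convergence together with the simplicity of the zeros of $(u^*)'$. The only difference is in how the inner contradiction is closed: the paper uses the amplitude bound $u\le\left(\frac{p+1}{2}\right)^{\frac{1}{p-1}}$ forced by invariance of $\{E\le 0\}$ against $C^0$-convergence to $u^*$ on $[\tilde{s}_1^*/2,\tilde{s}_1^*]$, whereas you compare the propagated energy at a fixed small $s_0$ with the divergent limiting energy of $u^*$ via the asymptotics of Corollaries~\ref{S41C1} and~\ref{S42+C1}; both variants are valid.
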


\begin{proof}
Let $\tilde{s}_1^*$ be the first positive zero of $u^*(s)-\left(\frac{p+1}{2}\right)^{\frac{1}{p-1}}$, and let $I_1:=(0,\frac{\tilde{s}_1^*}{2})$.
Let $s_1$ be the first positive critical point of $u(s,\gamma)$. We show by contradiction that if $\gamma>0$ is large,then $u(s,\gamma)$ does not have a critical point in $I_1$. Suppose that $s_1\in I_1$. Then, $s_1$ should be a local minimum point of $u(s,\gamma)$, hence by (\ref{S3E2}), $u(s_1,\gamma)<1$. Then the corresponding orbit in the $(u,u')$-plane is in $\{E\le 0\}$ at $s=s_1$, and it cannot go out when $s>s_1$. This contradicts Theorem~\ref{S5T1}, because $u(s,\gamma)\rightarrow u^*(s)$ in $C^0[\frac{\tilde{s}_1^*}{2},\tilde{s}_1^*]$. We have shown that $u$ does not have a critical point in $I_1$.

Let $s_n^*$ be the $n$-th positive critical point of $u^*(s)$.
Let $\delta>0$ be small.
Because of Theorem~\ref{S5T1}, $u(s,\gamma)\rightarrow u^*(s)$ in $C^2[\frac{\tilde{s}_1^*}{2},s_n^*+\delta]$ as $\gamma\rightarrow\infty$.
Since each zero of $(u^*)'$ is simple, $s_n\rightarrow s_n^*$ ($\gamma\rightarrow\infty$), where $s_n$ is the $n$-th positive critical point of $u(s,\gamma)$.
Since $\sqrt{\lambda_n(\gamma)}=s_n$ and $\sqrt{\lambda_n^*}=s_n^*$, we obtain the conclusion.
\end{proof}

% ****************************************************************************
% ****************************************************************************
% ****************************************************************************
% Section 6
% ****************************************************************************
% ****************************************************************************
% ****************************************************************************
\section{Oscillation of $\lambda_n(\gamma)$}
Let $u(s,\gamma)$ be the solution of (\ref{S3E2}), and let $(\lambda_n^*,u^*)$ be a singular solution of (\ref{S1E7}). Let $\{\lambda_n(\gamma)\}_{\gamma>1}$ be as given in Subsection~3.3.
The goal of this section is to prove the following:
\begin{theorem}\label{S6T1}
Suppose that $p_S<p<p_{JL}$.
$\lambda_n(\gamma)$ oscillates around $\lambda_n^*$ infinitely many times as $\gamma\rightarrow\infty$.
\end{theorem}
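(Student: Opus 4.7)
The plan is to execute the intersection-number program sketched in Section~1. With the blow-up rescaling $\rho:=\gamma^{(p-1)/2}s$, set $\tilde u(\rho,\gamma):=u(s,\gamma)/\gamma$ and $\tilde u^*(\rho):=u^*(s)/\gamma$; the function $\tilde u$ satisfies (\ref{S43L1E9}), so the lower-order term $\gamma^{-(p-1)}\tilde u$ disappears as $\gamma\to\infty$, and standard continuous dependence of ODE solutions on parameters yields $\tilde u(\rho,\gamma)\to\bar u(\rho,1)$ in $C^2_{loc}(0,\infty)\cap C^0_{loc}[0,\infty)$, where $\bar u(\rho,1)$ is the regular solution of (\ref{S1E7+}) with $\bar u(0,1)=1$. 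Meanwhile, the asymptotic $u^*(s)=As^{-\theta}(1+o(1))$ of Corollary~\ref{S41C1} together with $\bar u^*(\rho)=A\rho^{-\theta}$ from (\ref{singent}) give $\tilde u^*(\rho)\to\bar u^*(\rho)$ in $C^0_{loc}(0,\infty)$, because the rescaling sends any compact $\rho$-interval into a shrinking neighborhood of $s=0$ on which the asymptotic of $u^*$ is accurate.

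\textbf{Extracting a sequence of zero-crossings at $s=\sqrt{\lambda_n^*}$.} By Proposition~\ref{S2P1}, $\calZ_{(0,\infty)}[\bar u^*(\cdot)-\bar u(\cdot,1)]=\infty$ in the range $p_S<p<p_{JL}$, and these zeros are simple since both functions solve the same second-order ODE. Fixing any $N$ of them inside a compact subinterval of $(0,\infty)$ and invoking the two convergences above, for every sufficiently large $\gamma$ there are at least $N$ simple zeros of $\tilde u^*-\tilde u(\cdot,\gamma)$ in that subinterval. Transferring back to $s$ packs them into $(0,\delta)$ for any prescribed $\delta>0$, so $\calZ_{(0,\delta)}[u^*-u(\cdot,\gamma)]\to\infty$ as $\gamma\to\infty$. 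On the other hand, Theorem~\ref{S5T1} gives $u(\cdot,\gamma)\to u^*(\cdot)$ in $C^2$ on any compact subinterval of $(0,\sqrt{\lambda_n^*}]$, keeping the number of zeros on such a set bounded. Because every zero of $u^*-u(\cdot,\gamma)$ is simple, it depends continuously on $\gamma$; because $u^*(0)=\infty>u(0,\gamma)=\gamma$, no zero can enter through $s=0$; hence infinitely many zeros must sweep into $(0,\sqrt{\lambda_n^*}]$ through its right endpoint. This produces $\gamma_1<\gamma_2<\cdots\to\infty$ with $u(\sqrt{\lambda_n^*},\gamma_j)=u^*(\sqrt{\lambda_n^*})$ and, since successive crossings reverse the sign of $\partial_s(u^*-u)$ at $\sqrt{\lambda_n^*}$ while $u^*_s(\sqrt{\lambda_n^*})=0$, alternating signs of $u_s(\sqrt{\lambda_n^*},\gamma_j)$.

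\textbf{From the sign alternation to the oscillation of $\lambda_n(\gamma)$.} By Lemma~\ref{S43L1}, $u^*(\sqrt{\lambda_n^*})\neq 1$, so $u^*_{ss}(\sqrt{\lambda_n^*})=-f(u^*(\sqrt{\lambda_n^*}))\neq 0$ and $\sqrt{\lambda_n^*}$ is a nondegenerate critical point of $u^*$. Combined with the $C^2$-convergence of $u(\cdot,\gamma_j)$ to $u^*$ on a neighborhood of $\sqrt{\lambda_n^*}$, the implicit function theorem applied to $u_s(s,\gamma)=0$ produces, for large $j$, a unique critical point $\sqrt{\lambda_n(\gamma_j)}$ of $u(\cdot,\gamma_j)$ close to $\sqrt{\lambda_n^*}$, lying to the left or right of $\sqrt{\lambda_n^*}$ according to the sign of $u_s(\sqrt{\lambda_n^*},\gamma_j)$ and the sign of $u^*_{ss}(\sqrt{\lambda_n^*})$. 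The alternation of $u_s(\sqrt{\lambda_n^*},\gamma_j)$ therefore yields (\ref{S1E8}) and, in particular, forces $\lambda_n(\gamma_j)-\lambda_n^*$ to alternate in sign, proving the oscillation. The main obstacle in carrying out this plan is the first step: securing the two convergences with enough uniformity to transfer an unbounded intersection set through the limit. For $\tilde u^*$ this is especially delicate because $\tilde u^*(\rho)=u^*(s)/\gamma\to 0$ pointwise at any fixed $s>0$, so the nontrivial limit $\bar u^*(\rho)$ emerges only after the $\rho$-rescaling, and locally uniform control on compact subsets of $(0,\infty)$ rests on the precise singularity asymptotic of $u^*$ developed in Section~4.
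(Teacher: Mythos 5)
Your proposal is correct and follows essentially the same route as the paper: the blow-up rescaling $\rho=\gamma^{(p-1)/2}s$, the convergences to $\bar u(\cdot,1)$ and $\bar u^*$, the infinite intersection number from Proposition~\ref{S2P1}, and the continuity/simplicity argument forcing zeros to enter $(0,\sqrt{\lambda_n^*}]$ through the right endpoint with alternating signs of $u_s(\sqrt{\lambda_n^*},\gamma_j)$. The only (harmless) variation is the endgame: you deduce the sign of $\lambda_n(\gamma_j)-\lambda_n^*$ from the nondegeneracy of the critical point of $u^*$ and the implicit function theorem, whereas the paper argues by contradiction that otherwise a local maximum with $u>1$ would appear between $\sqrt{\lambda_n(\gamma_j)}$ and $\sqrt{\lambda_n^*}$, contradicting Theorem~\ref{S5T1}; both rest on the same $C^2$-closeness near $\sqrt{\lambda_n^*}$ (together with Theorem~\ref{S5T2} to identify the nearby critical point as the $n$-th one).
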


\begin{proof}
Let $I:=(0,\sqrt{\lambda_n^*}]$.
We define $\tilde{u}(\rho,\gamma):=u(s,\gamma)/\gamma$, $\tilde{u}^*(\rho,\gamma):=u^*(s)/\gamma$, and $\rho:=\gamma^{\frac{p-1}{2}}s$.
We show that
\begin{multline}\label{S6T1E1}
\textrm{for each $M\ge 1$, there are $\rho_0\in [0,1]$ and $\gamma_0>0$ such that
}\\
\calZ_{(0,\rho_0)}[\tilde{u}^*(\,\cdot\,,\gamma)-\tilde{u}(\,\cdot\,,\gamma)]\ge M\quad\textrm{for}\quad\gamma\ge\gamma_0.
\end{multline}
The function $\tilde{u}(\rho)$ satisfies (\ref{S43L1E9}).
Since the energy
\[
\tilde{G}(\tilde{u},\tilde{u}')=\frac{(\tilde{u}')^2}{2}-\frac{\tilde{u}^2}{2\gamma^{p-1}}+\frac{\tilde{u}^{p+1}}{p+1}
\]
decreases, $\tilde{G}(\tilde{u},\tilde{u}')\le \tilde{G}(1,0)<0$, hence $\tilde{u}\le \left(\frac{p+1}{2\gamma^{p-1}}\right)^{\frac{1}{p-1}}$.
Since $\tilde{u}/\gamma^{p-1}\rightarrow 0$ as $\gamma\rightarrow\infty$, 
\begin{equation}\label{S43L1E9_2}
\tilde{u}(\rho,\gamma)\rightarrow \bar{u}(\rho,1)\ \ \textrm{in}\ \ C^2_{loc}(0,\infty)\cap C^0_{loc}[0,\infty)\ \ (\gamma\rightarrow\infty),
\end{equation}
where $\bar{u}(\,\cdot\,,1)$ is the solution of (\ref{S1E7+}) with $\gamma=1$.
We apply the same change of variables to the singular solution $u^*(s)$.
We have
\begin{align}
\tilde{u}^*(\rho,\gamma)&:=\frac{1}{\gamma}A\left(\frac{\rho}{\gamma^{\frac{p-1}{2}}}\right)^{-\theta}(1+o(1))\quad (s\downarrow 0)\nonumber\\
&=A\rho^{-\theta}(1+o(1))\quad ({\rho}{\gamma^{-\frac{p-1}{2}}}\downarrow 0).\label{S43L1E10}
\end{align}
If $\rho$ is in a compact interval in $\R_+$, then $\rho\gamma^{-\frac{p-1}{2}}$ uniformly converges to $0$ on the interval as $\gamma\rightarrow\infty$.
Therefore, (\ref{S43L1E10}) indicates that 
\begin{equation}\label{S43L1E11}
\tilde{u}^*(\rho,\gamma)\rightarrow\bar{u}^*(\rho)\ \ \textrm{in}\ \ C^2_{loc}(0,\infty)\ \ (\gamma\rightarrow\infty),
\end{equation}
where $\bar{u}^*$ is given by (\ref{singent}) which is a singular solution of the equation in (\ref{S1E7+}).
Because of Proposition~\ref{S2P1}, $\calZ_{[0,\infty)}[\bar{u}^*(\,\cdot\,)-\bar{u}(\,\cdot\,,1)]=\infty$. Therefore, (\ref{S43L1E9_2}) and (\ref{S43L1E11}) indicate that (\ref{S6T1E1}) holds.

Since $0<\rho<\rho_0$ is equivalent to $0<s<\rho_0\gamma^{-\frac{p-1}{2}}$ and $\rho_0\gamma^{-\frac{p-1}{2}}\downarrow 0$ $(\gamma\rightarrow\infty)$, (\ref{S6T1E1}) implies that 
\begin{equation}\label{S6T1E2}
\calZ_I[u^*(\,\cdot\,)-u(\,\cdot\,,\gamma)]\rightarrow\infty\quad (\gamma\rightarrow\infty).
\end{equation}
Since each zero of $u^*(\,\cdot\,)-u(\,\cdot\,,\gamma)$ is simple, the number of the zeros in $I$ is preserved provided that another zero does not enter $I$ from $\partial I$. Since $u^*(0)-u(0,\gamma)=\infty$, a zero cannot enter $I$ from $s=0$.
Since (\ref{S6T1E2}) says that the number of the zeros increases, a simple zero enter $I$ from $s=\sqrt{\lambda_n^*}$ infinitely many times as $\gamma\rightarrow\infty$.
Therefore, there exists a sequence of large numbers $\{\gamma_j\}_{j=1}^{\infty}$ $(\gamma_1<\gamma_2<\cdots\rightarrow \infty)$ such that $u(\sqrt{\lambda_n^*},\gamma_j)=u^*(\sqrt{\lambda_n^*})$ and
\[
u_s(\sqrt{\lambda_n^*},\gamma_j)
\begin{cases}
<0, & (j\in\{1,3,5,\cdots\}),\\
>0, & (j\in\{2,4,6,\cdots\}).
\end{cases}
\]
Note that $u_s(\sqrt{\lambda_n^*},\gamma_j)\neq 0$, because of the simplicity of the zero.
We consider only the case where $n$ is odd, since the proof of the case where $n$ is even is similar to this case.
In the proof of Theorem~\ref{S5T2} we already saw that $u(s,\gamma_j)$ does not have a critical point in $(0,\frac{\tilde{s}_1}{2})$.
$\sqrt{\lambda_n^*}$ is a local minimum point of $u^*$.
Because of Theorem~\ref{S5T1}, $u(s,\gamma_j)$ is close to $u^*(s)$ in $C^2[\frac{\tilde{s}^*_1}{2},\sqrt{\lambda_n^*}]$.
We easily see that the $n$-th positive critical point of $u(s,\gamma)$, which is $\sqrt{\lambda_n(\gamma_j)}$, is close to $\sqrt{\lambda_n^*}$.
We consider the case where $u_s(\sqrt{\lambda_n^*},\gamma_j)<0$.
We show that $\lambda_n(\gamma_j)>\lambda_n^*$.
Suppose the contrary, i.e., $\lambda_n(\gamma_j)<\lambda_n^*$.
Since $u_s(\sqrt{\lambda_n^*},\gamma_j)<0$ and $s=\sqrt{\lambda_n^*}$ is a local minimum point of $u(\sqrt{\lambda_n(\gamma_j)},\gamma_j)$, there is a local maximum point $s^*\in(\sqrt{\lambda_n(\gamma_j)},\sqrt{\lambda_n^*)}$.
Since $s^*$ is a local maximum point, $u(s^*,\gamma_j)>1$ which contradicts Theorem~\ref{S5T1}. Thus, $\lambda_n(\gamma_j)>\lambda_n^*$.

When $u_s(\sqrt{\lambda_n^*},\gamma_j)>0$, we suppose the contrary, i.e., $\lambda_n(\gamma_j)>\lambda_n^*$. Since $\sqrt{\lambda_n(\gamma_j)}$ is a local minimum point, there is a local maximum point $s^*\in (\sqrt{\lambda_n(\gamma_j)},\sqrt{\lambda_n^*})$ and $u(s^*,\gamma_j)>1$.
We obtain a contradiction, because of Theorem~\ref{S5T1}.
Thus, $\lambda_n(\gamma_j)<\lambda_n^*$.

We have proved (\ref{S1E8}), which implies that $\lambda_n(\gamma)$ oscillates around $\lambda_n^*$ infinitely many times as $\gamma\rightarrow\infty$.
\end{proof}

% ****************************************************************************
% ****************************************************************************
% ****************************************************************************
% Section 7
% ****************************************************************************
% ****************************************************************************
% ****************************************************************************
\section{Boundary concentrating solution}
We construct a boundary concentrating solution which is radially increasing when $\lambda$ is large.
We show that this solution belongs to $\calC_1$.
We use different notation only in this section.

Let us compare Theorem~\ref{S77C1}, which is the main result of this section, with the results of Ambrosetti-Maclchiodi-Ni~\cite{AMN03,AMN04}.
In \cite{AMN03,AMN04} authors constructed various concentrating solutions of
\begin{equation}\label{S7E0}
\begin{cases}
\e^2\Delta u-V(|x|)u+u^p=0 & \textrm{in}\ \ \Omega\\
u>0 & \textrm{in}\ \ \Omega
\end{cases}
\end{equation}
under certain conditions on $V$, where $\Omega$ is the unit ball, annulus $\{ a<|x|<1\}$, or $\R^N$.
When $\Omega$ is the ball or annulus, the Dirichlet or Neumann boundary condition is imposed.
Among other things, they have shown that the Neumann problem on $B$ has a solution $u_{\e}$ of (\ref{S7E0}) concentrating on $|x|=r_{\e}$, where $r_{\e}$ is a local maximum for $u_{\e}$ such that $1-r_{\e}\sim\e\log\e$.
They used a finite dimensional reduction of the energy functional which works for general concentrating solutions.
Their solution is different from ours, since our solution concentrates on the boundary.
However, their method is applicable to our case.
In this paper, we use another method.
We find a solution near an approximate solution, using the contraction mapping theorem.

\subsection{One-dimensional problem}
We consider the one-dimensional problem
\begin{equation}\label{S71E1}
\begin{cases}
\e^2w''+f(w)=0,& 0<x<1,\\
w'(0)=w'(1)=0.
\end{cases}
\end{equation}
Let $\bar{w}(y):=w(x)$ and $y:=x/\e$.
The function $\bar{w}(y)$ satisfies
\begin{equation}\label{S71E2}
\begin{cases}
\bar{w}''+f(\bar{w})=0, & 0<y<d_{\e},\\
\bar{w}'(0)=\bar{w}'(d_{\e})=0,
\end{cases}
\end{equation}
where $d_{\e}:=1/\e$.

For $p>1$ the system of equations for $(\bar{w},\bar{z})$ $(\bar{z}:=\bar{w}')$ in the phase plane
\[
\begin{cases}
\bar{w}'=\bar{z}\\
\bar{z}'=-f(\bar{w})
\end{cases}
\]
has a saddle point at $(0,0)$ and a center $(1,0)$.
There is a unique homoclinic solution around the center connecting the saddle to itself.
This homoclinic solution can be written explicitly as
\[
w^*(y):=\left(\frac{p+1}{2}\right)^{\frac{1}{p-1}}\left(\cosh\left(\frac{p-1}{2}y\right)\right)^{-\frac{2}{p-1}}.
\]
From the phase portrait for $(\bar{w},\bar{z})$ it is clear that all the orbits on $\{\bar{w}>0\}$ satisfying the Neumann boundary conditions are inside the region enclosed by the homoclinic orbit and that every orbit in this region is periodic one.
Hence, $\bar{w}$ is a solution of (\ref{S71E2}) if and only if an integral multiple of its half period is equal to the interval length $d_{\e}$.
Now, we find a increasing solution.
Let $\bar{w}(y)$ be a increasing solution that has maximum $\alpha$ and minimum $\beta$.
Then,
\[
0<\beta<1<\alpha<\bar{\alpha}:=\left(\frac{p+1}{2}\right)^{\frac{1}{p-1}}.
\]
Multiplying (\ref{S71E2}) by $\bar{w}_y$ and integrating it, we have
\[
\bar{w}_y^2=F(\bar{w})-F(\beta),\qquad F(\alpha)=F(\beta),\quad\textrm{and}\quad F(\bar{w})=\bar{w}^2-\frac{2}{p+1}\bar{w}^{p+1}.
\]
The half period is given by the integral
\begin{equation}\label{TM}
T(\alpha):=\int_{\beta}^{\alpha}\frac{d\bar{w}}{\sqrt{F(\bar{w})-F(\beta)}}.
\end{equation}
Thus, $\bar{w}$ is a increasing solution of (\ref{S71E2}) if and only if $T(\alpha)=d_{\e}$.
The integral (\ref{TM}) was studied by De Groen and Karadzhov~\cite{DK02}.
Among other things, they obtained
\begin{proposition}\label{S71P1}
There is a small $\e_0>0$ such that the problem (\ref{S71E1}) has a smooth curve of increasing solutions $\{ w(x;\e)\}_{0<\e<\e_0}$, which can be described as a graph of $\e$, satisfying the following:
For $\delta>0$, there exists $\e_1>0$ such that, if $0<\e<\e_1$, then
\begin{equation}\label{S71P1E0}
|\tilde{w}(s;\e)-w^*(s)|<\delta\ \ \textrm{for}\ \ s\in [0,d_{\e}],
\end{equation}
where $\tilde{w}(s;\e):=w(x;\e)$ $(x=1-\e s)$.
Moreover, the first two eigenvalues of the eigenvalue problem
\[
\e^2\phi''+f'(w)\phi=\kappa\phi\ \ \textrm{in}\ \ (0,1),\qquad\phi'=0\ \ \textrm{at}\ \ x=0,1
\]
are
\[
\begin{split}
\kappa_0(\e)&=\frac{(p-1)(p+3)}{4}+O\left( e^{-\frac{2}{\e}}\right),\\
\kappa_1(\e)&\left\{
\begin{array}{ll}
=-\frac{(p-1)(5-p)}{4}+O\left(e^{-\frac{3-p}{\e}}\right)& (1<p<3),\\
\le -1+O\left(e^{-\frac{2}{\e}}\right) & (3\le p).
\end{array}
\right.
\end{split}
\]
In particular, if $\e>0$ is small, then $\kappa_1(\e)<0<\kappa_0(\e)$ and $w(x;\e)$ is nondegenerate.
\end{proposition}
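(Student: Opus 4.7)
The plan is to work in the $(\bar w, \bar w')$ phase plane and use the explicit time map (\ref{TM}). First I would study $T(\alpha)$ for $\alpha \in (1, \bar\alpha)$: since $F(\bar\alpha) = F(0) = 0$ and the $\alpha = \bar\alpha$ level is the homoclinic loop, $T(\alpha) \to \infty$ as $\alpha \uparrow \bar\alpha$, while $T(\alpha) \to \pi/\sqrt{p-1}$ as $\alpha \downarrow 1$ (the linearized period at the center). Differentiating (\ref{TM}) after the substitution $\bar w = \beta + (\alpha - \beta)t$ shows $T'(\alpha) > 0$ in a left neighbourhood of $\bar\alpha$, so by the implicit function theorem the equation $T(\alpha) = d_\e = 1/\e$ admits a unique smooth solution $\alpha = \alpha(\e)$ for all sufficiently small $\e > 0$, producing a smooth curve of increasing solutions of (\ref{S71E1}).

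To establish the uniform profile convergence (\ref{S71P1E0}), observe that $\alpha(\e) \to \bar\alpha$ and $\beta(\e) \to 0$ (since $F(\alpha) = F(\beta)$ and $F^{-1}(0) \cap [0,\bar\alpha] = \{0, \bar\alpha\}$). Continuous dependence on initial data for the autonomous ODE $\bar w'' + f(\bar w) = 0$ gives $\tilde w(s;\e) \to w^*(s)$ in $C^1_{loc}[0,\infty)$. Uniformity on the full interval $[0, d_\e]$ follows from the exponential decay $w^*(s) = O(e^{-s})$ combined with the monotonicity $0 < \tilde w(s;\e) \le \alpha(\e) \le \bar\alpha$ and the fact that $\tilde w(d_\e) = \beta(\e) \to 0$; the energy identity $(\tilde w')^2 = F(\tilde w) - F(\beta)$ forces $\tilde w$ to remain exponentially small in $s$ where $w^*$ is small.

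For the spectral asymptotics, rescale the eigenvalue problem to $s = (1-x)/\e$, obtaining the operator $\mathcal{L}_\e := -\partial_s^2 - f'(\tilde w(s;\e))$ on $[0, d_\e]$ with Neumann conditions. As $\e \downarrow 0$, $\mathcal{L}_\e$ converges to $\mathcal{L}_\infty := -\partial_s^2 + 1 - p(w^*)^{p-1}$ on $[0, \infty)$, whose spectrum I compute explicitly: after the change of variable $z = (p-1)s/2$, $\mathcal{L}_\infty$ becomes a P\"oschl--Teller operator with parameter $l = (p+1)/(p-1)$, and its bound states on $\R$ have eigenvalues $1 - \bigl((p+1-(p-1)n)/2\bigr)^2$ for $n = 0, 1, \ldots, \lfloor l\rfloor$. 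The Neumann condition at $s = 0$ selects only the \emph{even} eigenfunctions, leaving $n = 0$ with eigenvalue $-(p-1)(p+3)/4$ and, when $1 < p < 3$, $n = 2$ with eigenvalue $(p-1)(5-p)/4 < 1$; for $p \ge 3$ no second even bound state exists and the second Neumann eigenvalue on $[0, d_\e]$ approaches the bottom of the essential spectrum, namely $1$. Translating via $\kappa = -\mu$ gives the asserted leading orders.

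The main obstacle will be quantifying the exponentially small remainders. The ground state of $\mathcal{L}_\infty$ decays like $e^{-\sqrt{|\mu_0|}\,s}$ with $\sqrt{|\mu_0|} \ge 1$, so truncating it to $[0, d_\e]$ and inserting into the Rayleigh quotient for $\mathcal{L}_\e$ produces boundary errors of order $e^{-2d_\e} = e^{-2/\e}$, yielding $\kappa_0 = (p-1)(p+3)/4 + O(e^{-2/\e})$. For $1 < p < 3$ the second even eigenfunction decays only like $e^{-\sqrt{1-(p-1)(5-p)/4}\,s} = e^{-(3-p)s/2}$, so the corresponding truncation error is $O(e^{-(3-p)/\e})$, giving the stated remainder for $\kappa_1$. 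For $p \ge 3$ the second Neumann eigenfunction is well approximated by $\cos(\pi s/d_\e)$, which has Rayleigh quotient $-1 + O(e^{-2/\e})$ against $\mathcal{L}_\e$ after controlling the localized contribution of $p(w^*)^{p-1}$, giving $\kappa_1(\e) \le -1 + O(e^{-2/\e})$. The resulting separation $\kappa_1(\e) < 0 < \kappa_0(\e)$ immediately implies nondegeneracy of $w(\cdot;\e)$, since $0$ then lies in the spectral gap of $\mathcal{L}_\e$.
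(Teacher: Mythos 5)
Your overall strategy (time map in the phase plane for existence, then spectral asymptotics via the limiting half-line operator) is in fact the right one and is essentially what De~Groen--Karadzhov do; note that the paper itself does not prove Proposition~\ref{S71P1} at all but quotes it from \cite{DK02}, so you are attempting a self-contained proof of a result whose proof occupies a substantial part of that reference. The existence/convergence half of your argument is acceptable in outline (though the monotonicity $T'(\alpha)>0$ near $\bar\alpha$ is asserted rather than proved; one needs the logarithmic asymptotics of $T$ near the homoclinic to justify both monotonicity and the smooth parametrization by $\e$), and your P\"oschl--Teller computation of the spectrum of the limit operator is correct, including the parity selection by the Neumann condition and the threshold $p=3$.

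The genuine gap is in the eigenvalue asymptotics with exponentially small remainders, which is the heart of the proposition. Inserting a single truncated test function into the Rayleigh quotient gives a one-sided bound only: with the eigenvalues $\kappa_0>\kappa_1$ of $L=\e^2\frac{d^2}{dx^2}+f'(w)$ ordered decreasingly, a trial function yields a \emph{lower} bound for $\kappa_0$ (and, after orthogonalization against the true ground state, for $\kappa_1$), so your scheme never produces the upper bounds $\kappa_0\le\kappa_0^*+O(e^{-2/\e})$ and, for $p\ge 3$, the stated inequality $\kappa_1\le -1+O(e^{-2/\e})$ --- in that last case your argument runs in the wrong direction, since it would require bounding the quadratic form on the whole orthogonal complement of the ground state, not evaluating it on $\cos(\pi s/d_\e)$. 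Moreover the claimed value of that Rayleigh quotient is wrong: the kinetic term contributes $O(\e^2)$ and the localized potential $p\tilde w^{p-1}$ (of unit mass against $\int\phi^2\sim d_\e/2$) contributes $O(\e)$, so one gets $-1+O(\e)$, not $-1+O(e^{-2/\e})$. Finally, the dominant error source for the exponential remainders is not the truncation of the limit eigenfunctions but the difference between the finite-interval profile $\tilde w(\cdot;\e)$ and the homoclinic $w^*$ entering the potential (governed by $F(\beta(\e))$, with $\beta(\e)=\tilde w(d_\e;\e)\sim e^{-1/\e}$), which your proposal never controls; also, as a small slip, the ground state decays at rate $\sqrt{1+|\kappa_0^*|}=(p+1)/2$, not $\sqrt{|\kappa_0^*|}$, which is $<1$ for $p$ close to $1$. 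To close these gaps one needs genuinely two-sided spectral estimates (e.g.\ the time-map/Pr\"ufer or projection arguments of \cite{DK02}); as written, the spectral half of the proposal does not establish the stated asymptotics, only the qualitative fact $\kappa_1<0<\kappa_0$ would remain out of reach for the same reasons in the $p\ge3$ case.
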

Using (\ref{S71P1E0}), we immediately have
\begin{corollary}\label{E_e}
Let $w(x;\e)$ be the increasing solution. Then, $w$ satisfies $(\e w')^2-F(w)=-F(w(0;\e))$ for $x\in (0,1)$, and $-F(w(0;\e))\rightarrow 0$ ($\e\downarrow 0$).
Moreover, $F(w(\,\cdot\,;\e))\xrightarrow[]{\e\downarrow 0}0$ pointwisely in $[0,1]$.
\end{corollary}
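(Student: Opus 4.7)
The plan is to prove the three assertions separately but with the first identity doing the main work. The energy identity $(\e w')^2-F(w)=-F(w(0;\e))$ is a standard first integral. I would multiply the ODE $\e^2 w''+f(w)=0$ by $w'$ and observe that, since $F(w)=w^2-\frac{2}{p+1}w^{p+1}$ satisfies $F'(w)=2w-2w^p=-2f(w)$, the product becomes $(\e^2 w'' w')=\frac12\frac{d}{dx}F(w)$, i.e.\ $\frac{d}{dx}\left[(\e w')^2-F(w)\right]=0$. Integrating from $0$ to $x$ and using the Neumann condition $w'(0)=0$ yields the stated identity on $(0,1)$.

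For the limit $-F(w(0;\e))\to 0$ as $\e\downarrow 0$, I would exploit the uniform approximation (\ref{S71P1E0}): in the stretched variable, $w(0;\e)=\tilde{w}(d_{\e};\e)$, and Proposition~\ref{S71P1} gives $|\tilde{w}(d_{\e};\e)-w^*(d_{\e})|<\delta$ for all sufficiently small $\e$. Since $w^*(y)=\left(\tfrac{p+1}{2}\right)^{1/(p-1)}\!\left(\cosh\tfrac{p-1}{2}y\right)^{-2/(p-1)}$ decays exponentially as $y\to\infty$ and $d_{\e}=1/\e\to\infty$, a diagonal choice of $\delta\to 0$ forces $w(0;\e)\to 0$, and then $F(w(0;\e))\to F(0)=0$ by continuity of $F$.

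For the pointwise convergence $F(w(\,\cdot\,;\e))\to 0$ on $[0,1]$, I would split into two cases. For fixed $x\in[0,1)$ the stretched variable is $s=(1-x)/\e\to\infty$; the same uniform estimate $|\tilde{w}(s;\e)-w^*(s)|<\delta$ together with $w^*(s)\to 0$ yields $w(x;\e)\to 0$, and hence $F(w(x;\e))\to 0$. For $x=1$ we have $\tilde{w}(0;\e)=w(1;\e)\to w^*(0)=\bar{\alpha}=\left(\tfrac{p+1}{2}\right)^{1/(p-1)}$, and by direct computation $F(\bar{\alpha})=\bar{\alpha}^2\bigl(1-\tfrac{2}{p+1}\bar{\alpha}^{p-1}\bigr)=0$, so the limit is again $0$.

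No serious obstacle is anticipated: the first part is routine ODE manipulation, and the remaining parts are direct applications of the approximation (\ref{S71P1E0}) together with the explicit decay of the homoclinic $w^*$. The only subtlety to keep in mind is that the uniform bound in Proposition~\ref{S71P1} is on the whole interval $[0,d_{\e}]$, which is precisely what makes the boundary evaluation $s=d_{\e}$ legitimate; if one had only $C^0_{loc}$-convergence this step would fail.
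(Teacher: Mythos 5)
Your proposal is correct and follows essentially the same route as the paper: the identity is the standard first integral obtained by multiplying $\e^2w''+f(w)=0$ by $w'$ (the paper derives it in the stretched variable just before Proposition~\ref{S71P1}, with $\beta=w(0;\e)$), and the two limits are read off from the uniform estimate (\ref{S71P1E0}) together with the decay of the homoclinic $w^*$, which is exactly how the paper treats the corollary as immediate. Your observation that the bound being uniform on all of $[0,d_{\e}]$ is what legitimizes evaluating at $s=d_{\e}$ is precisely the relevant point; alternatively, the value at $x=1$ could also be handled by noting $F(w(1;\e))=F(w(0;\e))$ from the identity itself.
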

We recall some known results of the \lq\lq limiting" operator $L^*:=\frac{d^2}{d s^2}+f'(w^*(s))$ on $\R_+$ with the Neumann boundary condition.
The operator $L^*$ has a continuous spectrum $(-\infty,-1]$ and may have discrete eigenvalues outside $(-\infty,-1]$ (\cite[p. 140]{H81}).
We study the nonnegative eigenvalues of $L^*$.
\begin{proposition}
The eigenvalue problem
\begin{equation}\label{S71P3Eq0}
L^*\phi^*=\kappa^*\phi^*\ \ \textrm{in}\ \ \R_+,\qquad(\phi^*)'(0)=0,\qquad{\phi^*}\in H^1(\R_+)
\end{equation}
has the unique nonnegative eigenvalue. Moreover, this eigenvalue is the first one which is simple, $\kappa^*_0:=(p-1)(p+3)/4$, and the associated eigenfunction is $\phi^*_0(s):=(w^*)^{(p+1)/2}$.
\end{proposition}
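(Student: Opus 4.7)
The plan is to combine an explicit computation with a symmetrization-plus-Sturm-oscillation argument. First I would verify by direct substitution that $\phi_0^* := (w^*)^{(p+1)/2}$ is an $H^1(\R_+)$ eigenfunction of $L^*$ with eigenvalue $\kappa_0^* := (p-1)(p+3)/4$ satisfying the Neumann condition. Second I would establish uniqueness of the nonnegative eigenvalue by passing to the full line and invoking Sturm oscillation for one-dimensional Schr\"odinger-type operators together with the identity $L^*(w^*)' = 0$.

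For the verification step, set $\phi = (w^*)^q$ and use the first integral $((w^*)')^2 = (w^*)^2 - \tfrac{2}{p+1}(w^*)^{p+1}$ (valid because $w^*,(w^*)' \to 0$ at infinity) together with $(w^*)'' = w^* - (w^*)^p$. A direct computation then gives
\[
L^*\bigl((w^*)^q\bigr) = (q^2-1)(w^*)^q + \Bigl[\,p - \tfrac{q(2q+p-1)}{p+1}\,\Bigr](w^*)^{p+q-1}.
\]
The coefficient of $(w^*)^{p+q-1}$ vanishes precisely when $2q^2 + (p-1)q - p(p+1) = 0$, whose positive root is $q = (p+1)/2$, yielding eigenvalue $q^2 - 1 = (p-1)(p+3)/4 = \kappa_0^*$. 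The Neumann condition follows from $(w^*)'(0) = 0$, and $\phi_0^* \in H^1(\R_+)$ since $w^*$ decays exponentially.

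For uniqueness, I would extend $w^*$ to an even function on $\R$ (the explicit profile $((p+1)/2)^{1/(p-1)}\cosh^{-2/(p-1)}(\tfrac{p-1}{2}s)$ is manifestly even), so that $f'(w^*)$ is even and the full-line operator $\tilde L^* := d^2/ds^2 + f'(w^*)$ on $H^1(\R)$ has essential spectrum $(-\infty,-1]$. By standard Sturm oscillation for one-dimensional Schr\"odinger-type operators with potential tending to a limit at infinity, the discrete eigenvalues are simple, ordered as $\kappa_0 > \kappa_1 > \cdots > -1$, and the $k$-th eigenfunction has exactly $k$ zeros on $\R$. Because the potential is even, eigenfunctions have alternating parities starting with even, and the even ones correspond under restriction to the Neumann eigenfunctions of $L^*$ on $\R_+$. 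Now differentiating $(w^*)'' + f(w^*) = 0$ gives $\tilde L^*(w^*)' = 0$; since $(w^*)'$ is odd, exponentially decaying, and vanishes only at $s = 0$, it is the $1$-zero eigenfunction, so $\kappa_1 = 0$. The $0$-zero ground-state eigenfunction must be a positive multiple of $\phi_0^*$, so $\kappa_0 = \kappa_0^*$. Any further Neumann eigenvalue of $L^*$ comes from an even eigenfunction of $\tilde L^*$ with $\geq 2$ zeros, hence equals some $\kappa_{2m} < \kappa_1 = 0$. Thus $\kappa_0^*$ is the unique nonnegative Neumann eigenvalue and is simple.

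The main technical point to pin down is the Sturm-oscillation/parity classification for $\tilde L^*$: specifically, that every discrete eigenvalue is simple, that the $k$-th eigenfunction has exactly $k$ zeros on $\R$, and that each $H^1(\R_+)$ Neumann eigenfunction of $L^*$ extends by even reflection to an $H^1(\R)$ eigenfunction of $\tilde L^*$. The extension is immediate from the Neumann condition; the oscillation/parity statements are classical for Schr\"odinger operators with smooth potentials converging at $\pm\infty$, but should be invoked with care. As a sanity check, one can note that the potential is of P\"oschl-Teller type and the spectrum can in fact be computed explicitly, which independently confirms that the only nonnegative eigenvalue is $\kappa_0^*$.
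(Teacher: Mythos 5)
Your proposal is correct, but note that the paper itself does not prove this proposition: it is stated as a recollection of known facts about the limiting operator $L^*$ (the essential spectrum $(-\infty,-1]$ is quoted from Henry's book, and the eigenvalue structure of the soliton linearization -- a P\"oschl--Teller type potential -- is taken as known, consistent with the later explicit display of $\phi_1^*$ and $\kappa_1^*$). Your argument therefore supplies a self-contained proof where the paper merely cites the literature, and it is sound. The algebra checks out: with $((w^*)')^2=(w^*)^2-\tfrac{2}{p+1}(w^*)^{p+1}$ and $(w^*)''=w^*-(w^*)^p$ one gets exactly $L^*((w^*)^q)=(q^2-1)(w^*)^q+\bigl[p-\tfrac{q(2q+p-1)}{p+1}\bigr](w^*)^{p+q-1}$, the quadratic $2q^2+(p-1)q-p(p+1)=0$ has discriminant $(3p+1)^2$ and positive root $q=(p+1)/2$, and $q^2-1=\kappa_0^*$; incidentally the same identity with $q=(3-p)/2$ reproduces the paper's $\phi_1^*,\kappa_1^*$. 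The uniqueness step is the standard route: even reflection turns Neumann eigenfunctions on $\R_+$ into even $H^1(\R)$ eigenfunctions of $\tilde L^*$ (no distributional defect at $0$ because the reflected function is $C^1$ there), discrete eigenvalues in one dimension are simple, the oscillation theorem -- applied to the Schr\"odinger operator $-\tilde L^*$, since the discrete spectrum of $\tilde L^*$ sits \emph{above} the essential spectrum $(-\infty,-1]$ -- gives the zero count and the parity alternation, $\tilde L^*(w^*)'=0$ with $(w^*)'$ odd and vanishing only at $s=0$ identifies $\kappa_1=0$, and positivity of $\phi_0^*$ forces it to be the ground state, so every even-indexed eigenvalue other than $\kappa_0=\kappa_0^*$ lies below $0$. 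The only points to state carefully are the ones you already flag (simplicity, zero count, parity, and the reflection), all classical; so your proof is complete and in fact more informative than the paper's citation-only treatment.
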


By direct calculation we see that
\[
\phi^*_1:=(w^*)^{\frac{3-p}{2}}-\frac{p+3}{2(p+1)}(w^*)^{\frac{p+1}{2}}\quad\textrm{and}\quad\kappa^*_1:=-\frac{(p-1)(5-p)}{4}
\]
satisfy
\[
L^*\phi^*_1=\kappa^*_1\phi^*_1\ \ \textrm{in}\ \ \R_+,\qquad (\phi^*)'_1(0)=0,\qquad\phi^*_1\in H^1(\R_+)\ (1<p<3).
\]
It is known that if $1<p<3$, then $\kappa^*_1$ is the second eigenvalue and that if $p\ge 3$, then $L^*$ has only one eigenvalue above $-1$.
In particular, $0$ is not an eigenvalue and $L^*$ is invertible.
\begin{corollary}\label{kappa_1}
For each $p>1$, there is $\delta>0$ such that $L^*$ has no eigenvalue in $[-\delta,\infty)$ except $\kappa^*_0$.
\end{corollary}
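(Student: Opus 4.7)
The plan is to extract the conclusion directly from the spectral inventory produced in the proposition immediately preceding this corollary. That proposition, together with the fact (cited from \cite{H81}) that the essential spectrum of $L^*$ equals $(-\infty,-1]$, tells us that the portion of $\sigma(L^*)$ lying above $-1$ consists only of the explicit discrete eigenvalues $\kappa^*_0=(p-1)(p+3)/4$ (always present and strictly positive) and, when $1<p<3$, the further eigenvalue $\kappa^*_1=-(p-1)(5-p)/4$ (strictly negative); for $p\ge 3$ there is no eigenvalue of $L^*$ in $(-1,\infty)$ other than $\kappa^*_0$.

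Given this, I would simply select
\[
\delta:=
\begin{cases}
\tfrac{1}{2}\min\bigl\{1,\,(p-1)(5-p)/4\bigr\}, & 1<p<3,\\
\tfrac{1}{2}, & p\ge 3.
\end{cases}
\]
With this choice $-\delta>-1$ in every case, so $[-\delta,\infty)$ does not meet the essential spectrum; and $-\delta>\kappa^*_1$ when $1<p<3$, so $[-\delta,\infty)$ also excludes $\kappa^*_1$. Since every spectral point of $L^*$ has been shown to lie in $(-\infty,-1]\cup\{\kappa^*_1\}\cup\{\kappa^*_0\}$ (the middle factor present only when $1<p<3$), no eigenvalue of $L^*$ other than $\kappa^*_0$ survives in $[-\delta,\infty)$.

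The only conceivable obstacle is the possibility that some extra eigenvalue hides in $(-1,0)$ for $p\ge 3$, or in $(-1,\kappa^*_1)\cup(\kappa^*_1,0)$ for $1<p<3$. This possibility is ruled out by the preceding proposition, which enumerates all eigenvalues above $-1$: it names $\kappa^*_0$ the unique nonnegative eigenvalue, and, in the range $1<p<3$, singles out $\kappa^*_1$ as the second eigenvalue with no eigenvalues in between. Consequently the corollary requires no analytic work beyond the choice of $\delta$, and its entire content lies in the spectral information already established.
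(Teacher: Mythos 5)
Your proposal is correct and is essentially the paper's own (implicit) argument: the paper states the corollary with no separate proof, treating it as an immediate consequence of the facts just recorded --- the essential spectrum is $(-\infty,-1]$, $\kappa^*_0$ is the unique nonnegative eigenvalue, $\kappa^*_1<0$ is the second eigenvalue when $1<p<3$, and for $p\ge 3$ there is only one eigenvalue above $-1$. Your explicit choice of $\delta$ just makes this quantitative; the only nitpick is that the statements about $\kappa^*_1$ and about $p\ge3$ come from the cited known results in the surrounding text rather than from the proposition itself.
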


\subsection{Notation}
{}From now on we assume $q\in (\max\{N/2,2\},N)$.
Under this assumption the inclusion $W^{2,q}(B)\hookrightarrow C^0(B)$ is continuous ($q>N/2$) and $1/r\in L^q(B)$ ($q<N$).
Let $\calR$ denote the set of the radial functions on $B$.
Let $\bbX:=\{u\in W^{2,q}(B)\cap\calR;\ \frac{\partial}{\partial\nu}u=0\ \textrm{on}\ \partial B\}$ and $\bbY:=L^q(B)\cap\calR$.
By $\calB(\bbX,\bbY)$ we denote the space of the bounded operators from $\bbX$ to $\bbY$ equipped with the operator norm $\left\|\,\cdot\,\right\|_{\calB(\bbX,\bbY)}$.
In this section we work on $\bbX$ and $\bbY$ and define $f(U):=-U+U|U|^{p-1}$ in order that $f$ is defined in $\R$.
The regular radial solutions of (\ref{N}) satisfy
\begin{equation}\label{R}
\begin{cases}
\e^2\left(U''+\frac{N-1}{r}U'\right)+f(U)=0, & 0<r<1,\\
U'(0)=U'(1)=0.
\end{cases}
\end{equation}
Let $w(r)$ be the increasing solution given in Proposition~\ref{S71P1}.
We find a solution of (\ref{R}) near $w(r)$.
Substituting $U(r)=v(r)+w(r)$ into (\ref{R}), we obtain the following equation of the error term $v$:
\begin{equation}\label{EE}
\calL v+\calM w+\calN (v,w)=0,
\end{equation}
where
\begin{align*}
\calL v&:=\e^2\left(v''+\frac{N-1}{r}v'\right)+f'(w)v,\\
\calM w&:=\e^2\frac{N-1}{r}w',\\
\calN (v,w)&:=f(v+w)-f(w)-f'(w)v.
\end{align*}
$\calL$ is an approximation of the linearized operator $\e^2\left(\frac{d^2}{dr^2}+\frac{N-1}{r}\frac{d}{dr}\right)+f'(U)$.
We easily see that $\calL\in\calB (\bbX,\bbY)$ and that $\calN(\,\cdot\,,w)$ is a nonlinear mapping from $\bbX$ to $\bbY$, since the inclusion $\bbX\hookrightarrow C^0(B)$ is continuous.

\subsection{A priori estimates}
We use the following a priori estimate in order to use the dominated convergence theorem.
\begin{proposition}\label{S73P1}
Let $\Omega$ be a bounded domain. Let $\phi\in C^0(\Omega)\cap H^1(\Omega)$. If
\[
\e^2\Delta\phi-V(x)\phi=0\ \ \textrm{in}\ \ \Omega,
\]
and if $V(x)>C_0>0$ in $\Omega$, then there is $C_1>0$ independent of $\e$ such that
\[
|\phi(x)|\le 2\left\|\phi\right\|_{\infty}\exp\left(-\frac{C_1{\rm dist} (x,\partial\Omega)}{\e}\right),
\]
where ${\rm dist}(x,\partial\Omega)$ is the distance from $x$ to $\partial\Omega$.
\end{proposition}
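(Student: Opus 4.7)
The plan is a maximum-principle comparison against a radial Bessel-type barrier. Fix $x_0\in\Omega$ arbitrary and set $d:=\mathrm{dist}(x_0,\partial\Omega)$, so that the open ball $B_d(x_0)$ lies inside $\Omega$. It suffices to exhibit a nonnegative radial function $\Psi$ on $\overline{B_d(x_0)}$ with $\Psi\equiv\|\phi\|_\infty$ on $\partial B_d(x_0)$ and $\e^2\Delta\Psi-V\Psi\le 0$ in $B_d(x_0)$: the weak maximum principle for $L:=\e^2\Delta-V$ (valid because $V>0$ rules out any interior negative minimum of $\Psi\pm\phi$) applied to $\Psi\pm\phi$ then yields $|\phi|\le\Psi$ in $B_d(x_0)$, and in particular $|\phi(x_0)|\le\Psi(x_0)$.

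The barrier I would use is $\Psi(x):=\|\phi\|_\infty\,\Phi(|x-x_0|)/\Phi(d)$, where $\Phi=\Phi(r)$ is the unique positive radial solution of
\[
\e^2\left(\Phi''+\frac{N-1}{r}\Phi'\right)-C_0\Phi=0,\qquad \Phi(0)=1,\quad \Phi'(0)=0,
\]
essentially the modified Bessel function of order $(N-2)/2$ in the variable $\sqrt{C_0}\,r/\e$. Then $\Phi$ is strictly increasing, $\e^2\Delta\Psi=C_0\Psi$, and because $V\ge C_0$ one has $\e^2\Delta\Psi-V\Psi=(C_0-V)\Psi\le 0$; the comparison therefore yields $|\phi(x_0)|\le\|\phi\|_\infty/\Phi(d)$.

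To close the argument I would combine the monotonicity bound $\Phi(r)\ge 1$ with the standard asymptotic $\Phi(r)\sim c_N(\e/r)^{(N-1)/2}\exp(\sqrt{C_0}\,r/\e)$ as $r/\e\to\infty$. For any fixed $C_1\in(0,\sqrt{C_0})$ the function $t\mapsto e^{C_1 t}/\Phi(\e t)$ is continuous on $[0,\infty)$ with value $1$ at $t=0$ and limit $0$ at $t=\infty$, hence is bounded by a finite constant $a(C_1,N)$; this gives $1/\Phi(d)\le a(C_1,N)\,e^{-C_1 d/\e}$ uniformly in $\e>0$ and $d\ge 0$. As $C_1\downarrow 0$ the maximizing $t$ tends to $0$ and $a(C_1,N)\to 1$, so choosing $C_1$ small enough makes $a\le 2$, yielding the claimed estimate since $x_0$ was arbitrary. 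The main technical obstacle is exactly this bookkeeping for the Bessel asymptotic and the absorption of the polynomial prefactor $(d/\e)^{(N-1)/2}$ into the exponential, which forces $C_1<\sqrt{C_0}$ strictly; conceptually, nothing deeper than the positivity of $-\e^2\Delta+V$ is required.
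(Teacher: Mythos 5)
Your argument is correct, and it is worth noting that the paper does not actually prove Proposition~\ref{S73P1}: it quotes it from Fife \cite[Lemma~4.2]{F73} (remarking only that the proof extends to the generalized setting), so what you have written is a self-contained replacement in the same comparison-principle spirit as the cited classical result. Your choice of barrier is the exact radial solution $\Phi$ of $\e^2(\Phi''+\frac{N-1}{r}\Phi')=C_0\Phi$ (a rescaled modified Bessel function), which is clean: $\e^2\Delta\Psi-V\Psi=(C_0-V)\Psi\le 0$, the boundary comparison on $\partial B_d(x_0)$ is immediate, and the scaling invariance $\Phi(\e t)=g(t)$ reduces the uniformity in $\e$ to a one-variable estimate $\sup_{t\ge 0}e^{C_1t}/g(t)\le a(C_1)$ with $a(C_1)\to 1$ as $C_1\downarrow 0$ (this last limit is most safely justified by noting $g\ge 1$, $g'/g\to\sqrt{C_0}$, so $h(t)=e^{C_1t}/g(t)$ is decreasing past a time $T_1$ independent of $C_1$ and $\sup h\le e^{C_1T_1}$), which gives the constant $2$ in the statement. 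Two small points deserve one line each in a written version. First, $\phi$ is only $C^0\cap H^1$ and solves the equation in the weak sense, so the comparison should be run as a weak maximum principle: test the weak inequality for $w=\Psi\pm\phi$ with $w^-\in H^1_0(B_d(x_0))$ and use $V\ge 0$ to conclude $w^-\equiv 0$; the phrase ``rules out an interior negative minimum'' is the classical $C^2$ argument and does not literally apply here. Second, $\overline{B_d(x_0)}$ may touch $\partial\Omega$, where $\phi$ need not extend continuously; either argue via traces (the a.e.\ bound $|\phi|\le\|\phi\|_\infty$ passes to the trace on $\partial B_d(x_0)$) or simply work on $B_{d-\delta}(x_0)\subset\subset\Omega$ and let $\delta\downarrow 0$, using continuity of $\Phi$. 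With these routine adjustments your proof is complete and yields $C_1$ depending only on $C_0$ and $N$, independent of $\e$, as required.
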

When $\phi\in C^2(\Omega)$, Proposition~\ref{S73P1} was proved in \cite[Lemma~4.2]{F73} for a general operator. The proof of \cite[Lemma~4.2]{F73} works in the generalized sense, hence Proposition~\ref{S73P1} holds. This proposition was also used in \cite{NT91}.

\begin{lemma}\label{AE1}
Let $(\kappa_{\e},\phi)\in \R\times\bbX$ ($\|\phi\|_{\infty}=1$) be the eigenpair of
\[
\begin{cases}
\calL\phi=\kappa_{\e} \phi & \textrm{in}\ \ B,\\
\frac{\partial}{\partial\nu}\phi=0 & \textrm{on}\ \ \partial B.
\end{cases}
\]
If there is a small $\delta_0>0$ such that $|\kappa_{\e}|<\delta_0$ for small $\e>0$, then there are $C_0>0$ and $C_1>0$ independent of $\e>0$ such that
\[
|\phi(r)|\le C_0e^{-C_1(1-r)/\e}\ \ (0\le r\le 1).
\]
\end{lemma}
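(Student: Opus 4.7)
The plan is to cast the eigenvalue equation as a Schr\"odinger-type equation and apply the Agmon-style decay estimate of Proposition~\ref{S73P1} on a large inner ball where the effective potential is uniformly positive. Using $f'(w)=-1+pw^{p-1}$, the relation $\calL\phi=\kappa_\e\phi$ rewrites as
\[
\e^2\Delta\phi-V_\e(r)\phi=0,\qquad V_\e(r):=\kappa_\e-f'(w(r;\e))=\kappa_\e+1-p\,w(r;\e)^{p-1}.
\]
The potential $V_\e$ is not positive on the whole ball: near $r=1$ the value $w$ approaches the homoclinic peak $\bar\alpha=((p+1)/2)^{1/(p-1)}$, so $p\,w^{p-1}$ dominates. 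Proposition~\ref{S73P1} thus cannot be applied directly on $B$, and the plan is to peel off a thin boundary layer of width $O(\e)$.

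The location of this layer is supplied by Proposition~\ref{S71P1}. The rescaled profile $\tilde w(s;\e):=w(1-\e s;\e)$ converges uniformly on $[0,d_\e]$ to the explicit homoclinic $w^*(s)=\bar\alpha\bigl(\cosh((p-1)s/2)\bigr)^{-2/(p-1)}$, which decays exponentially as $s\to\infty$. Assuming without loss of generality that $\delta_0\le 1/4$, I would fix $S_0>0$ large enough (and independent of $\e$) that $p\,w^*(s)^{p-1}\le 1/4$ for $s\ge S_0$; the uniform convergence then upgrades this to $p\,w(r;\e)^{p-1}\le 1/2$ on $[0,1-\e S_0]$ for all sufficiently small $\e$. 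Consequently $V_\e(r)\ge 1/4$ throughout the inner ball $B_\e:=\{|x|\le 1-\e S_0\}$.

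With this uniform lower bound in hand, I would apply Proposition~\ref{S73P1} to $\phi$ on $\Omega=B_\e$; the hypothesis $\phi\in C^0(\overline\Omega)\cap H^1(\Omega)$ holds because $\phi\in\bbX\subset W^{2,q}(B)\hookrightarrow C^0(B)\cap H^1(B)$. The proposition yields a constant $\tilde C_1>0$ depending only on the lower bound $1/4$, with
\[
|\phi(r)|\le 2\|\phi\|_\infty\exp\!\left(-\frac{\tilde C_1(1-\e S_0-r)}{\e}\right)=2e^{\tilde C_1 S_0}\,e^{-\tilde C_1(1-r)/\e}\qquad(0\le r\le 1-\e S_0).
\]
On the complementary annulus $1-\e S_0<r\le 1$ the exponent satisfies $(1-r)/\e<S_0$, so $e^{-\tilde C_1(1-r)/\e}>e^{-\tilde C_1 S_0}$, and the trivial inequality $|\phi|\le\|\phi\|_\infty=1\le e^{\tilde C_1 S_0}e^{-\tilde C_1(1-r)/\e}$ handles this range. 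Combining the two regions gives the conclusion with $C_0=2e^{\tilde C_1 S_0}$ and $C_1=\tilde C_1$.

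The only real obstacle is producing the uniform positive lower bound $V_\e\ge 1/4$ away from the $O(\e)$-thick boundary layer; this is precisely where Proposition~\ref{S71P1} and the explicit exponential decay of $w^*$ are essential. Once that lower bound is secured, the exponential decay estimate of Proposition~\ref{S73P1} does the rest without further input.
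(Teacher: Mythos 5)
Your proposal is correct and follows essentially the same route as the paper: both use Proposition~\ref{S71P1} to make the effective potential $\kappa_\e-f'(w)$ uniformly positive outside an $O(\e)$ boundary layer, apply the decay estimate of Proposition~\ref{S73P1} on that inner ball, and then absorb the thin layer near $r=1$ by enlarging the constant using $\|\phi\|_\infty=1$. The only difference is cosmetic (explicit bounds $1/4$, $1/2$ versus the paper's unnamed constants $C_2$, $C_3$, $\delta_1$).
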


\begin{proof}
We see by Proposition~\ref{S71P1} that there are $C_2>0$ and $\delta_1>0$ such that $f'(w)<-C_2<0$ $(0\le r\le 1-\delta_1\e)$.
The function $\phi$ satisfies
\[
\e^2\left(\phi''+\frac{N-1}{r}\phi'\right)+f'(w)\phi-\kappa_{\e}\phi=0.
\]
Since $\delta_0>0$ is small, there is $C_3>0$ such that $f'(w)-\kappa_{\e}<f'(w)+\delta_0<-C_3<0$ ($0\le r\le 1-\delta_1\e$). It follows from Proposition~\ref{S73P1} that there are $C_4>0$ and $C_5>0$ such that $\textrm{dist}(r,\partial B)=1-r-\delta_1\e$ and 
\[
|\phi(r)|\le C_4 e^{-C_5(1-r-\e \delta_1)/\e}=C_4e^{\delta_1C_5}e^{-C_5(1-r)/\e}\ \ (0\le r\le 1-\delta_1\e).
\]
If we take $C_4$ large enough, then this inequality holds for $r\in [0,1]$, because $\left\| \phi\right\|_{\infty}=1<C_4\le C_4 e^{-C_5(1-r-\e \delta_1)/\e}$ ($1-\delta_1\e<r<1$).
The proof is complete.
\end{proof}

\subsection{Invertibility of $\calL$}
The main technical lemma in Section~7 is the invertibility of $\calL$.
\begin{lemma}\label{InvA}
The operator $\calL\in\calB(\bbX,\bbY)$ is invertible, and there is $C>0$ independent of small $\e>0$ such that $\|\calL^{-1}\|_{\calB(\bbY,\bbX)}\le C$.
\end{lemma}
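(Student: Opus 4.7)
The plan is to split the argument into two parts: (A) for each fixed sufficiently small $\e>0$, $\calL$ is Fredholm of index zero from $\bbX$ to $\bbY$, and (B) there is a uniform a priori bound
\[
\|v\|_{\bbX}\le C\,\|\calL v\|_{\bbY}\qquad (v\in\bbX)
\]
with $C>0$ independent of small $\e$. Together, (A) and (B) give injectivity (from (B)), surjectivity (Fredholm of index zero plus injectivity), and the required uniform bound on $\calL^{-1}$. Part (A) is standard: for each fixed $\e$ the operator $\calL$ is second-order uniformly elliptic with continuous coefficients and a Neumann boundary condition, so $L^q$-theory yields Fredholmness of index zero.

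For (B) I would argue by contradiction: assume there exist $\e_n\downarrow 0$ and $v_n\in\bbX$ (for $\e=\e_n$) with $\|v_n\|_{\bbX}=1$ and $g_n:=\calL_{\e_n}v_n$ satisfying $\|g_n\|_{\bbY}\to 0$. Introduce the boundary-layer variable $s=(1-r)/\e_n$ and $\tilde v_n(s):=v_n(1-\e_n s)$, so $\tilde v_n$ satisfies on $[0,1/\e_n]$
\[
\tilde v_n''\;-\;\frac{\e_n(N-1)}{1-\e_n s}\,\tilde v_n'\;+\;f'(\tilde w_n)\,\tilde v_n\;=\;g_n(1-\e_n s),\qquad \tilde v_n'(0)=0,
\]
with $\tilde w_n(s):=w(1-\e_n s)\to w^*(s)$ locally uniformly by Proposition~\ref{S71P1}.

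A priori decay comes from Proposition~\ref{S73P1} applied in the bulk: by (\ref{S71P1E0}) together with $w^*(s)\to 0$ as $s\to\infty$, there exist $\delta,c_0>0$ such that $f'(w(r))\le -c_0$ for $r\le 1-\delta\e_n$, so a barrier/comparison argument of the type carried out in Lemma~\ref{AE1} yields
\[
|v_n(r)|\le C\bigl(\|g_n\|_{\bbY}+\|v_n\|_\infty\,e^{-c_1(1-r)/\e_n}\bigr).
\]
Thus $\tilde v_n$ is uniformly bounded on $[0,\infty)$ and decays exponentially in $s$. Arzel\`a--Ascoli applied to the rescaled ODE (whose coefficients converge locally uniformly and whose right-hand side tends to $0$) extracts a subsequential limit $\tilde v_\infty\in H^1(\R_+)$ satisfying $L^*\tilde v_\infty=0$ with $\tilde v_\infty'(0)=0$. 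By the paragraph preceding Corollary~\ref{kappa_1}, $0$ is not an eigenvalue of $L^*$, hence $\tilde v_\infty\equiv 0$; combined with the exponential decay this forces $\|v_n\|_\infty\to 0$.

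To close the contradiction I would feed $\|v_n\|_\infty\to 0$ and $\|g_n\|_{\bbY}\to 0$ back into $\e_n^2\Delta v_n+f'(w)v_n=g_n$ and apply interior Calder\'on--Zygmund $L^q$-regularity on $\e$-scaled annular shells near $r=1$, supplemented by the bulk exponential estimate, to deduce $\|v_n\|_{\bbX}\to 0$, contradicting the normalization. \textbf{The main obstacle} is precisely this last bootstrap: the $W^{2,q}(B)$-norm is not scale-invariant under the boundary-layer rescaling and one must carefully track the $\e$-powers in the elliptic estimate for $\e^2\Delta$. The essential cancellation is supplied by the fact that $f'(w)$ is uniformly negative outside the layer (so that Proposition~\ref{S73P1} absorbs the $\e$-powers in the bulk), while inside the layer the rescaled limit problem is invertible because $0\notin\mathrm{spec}(L^*)$ by Corollary~\ref{kappa_1}.
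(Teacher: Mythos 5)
Your overall skeleton (boundary-layer rescaling, exponential decay in the bulk, Arzel\`a--Ascoli, and the fact that $0\notin\mathrm{spec}(L^*)$ from Corollary~\ref{kappa_1}) is the same as the paper's, and Part (A) (Fredholm of index zero for fixed $\e$) is unobjectionable. But your implementation of Part (B) through the \emph{inhomogeneous} a priori estimate has two genuine gaps. First, the decay estimate you invoke is not available in your setting: Proposition~\ref{S73P1} and Lemma~\ref{AE1} concern the homogeneous (eigenvalue) equation, whereas you need a pointwise bound for $\calL v_n=g_n$ with $g_n$ controlled only in $L^q$. A comparison/barrier argument requires a pointwise bound on $g_n$, and the local maximum principle at the natural scale $\e_n$ (rescale $x=x_0+\e_n y$, so that $\e_n^2\Delta$ becomes $\Delta$) converts $\|g_n\|_{L^q}$ into $\e_n^{-N/q}\|g_n\|_{L^q}$; hence the claimed inequality $|v_n(r)|\le C\bigl(\|g_n\|_{\bbY}+\|v_n\|_\infty e^{-c_1(1-r)/\e_n}\bigr)$ with $C$ independent of $\e_n$ is unjustified, and with it the conclusions $\|v_n\|_\infty\to0$ and $\tilde v_\infty\in H^1(\R_+)$.

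Second, the final bootstrap -- which you yourself flag as the main obstacle -- is precisely the step that does not go through: from $\e_n^2\Delta v_n=g_n-f'(w)v_n$ one only gets $\|\Delta v_n\|_{L^q}\le\e_n^{-2}\bigl(\|g_n\|_{L^q}+C\|v_n\|_\infty\bigr)$, and without quantitative rates the factor $\e_n^{-2}$ is not absorbed; radial data oscillating or concentrating at scale $\e_n$ show that neither the negativity of $f'(w)$ in the bulk nor the invertibility of $L^*$ supplies the cancellation you invoke in the unweighted $W^{2,q}$ norm, so $\|v_n\|_{\bbX}\to0$ cannot be deduced from $\|v_n\|_\infty\to0$ and $\|g_n\|_{\bbY}\to0$. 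The paper sidesteps both issues by reducing the lemma to the statement that $\calL$ has no eigenvalue in $[-\delta,\delta]$: the blow-up is performed on an exact eigenfunction normalized in $L^\infty$, so Lemma~\ref{AE1} applies verbatim, the limit is a nontrivial eigenpair of $L^*$ in $H^1(\R_+)$ (Fatou plus the decay), and Corollary~\ref{kappa_1} yields the contradiction; no inhomogeneous decay estimate and no passage back from $L^\infty$ to $W^{2,q}$ occur inside the argument (the conversion from the spectral gap to the $\calB(\bbY,\bbX)$ bound is exactly the part the paper leaves implicit). To salvage your route you would either have to state the a priori estimate in $\e$-weighted norms such as $\|v\|_{L^q}+\e^2\|D^2v\|_{L^q}$, or recast Part (B) spectrally as the paper does.
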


\begin{proof}
It is enough to show that there is a small $\delta>0$ such that $\calL$ has no eigenvalue in $[-\delta,\delta]$.
We show this by contradiction.

Suppose the contrary, i.e., there is a small $\delta>0$ such that $\calL$ has an eigenvalue $\kappa_{\e}$ in $[-\delta,\delta]$ for small $\e>0$.
Then there is an eigenfunction $\phi$ such that
\[
\begin{cases}
\e^2\Delta\phi+f'(w)\phi=\kappa_{\e}\phi & \textrm{in}\ \ B,\\
\frac{\partial}{\partial\nu}\phi=0 & \textrm{on}\ \ \partial B.
\end{cases}
\]
Without loss of generality we can assume $\left\|\phi\right\|_{\infty}=1$.
Because of Proposition~\ref{S71P1}, $f'(w)-\kappa_{\e}<-\delta_0<0$ $(0\le r\le 1-\delta_1\e)$.
By Lemma~\ref{AE1} we have
\[
|\phi(r)|\le C_0e^{-C_1(1-r)/\e}\ \ (0\le r\le 1).
\]
We let $\tilde{\phi}(s):=\phi(r)$, $\tilde{w}(s):=w(r)$, and $r=1-\e s$.
Moreover, we set $\bar{\phi}(s):=\tilde{\phi}(|s|)$ and $\bar{w}(s):=\tilde{w}(|s|)$.
Then $\bar{\phi}$ satisfies
\begin{equation}\label{InvAEq1}
\begin{cases}
\bar{\phi}''+a(s)\bar{\phi}'+f'(\bar{w}(s))\bar{\phi}=0, & -\frac{1}{\e}<s<\frac{1}{\e},\\
\bar{\phi}'(0)=\bar{\phi}'(\pm\frac{1}{\e})=0,\\
\left\|\bar{\phi}\right\|_{\infty}=1,
\end{cases}
\end{equation}
where
\[
a(s):=-\textrm{sign}(s)\frac{(N-1)\e^2}{1-\e|s|}.
\]
Let $R>0$.
If $\e>0$ is small, then $2R<1/\e$, $a(s)\in L^{\infty}(-2R,2R)$.
By \cite[Theorem~9.11]{GT83} we see that $\|\bar{\phi}\|_{H^{2}(-R,R)}\le C_2$.
Since the inclusion $H^{2}(-R,R)\hookrightarrow C^{1,\gamma}(-R,R)$ ($0<\gamma<1/2$) is continuous, $\|\bar{\phi}\|_{C^{1,\gamma}(-R,R)}\le C_3$.
By the Arzel\`a-Ascoli theorem we can choose a subsequence $\{\bar{\phi}\}_{\e>0}$ such that the following holds: 
There is $\bar{\phi}_R\in C^1(-R,R)$ such that $\bar{\phi}\xrightarrow[]{\e\downarrow 0}\bar{\phi}_R$ in $C^1(-R,R)$.
It follows from (\ref{InvAEq1}) that $\bar{\phi}''$ converges in $C^0(-R,R)$ as $\e\downarrow 0$.
Since the operator $\frac{d^2}{ds^2}$ that is defined on $\{ u\in C^2(-R,R);\ u'(0)=0\}$ is a closed operator in $C^1(-R,R)$, $\bar{\phi}_R\in C^2$ and $\bar{\phi}''\xrightarrow[]{\e\downarrow 0}\bar{\phi}''_R$ in $C^0(-R,R)$.
Let $\{ R_j\}_{j\ge 1}$ ($0<R_1<R_2<\cdots\rightarrow\infty$) be a sequence diverging to $\infty$.
For each fixed $R_j$, there is $\e>0$ such that $[0,R_j)\subset(-1/\e,1/\e)$.
Using the expanding domains $\{[0,R_j)\}_{j\ge 1}$ and a diagonal argument, we see that there is $\bar{\phi}^*\in C^2(\R)$ and $\kappa^*\in [-\delta,\delta]$ such that $\bar{\phi}\xrightarrow[]{\e\downarrow 0}\bar{\phi}^*$ in $C^2_{loc}(\R)$ and $\kappa_{\e}\rightarrow\kappa^*$.
We set $\phi^*(s):=\bar{\phi}^*(s)$ ($s\in\overline{\R}_+$).
Since, for each $\varphi\in C_0^1(\overline{\R}_+)$,
\[
\int_0^{1/\e}\left(\tilde{\phi}''-\frac{(N-1)\e^2}{1-\e s}\tilde{\phi}'+f'(\tilde{w})\tilde{\phi}-\kappa_{\e}\tilde{\phi}\right)\varphi ds=0,
\]
we have
\[
\int_{\overline{\R}_+}\left( (\phi^*)''+f'(w^*)\phi^*-\kappa^*\phi^*\right)\varphi dx=0,\ \ \left\|\phi^*\right\|_{\infty}=1,\ \ (\phi^*)'(0)=0.
\]
Since $\left\|\phi^*\right\|_{\infty}=1$, $\phi^*\not\equiv 0$.

We will show that $\phi^*\in H^1(\R_+)$. Multiplying $\calL\phi=\kappa_{\e}\phi$ by $r^{N-1}\phi$ and integrating it, we have
\[
\int_0^1\left(\e^2(\phi')^2-f'(w)\phi^2\right)r^{N-1}dr=-\int_0^1\kappa_{\e}\phi^2r^{N-1}dr.
\]
Making the change of variables $r=1-\e s$, we have
\[
\int_0^{1/\e}\left((\tilde{\phi}')^2+\tilde{\phi}^2\right)(1-\e s)^{N-1}ds=
\int_0^{1/\e}\left( p\tilde{w}^{p-1}-\kappa_{\e}\right)\tilde{\phi}^2(1-\e s)^{N-1}ds.
\]
By Lemma~\ref{AE1} we have $|\tilde{\phi}(s)|\le C_0e^{-C_1s}$. The right-hand side is bounded uniformly in $\e$, hence
\[
\left|\int_0^{1/\e}\left( p\tilde{w}^{p-1}-\kappa_{\e}\right)\tilde{\phi}^2(1-\e s)^{N-1}ds\right|\le C_2.
\]
Using Fatou's lemma, we have
\begin{multline*}
\int_{\R_+}\left\{ ((\phi^*)')^2+(\phi^*)^2\right\}ds
=\int_{\R_+}\liminf_{\e\downarrow 0}\chi_{[0,1/\e]}\left((\tilde{\phi}')^2+\tilde{\phi}^2\right)(1-\e s)^{N-1}ds\\
\le\liminf_{\e\downarrow 0}\int_0^{1/\e}\left((\tilde{\phi}')^2+\tilde{\phi}^2\right)(1-\e s)^{N-1}ds\le C_2,
\end{multline*}
where $\chi_{[0,1/\e]}$ is the characteristic function. Thus $\phi^*\in H^1(\R_+)$. Therefore $(\kappa^*,\phi^*)$ is an eigenpair of (\ref{S71P3Eq0}). Since (\ref{S71P3Eq0}) has no eigenvalue in $[-\delta,\delta]$ (Corollary~\ref{kappa_1}), we obtain a contradiction.
\end{proof}

\subsection{Contraction mapping}
Because of Lemma~\ref{InvA}, $\calL$ is invertible provided that $\e>0$ is small.
The equation (\ref{EE}) can be transformed to
\[
v=\calT (v),
\]
where
\[
\calT (v):=-\calL^{-1}[\calM w]-\calL^{-1}[\calN (v,w)].
\]
In order to show that $\calT$ is a contraction mapping on a small ball in $\bbX$ we show that $\left\|\calL^{-1}[\calM w]\right\|_{\bbX}$ is small.
\begin{lemma}\label{M}
$\left\| \calL^{-1}[\calM w]\right\|_{\bbX}=o(\e)$.
\end{lemma}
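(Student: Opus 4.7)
The plan is to reduce to an estimate on the source term: Lemma~\ref{InvA} gives $\|\calL^{-1}\|_{\calB(\bbY,\bbX)}\le C$ uniformly in small $\e$, so $\|\calL^{-1}[\calM w]\|_{\bbX}\le C\|\calM w\|_{\bbY}$, and the whole task is to show $\|\calM w\|_{\bbY}=o(\e)$, where $\calM w(r)=\e^2\frac{N-1}{r}w'(r)$.

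I factor out the expected size. Writing $\e^2 w'/r = \e\cdot(\e w')/r$, I get
\[
\|\calM w\|_{\bbY}^{q} = (N-1)^q\, c_N\, \e^q \int_0^1 |\e w'(r)|^q\, r^{N-1-q}\, dr,
\]
where $c_N$ is the surface measure of $S^{N-1}$. Because of the standing assumption $q<N$, the weight $r^{N-1-q}$ is integrable on $(0,1)$, which is exactly what will let dominated convergence get in.

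The key input is the first integral in Corollary~\ref{E_e}: $(\e w'(r))^2=F(w(r;\e))-F(w(0;\e))$ together with $F(w(0;\e))\to 0$ and the pointwise convergence $F(w(\,\cdot\,;\e))\to 0$ in $[0,1]$. These combine to give $\e w'(r)\to 0$ pointwise on $[0,1]$, while the uniform bound $|\e w'(r)|^2\le \max_{u\in[0,\bar\alpha]}F(u)<\infty$ follows from $w$ taking values in a fixed compact interval. Dominated convergence then forces $\int_0^1|\e w'|^q r^{N-1-q}\,dr\to 0$, so $\|\calM w\|_{\bbY}^{q}=o(\e^q)$, which is the desired $\|\calM w\|_{\bbY}=o(\e)$; invoking Lemma~\ref{InvA} closes the argument.

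The main obstacle to watch for is how to obtain the pointwise vanishing of $\e w'$ all the way up to $r=1$, where $w$ itself does not tend to $0$: at such $r$ one cannot derive the decay of $w'$ merely from the exponential decay of the limit profile $w^*$ away from its peak. The energy identity of Corollary~\ref{E_e} is crucial here because it converts the question into pointwise convergence of $F(w(r;\e))$, and $F$ vanishes both at $u=0$ (the limit away from the boundary) and at $u=\bar\alpha$ (the limit at $r=1$), so the same identity handles both regimes without needing finer asymptotics of $w'$ in the transition layer.
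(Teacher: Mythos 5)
Your proposal is correct and follows essentially the same route as the paper: reduce via the uniform bound on $\calL^{-1}$ from Lemma~\ref{InvA}, rewrite $(\e w')^2=F(w(r;\e))-F(w(0;\e))$ using Corollary~\ref{E_e}, and apply dominated convergence with the integrable weight $r^{N-1-q}$ (valid since $q<N$) to conclude $\|\calM w\|_{\bbY}=o(\e)$. The extra remark about $F$ vanishing at both $0$ and $\bar\alpha$ is a fair elaboration of why the pointwise convergence in Corollary~\ref{E_e} holds up to $r=1$, but the core argument is the paper's.
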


\begin{proof}
By Lemma~\ref{InvA} we have
\[
\left\|\calL^{-1}[\calM w]\right\|_{\bbX}\le\left\|\calL^{-1}\right\|_{\calB(\bbX,\bbY)}\left\| \calM w\right\|_{\bbY}\le C\left\|\calM w\right\|_{\bbY}.
\]
We show that $\left\|\calM w\right\|_{\bbY}=o(\e)$.
Using Corollary~\ref{E_e}, we have
\begin{align*}
\left\|\calM w\right\|_{\bbY}^q &=\left\|\e^2\frac{N-1}{r}w'\right\|_q^q\\
&=(N-1)^q\e^q\int_0^1(\e w')^qr^{N-q-1}dr\\
&=(N-1)^q\e^q\int_0^1\left( F(w(r))-F(w(0))\right)^{\frac{q}{2}}r^{N-q-1}dr.
\end{align*}
Since $|F(w(r))-F(w(0))|\le C$ $(0\le r\le 1)$ and $|F(w(r))-F(w(0))|\xrightarrow{\e\downarrow 0}0$ pointwisely in $[0,1]$, we see that $\left(F(w(r))-F(w(0))\right)^{q/2}r^{N-q-1}\xrightarrow{\e\downarrow 0}0$ pointwisely in $(0,1]$ and that 
\[
|\left(F(w(r))-F(w(0))\right)^{q/2}r^{N-q-1}|\le C^{q/2}r^{N-q-1}\in L^1(0,1).
\]
Here, we use the condition $q<N$.
The dominated convergence theorem says that
\[
\int_0^1\left( F(w(r))-F(w(0))\right)^{\frac{q}{2}}r^{N-q-1}dr\rightarrow 0\quad (\e\downarrow 0).
\]
Since $\left\|\calM w\right\|_{\bbY}^q=\e^q\cdot o(1)=o(\e^q)$, the proof is complete.
\end{proof}

Let $\bbB_{\e}:=\{ u\in\bbX;\ \left\|u\right\|_{\bbX}<\e\}$.
We show that the Lipschitz constant of $\calN (\,\cdot\,,w)$ in $\bbB_{\e}$ is small.
\begin{lemma}\label{NN}
Let $\e>0$ be small. If $v_1,v_2\in\bbB_{\e}$, then
\[
\left\|\calL^{-1}[\calN (v_1,w)]-\calL^{-1}[\calN (v_2,w)]\right\|_{\bbX}\le o(1)\left\| v_1-v_2\right\|_{\bbX}\quad (\e\downarrow 0).
\]
In particular, the Lipschitz constant of $\calL^{-1}[\calN (\,\cdot\,,w)]:\bbB_{\e}\longrightarrow\bbX$ is less than one.
\end{lemma}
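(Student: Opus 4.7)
The plan is to bound $\|\calN(v_1,w)-\calN(v_2,w)\|_{\bbY}$ by $o(1)\|v_1-v_2\|_{\bbX}$ as $\e\downarrow 0$ and then apply the uniform estimate $\|\calL^{-1}\|_{\calB(\bbY,\bbX)}\le C$ supplied by Lemma~\ref{InvA}. The key algebraic step is to rewrite the nonlinear remainder using the mean value theorem: since $\calN(v,w)=f(v+w)-f(w)-f'(w)v$, we have
\[
\calN(v_1,w)-\calN(v_2,w)=(v_1-v_2)\int_0^1\bigl\{f'(w+v_2+\tau(v_1-v_2))-f'(w)\bigr\}\,d\tau.
\]
Thus it suffices to control the bracketed integrand uniformly in $x\in B$ and $\tau\in[0,1]$.

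To do that I would first exploit that $q>N/2$, which makes the embedding $\bbX\hookrightarrow C^0(\overline{B})$ continuous. Let $C_{\text{emb}}$ denote the embedding constant, so that $v_i\in\bbB_\e$ implies $\|v_i\|_\infty\le C_{\text{emb}}\e$. Next, Proposition~\ref{S71P1} together with Corollary~\ref{E_e} shows $0\le w(r)\le\bar{\alpha}$ uniformly in small $\e$, so the arguments of $f'$ above all lie in the fixed compact interval $[-\bar{\alpha}-1,\bar{\alpha}+1]$ once $\e$ is small. Because $p>1$, the function $f'(U)=-1+p|U|^{p-1}$ is continuous on $\R$ and hence uniformly continuous on that compact interval; denote its modulus of continuity there by $\omega$. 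Then for small $\e$, uniformly in $x$ and $\tau$,
\[
\bigl|f'(w+v_2+\tau(v_1-v_2))-f'(w)\bigr|\le \omega\bigl(\|v_2+\tau(v_1-v_2)\|_\infty\bigr)\le\omega(2C_{\text{emb}}\e).
\]

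Inserting this into the mean value representation yields the pointwise bound $|\calN(v_1,w)-\calN(v_2,w)|\le\omega(2C_{\text{emb}}\e)|v_1-v_2|$, from which
\[
\|\calN(v_1,w)-\calN(v_2,w)\|_{\bbY}\le\omega(2C_{\text{emb}}\e)\|v_1-v_2\|_{\bbY}\le\omega(2C_{\text{emb}}\e)\|v_1-v_2\|_{\bbX}
\]
since $\bbX\hookrightarrow \bbY$ continuously. Applying Lemma~\ref{InvA} gives $\|\calL^{-1}[\calN(v_1,w)-\calN(v_2,w)]\|_{\bbX}\le C\,\omega(2C_{\text{emb}}\e)\|v_1-v_2\|_{\bbX}$, and since $\omega(2C_{\text{emb}}\e)\to 0$ as $\e\downarrow 0$ this is the desired $o(1)$ estimate; for $\e$ sufficiently small the Lipschitz constant of $\calL^{-1}[\calN(\cdot,w)]$ on $\bbB_\e$ is therefore strictly less than one. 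The only mildly delicate point is continuity of $f'$ at $U=0$ when $1<p<2$, but this is fine because $|U|^{p-1}\to 0$ as $U\to 0$; nothing else requires more than the routine argument sketched above.
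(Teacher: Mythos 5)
Your proposal is correct and follows essentially the same route as the paper: both reduce the claim to the estimate $\left\|\calN(v_1,w)-\calN(v_2,w)\right\|_{\bbY}\le o(1)\left\|v_1-v_2\right\|_{\bbX}$ via the mean value theorem (the paper uses the Lagrange form with an intermediate function $\tilde{w}$, you use the integral form), the uniform smallness $\|v_i\|_\infty\le C\e$ coming from $\bbX\hookrightarrow C^0$, the uniform continuity of $f'$ on a fixed compact interval, and then the uniform bound on $\|\calL^{-1}\|_{\calB(\bbY,\bbX)}$ from Lemma~\ref{InvA}. Your explicit treatment of the modulus of continuity and of the case $1<p<2$ just makes precise what the paper leaves implicit in the step $\left\|f'(\tilde{w})-f'(w)\right\|_{\infty}=o(1)$.
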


\begin{proof}
By Lemma~\ref{InvA} we have
\begin{align*}
\left\| \calL^{-1}[\calN (v_1,w)]-\calL^{-1}[\calN(v_2,w)]\right\|_{\bbX}&\le\left\| \calL^{-1}\right\|_{\calB(\bbX,\bbY)}\left\|\calN (v_1,w)-\calN(v_2,w)\right\|_{\bbY}\\
&\le C\left\| \calN(v_1,w)-\calN (v_2,w)\right\|_{\bbY}.
\end{align*}
We show that $\left\|\calN (v_1,w)-\calN (v_2,w)\right\|_{\bbY}\le o(\e)\left\| v_1-v_2\right\|_{\bbX}$ for $v_1,v_2\in\bbB_{\e}$.
We have
\begin{align*}
\left\|\calN (v_1,w)-\calN (v_2,w)\right\|_q & =\left\| f(v_1+w)-f(v_2+w)-f'(w)(v_1-v_2)\right\|_q\\
&=\left\| f'(\tilde{w})(v_1-v_2)-f'(w)(v_1-v_2)\right\|_q\\
&\le\left\| f'(\tilde{w})-f'(w)\right\|_{\infty}\left\| v_1-v_2\right\|_q.
\end{align*}
Here, $\tilde{w}(x)$ is a function such that
\[
\min\{ v_1(x)+w(x),v_2(x)+w(x)\}\le\tilde{w}(x)\le\max\{ v_1(x)+w(x),v_2(x)+w(x)\},
\]
hence $\left\| w(x)-\tilde{w}(x)\right\|_{\infty}\le C\e$.
We have $\left\| f'(\tilde{w})-f'(w)\right\|_{\infty}=o(1)$ $(\e\downarrow 0)$.
Thus,
\begin{align*}
\left\|\calN (v_1,w)-\calN (v_2,w)\right\|_q & \le o(1)\left\| v_1-v_2\right\|_q\\
& \le o(1) C\left\| v_1-v_2\right\|_{\bbX}.
\end{align*}
The proof is complete.
\end{proof}

Using Lemma~\ref{NN}, we will show that $\calT$ is a contraction mapping in $\bbB_{\e}$.
\begin{lemma}\label{T}
Let $\e>0$ be small.
Then $T(\bbB_{\e})\subset\bbB_{\e}$.
Moreover, the Lipschitz constant of $\calT :\bbB_{\e}\longrightarrow\bbB_{\e}$ is less than one, i.e., there is $\delta\in (0,1)$ such that if $v_1,v_2\in\bbB_{\e}$, then $\left\|\calT (v_1)-\calT (v_2)\right\|_{\bbX}\le\delta\left\|v_1-v_2\right\|_{\bbX}$.
\end{lemma}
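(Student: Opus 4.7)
The proof is a direct combination of Lemmas~\ref{M} and~\ref{NN}. The plan is to first verify $\calT(\bbB_{\e})\subset\bbB_{\e}$ and then verify the Lipschitz estimate; both will follow almost mechanically once we observe that $\calN(0,w)=f(w)-f(w)-f'(w)\cdot 0=0$.

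For the invariance, let $v\in\bbB_{\e}$. Writing
\[
\calT(v)=-\calL^{-1}[\calM w]-\calL^{-1}[\calN(v,w)],
\]
the triangle inequality gives
\[
\|\calT(v)\|_{\bbX}\le\|\calL^{-1}[\calM w]\|_{\bbX}+\|\calL^{-1}[\calN(v,w)]-\calL^{-1}[\calN(0,w)]\|_{\bbX},
\]
since $\calN(0,w)=0$. Lemma~\ref{M} handles the first term with $o(\e)$, and Lemma~\ref{NN} applied to the pair $(v,0)\in\bbB_{\e}\times\bbB_{\e}$ bounds the second by $o(1)\|v\|_{\bbX}\le o(1)\cdot\e=o(\e)$. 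Thus $\|\calT(v)\|_{\bbX}=o(\e)$, which is strictly less than $\e$ once $\e$ is small, giving $\calT(v)\in\bbB_{\e}$.

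For the contraction property, let $v_1,v_2\in\bbB_{\e}$. Since the affine term $-\calL^{-1}[\calM w]$ drops out of the difference, we have
\[
\calT(v_1)-\calT(v_2)=-\bigl(\calL^{-1}[\calN(v_1,w)]-\calL^{-1}[\calN(v_2,w)]\bigr),
\]
and Lemma~\ref{NN} gives $\|\calT(v_1)-\calT(v_2)\|_{\bbX}\le o(1)\|v_1-v_2\|_{\bbX}$ as $\e\downarrow 0$. Choosing $\e>0$ small enough that this $o(1)$ factor is below some $\delta\in(0,1)$ yields the claim.

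There is no real obstacle here: both inputs (the smallness of $\calL^{-1}[\calM w]$ and the smallness of the Lipschitz constant of $\calL^{-1}[\calN(\,\cdot\,,w)]$) were established in the preceding two lemmas, and the only minor observation needed is $\calN(0,w)=0$, which turns the Lipschitz estimate into a size estimate. The contraction mapping theorem then applies in the next subsection to produce the desired solution.
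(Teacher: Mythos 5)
Your proposal is correct and follows essentially the same route as the paper's proof: both establish invariance of $\bbB_{\e}$ by combining Lemma~\ref{M} with Lemma~\ref{NN} applied to the pair $(v,0)$ (using $\calN(0,w)=0$), and both obtain the contraction estimate directly from Lemma~\ref{NN} after the affine term cancels. No gaps.
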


\begin{proof}
First, we will show that $T(\bbB_{\e})\subset\bbB_{\e}$.
By Lemma~\ref{NN} we have that, for $v\in\bbB_{\e}$,
\begin{align}
\left\|\calL^{-1}[\calN( v,w)]\right\|_{\bbX}&=\left\| \calL^{-1}[\calN (v,w)]-\calL^{-1}[\calN (0,w)]\right\|_{\bbX}\nonumber\\
&\le o(1)\left\| v-0\right\|_{\bbX},\label{TEq0}
\end{align}
where we use Lemma~\ref{NN}.
We have that, for $v\in\bbB_{\e}$,
\begin{align*}
\left\| \calT (v)\right\|_{\bbX} &=\left\|\calL^{-1}[\calM w]+\calL^{-1}[\calN (v,w)]\right\|_{\bbX}\\
&\le\left\|\calL^{-1}[\calM w]\right\|_{\bbX}+\left\|\calL^{-1}[\calN (v,w)]\right\|_{\bbX}\\
&\le o(\e)+o(1)\left\| v\right\|_{\bbX}=o(\e),
\end{align*}
where we use Lemma~\ref{M} and (\ref{TEq0}).
Thus, if $\e>0$ is small, then, for all $v\in\bbB_{\e}$, $\calT (v)\in\bbB_{\e}$.

Second, we show that the Lipschitz constant of $\calT(\,\cdot\,)$ in $\bbB_{\e}$ is $o(1)$.
We have that, for $v_1,v_2\in\bbB_{\e}$,
\begin{align*}
\left\|\calT (v_1)-\calT (v_2)\right\|_{\bbX}
&=\left\|\calL^{-1}[\calN (v_1,w)]-\calL^{-1}[\calN (v_2,w)]\right\|_{\bbX}\\
&\le o(1)\left\| v_1-v_2\right\|_{\bbX},
\end{align*}
where we use Lemma~\ref{NN}.
Then the later part of the lemma holds.
\end{proof}

Applying the contraction mapping theorem to $\calT$ which is defined in $\bbB_{\e}$, we obtain the following:
\begin{corollary}\label{Branch}
There is a large $\lambda_0>0$ such that (\ref{R}) has a one-parameter family of positive solutions $\{(\lambda,U(r,\lambda))\}_{\lambda>\lambda_0}$ and $\left\| U-w\right\|_{\infty}<C\e(=C/\sqrt{\lambda})$.
\end{corollary}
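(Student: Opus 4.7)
The plan is to read Corollary~\ref{Branch} as a direct application of Banach's fixed point theorem to the operator $\calT$ constructed above, with the preparatory work already done by Lemmas~\ref{M}, \ref{NN}, and \ref{T}. Concretely, I would fix $\e>0$ small (equivalently $\lambda=1/\e^2$ large) and note that Lemma~\ref{T} furnishes precisely the two hypotheses of the contraction mapping theorem on the complete metric space $\bbB_{\e}\subset\bbX$: the self-map property $\calT(\bbB_{\e})\subset\bbB_{\e}$ and a Lipschitz constant strictly less than $1$. Thus there exists a unique $v_{\e}\in\bbB_{\e}$ with $v_{\e}=\calT(v_{\e})$, i.e., $\calL v_{\e}+\calM w+\calN(v_{\e},w)=0$, which by construction (\ref{EE}) is equivalent to saying $U(r,\lambda):=w(r)+v_{\e}(r)$ is a solution of the Neumann problem (\ref{R}) in $\bbX$.

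Next I would extract the two conclusions about $U$. Since $q>N/2$, the continuous embedding $\bbX\hookrightarrow C^0(\overline{B})$ yields $\|v_{\e}\|_{\infty}\le C_0\|v_{\e}\|_{\bbX}<C_0\e$, which is the asserted estimate $\|U-w\|_{\infty}<C\e=C/\sqrt{\lambda}$. For positivity, I would use Proposition~\ref{S71P1}: the increasing solution $w(x;\e)$ satisfies $\beta\le w\le\alpha$ with $\beta>0$ independent of $\e$ (alternatively, observe via (\ref{S71P1E0}) and $w^*>0$ that $w$ is uniformly bounded below by a positive constant on $[0,1]$). Hence for $\e$ small enough that $C_0\e<\beta/2$, one has $U=w+v_{\e}\ge\beta/2>0$ on $\overline{B}$, and since we defined $f(U)=-U+U|U|^{p-1}$ in this section only to obtain a Nemytskii map defined on all of $\R$, the positive solution $U$ actually solves the original equation (\ref{R}) with $f(U)=-U+U^p$.

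To get the one-parameter family $\{(\lambda,U(r,\lambda))\}_{\lambda>\lambda_0}$, I would simply run the contraction argument at each $\lambda>\lambda_0:=1/\e_0^2$ (with $\e_0$ the common smallness threshold obtained from Lemmas~\ref{M}, \ref{NN}, \ref{T}), producing a well-defined map $\lambda\mapsto U(\,\cdot\,,\lambda)$. Continuity in $\lambda$ is automatic from the standard parametric version of the contraction principle: $\calL$, $\calM w$, and $\calN(\,\cdot\,,w)$ all depend continuously on $\e$, so the unique fixed point $v_{\e}$ does as well; the regularity in $\lambda$ could be upgraded to $C^1$ via the implicit function theorem applied to $F(v,\e):=v-\calT_{\e}(v)$, but continuity is all that is needed for the statement.

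The main (very mild) obstacle is essentially bookkeeping: verifying that all of $\calL$, $\calM$, $\calN$ and the function space $\bbX$ are genuinely $\e$-parametrized families whose hypotheses from Lemmas~\ref{M}--\ref{T} hold uniformly for $\e$ in some interval $(0,\e_0)$, so that a single $\lambda_0$ works for the entire branch rather than just for each individual $\lambda$. Since each of the three lemmas is proved with $o(\e)$- or $o(1)$-type estimates as $\e\downarrow 0$, this uniformity is in fact built in, so no additional work is required beyond citing them.
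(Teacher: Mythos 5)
Your existence step and the bound $\left\|U-w\right\|_{\infty}<C\e$ are exactly the paper's route: Lemma~\ref{T} plus the contraction mapping theorem on $\bbB_{\e}$, followed by the embedding $\bbX\hookrightarrow C^0(\overline{B})$. The genuine gap is in your positivity argument. You assert that the increasing solution $w(\,\cdot\,;\e)$ of Proposition~\ref{S71P1} satisfies $w\ge\beta$ with $\beta>0$ independent of $\e$. That is false: $\beta=w(0;\e)$, and as $\e\downarrow 0$ the solution develops a boundary layer at $x=1$, so its minimum tends to $0$ (in fact exponentially fast in $1/\e$). Indeed Corollary~\ref{E_e} gives $F(w(0;\e))\rightarrow 0$ with $0<w(0;\e)<1$, which forces $w(0;\e)\rightarrow 0$; and your alternative justification via (\ref{S71P1E0}) fails for the same reason, because the comparison with $w^*$ holds on $[0,d_{\e}]$ with $d_{\e}=1/\e\rightarrow\infty$ and $w^*(s)\rightarrow 0$ as $s\rightarrow\infty$, so it yields no $\e$-independent positive lower bound. (Consistently, Corollary~\ref{S77T1} shows $\gamma=U(0)\rightarrow 0$ as $\lambda\rightarrow\infty$.) Since $w(0;\e)$ is far smaller than the error bound $C_0\e$ coming from $\|v_{\e}\|_{\bbX}<\e$, the inequality $U\ge w(0;\e)-C_0\e$ can perfectly well have a negative right-hand side, and your conclusion $U\ge\beta/2>0$ does not follow.

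This is precisely why the paper's proof begins by warning that the estimate $\left\|U-w\right\|_{\infty}<C\e$ does not by itself guarantee positivity, and argues instead at a negative minimum: if $U(r_0)<0$ for some $r_0$, take $r_1$ with $U(r_1)=\min U<0$; there $U'(r_1)=0$ (at the endpoints by the Neumann condition) and $U''(r_1)\ge 0$, so the equation gives $0\le\e^2U''(r_1)=-f(U(r_1))$, and with $f(U)=-U+U|U|^{p-1}$ this forces $U(r_1)\le -1$, contradicting $U\ge w-\left\|U-w\right\|_{\infty}>-C\e>-1$ for small $\e$. Hence $U\ge 0$, and the strong maximum principle upgrades this to $U>0$. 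Replacing your lower-bound-on-$w$ step by this minimum-point argument (or any argument using that $f(t)>0$ for $-1<t<0$) repairs the proof; your remaining points, namely running the fixed point argument for each $\lambda>\lambda_0$, the uniformity of the smallness thresholds, and continuity in $\lambda$, are fine and are not where the difficulty lies.
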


\begin{proof}
Since the condition $\| U-w\|_{\infty}<C\e$ does not guarantee the positivity of $U$, we have to check the positivity of $U$.
We see by a priori estimate that $U$ is a classical solution.
If there is $r_0\in [0,1]$ such that $U(r_0)<0$, then there is $r_1\in [0,1]$ such that $\min_{0\le r<1}U(r)=U(r_1)<0$.
Then, $0\le \e^2U''(r_1)=-f(U(r_1))$, hence $U(r_1)\le -1$.
We obtain a contradiction, because $\|U-w\|_{\infty}<C\e$.
Therefore $U\ge 0$.
The equality does not hold, because of the strong maximum principle.
Thus, $U>0$.
\end{proof}

\subsection{Nondegeneracy}
\begin{lemma}\label{ND}
Let $\{(\lambda,U)\}$ be a family of solutions obtained in Corollary~\ref{Branch}, and let $L:=\e^2\left(\frac{d^2}{dr^2}+\frac{N-1}{r}\frac{d}{dr}\right)+f'(U)$.
Then there is small $\delta>0$ such that $L$ has no eigenvalue in $[-\delta,\delta]$ if $\e>0$ is small.
In particular, $U$ is nondegenerate in $\bbX$.
\end{lemma}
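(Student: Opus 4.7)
The plan is to run the same contradiction argument as in the proof of Lemma~\ref{InvA}, treating $L$ as an $o(1)$ perturbation of $\calL$. The new ingredient supplied by Corollary~\ref{Branch} is $\|U-w\|_\infty \le C\e$; combined with the $C^1$-smoothness of $f'$ and the uniform boundedness of $U$ and $w$, this gives $\|f'(U)-f'(w)\|_\infty = o(1)$ as $\e\downarrow 0$.

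Suppose for contradiction that for every small $\delta>0$ there is an eigenpair $(\kappa_\e,\phi)\in[-\delta,\delta]\times\bbX$ of $L$ with $\|\phi\|_\infty=1$. Rewrite $L\phi=\kappa_\e\phi$ as
\[
\e^2\!\left(\phi''+\tfrac{N-1}{r}\phi'\right) + f'(w)\phi = \bigl(\kappa_\e + f'(w)-f'(U)\bigr)\phi \quad\text{in } B,
\]
whose right-hand side is $o(1)\phi$. For $\e$ small enough that $\|f'(U)-f'(w)\|_\infty + \delta < C_2/2$, the bound $f'(w)<-C_2$ on $[0,1-\delta_1\e]$ from Proposition~\ref{S71P1} still gives $f'(w)+f'(U)-f'(w)-\kappa_\e < -C_2/2 < 0$ there. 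Hence Proposition~\ref{S73P1} applies verbatim as in Lemma~\ref{AE1} and yields $|\phi(r)|\le C_0 e^{-C_1(1-r)/\e}$ with $C_0,C_1$ independent of $\e$.

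Next I would follow the rescaling portion of Lemma~\ref{InvA}: set $\tilde\phi(s):=\phi(r)$, $\tilde U(s):=U(r)$ with $r=1-\e s$, reflect to $\bar\phi,\bar U$ on $(-1/\e,1/\e)$, and extract via $W^{2,q}$-estimates, Arzel\`a--Ascoli and a diagonal argument a subsequential limit $\phi^*\in C^2(\overline{\R}_+)$ with $\kappa_\e\to\kappa^*\in[-\delta,\delta]$. Because $\|U-w\|_\infty\to 0$ and $\tilde w\to w^*$ locally uniformly by Proposition~\ref{S71P1}, the coefficient $f'(\tilde U)$ converges locally uniformly to $f'(w^*)$, so in the limit $\phi^*$ solves $L^*\phi^* = \kappa^*\phi^*$ on $\R_+$ with $(\phi^*)'(0)=0$ and $\|\phi^*\|_\infty=1$. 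The exponential decay bound feeds into the same Fatou computation as in Lemma~\ref{InvA} to give $\phi^*\in H^1(\R_+)$, so $(\kappa^*,\phi^*)$ is a genuine eigenpair of (\ref{S71P3Eq0}). Choosing $\delta<\min\{\kappa_0^*,1\}/2$ contradicts Corollary~\ref{kappa_1}, completing the spectral gap statement.

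Nondegeneracy in $\bbX$ is then immediate: once $L$ has no eigenvalue in $[-\delta,\delta]$ and $0$ lies in this interval, $\ker L|_{\bbX}=\{0\}$. The only delicate point I foresee is ensuring that the small perturbation $f'(U)-f'(w)$ neither destroys the negativity needed for the Agmon-type decay nor shifts the limiting equation away from (\ref{S71P3Eq0}); both concerns are handled simply by choosing $\e$ small enough that the $o(1)$ quantities fall below fixed thresholds coming from $C_2$ and $\kappa_0^*$.
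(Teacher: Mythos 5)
Your argument is correct, but it takes a different route from the paper. You re-run the entire blow-up machinery of Lemma~\ref{InvA} with $U$ in place of $w$: exponential decay of the eigenfunction via Proposition~\ref{S73P1} (which does go through, since $f'(U)-\kappa_\e<f'(w)+o(1)+\delta<-C_2/2$ on $[0,1-\delta_1\e]$), rescaling near the boundary, Arzel\`a--Ascoli plus a diagonal argument, the Fatou computation giving $\phi^*\in H^1(\R_+)$, and finally a contradiction with Corollary~\ref{kappa_1}; the only genuinely new input is $\|f'(U)-f'(w)\|_\infty=o(1)$, which you correctly extract from Corollary~\ref{Branch}. The paper instead uses exactly that $o(1)$ estimate in a much shorter perturbation argument: if $L\phi=\kappa_\lambda\phi$ with $\kappa_\lambda\in[-\delta/2,\delta/2]$ and $\|\phi\|_{\bbX}=1$, rewrite this as $(\calL-\kappa_\lambda)\phi=-\bigl(f'(U)-f'(w)\bigr)\phi$ and invert, using the uniform bound $\|(\calL-\kappa_\lambda)^{-1}\|\le C$ furnished by Lemma~\ref{InvA}, to get $\|\phi\|_{\bbX}\le C\|f'(U)-f'(w)\|_\infty\|\phi\|_{\bbY}\to 0$, contradicting the normalization. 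What each approach buys: the paper's proof is a few lines and reuses Lemma~\ref{InvA} as a black box, at the (small) cost of needing the uniform resolvent bound for the shifted operator $\calL-\kappa_\lambda$; yours avoids that resolvent bound entirely but duplicates the technical work of the decay estimate and the passage to the limit problem $L^*$ -- in effect you prove Lemma~\ref{InvA} again for the perturbed potential rather than perturbing off its conclusion. Do make sure, as in Lemma~\ref{InvA}, that the final $\delta$ is chosen no larger than the one in Corollary~\ref{kappa_1} (and below $\kappa_0^*$), and that the uniform exponential decay is what guarantees $\|\phi^*\|_\infty=1$ survives the local limit; with those points stated, your proof is complete.
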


\begin{proof}
Let $\delta>0$ be a constant used in the proof of Lemma~\ref{InvA}.
Then, $\calL$ has no eigenvalue in $[-\delta,\delta]$.
We show that $L$ has no eigenvalue in $[-\delta/2,\delta/2]$.
Suppose the contrary, i.e., there is $\kappa_{\lambda}\in [-\delta/2,\delta/2]$ such that $L\phi=\kappa_{\lambda}\phi$ $(\|\phi\|_{\bbX}=1)$.
Then,
\begin{equation}\label{NDEq0}
(\calL-\kappa_{\lambda})\phi+(f'(U)-f'(w))\phi=0.
\end{equation}
Because of Lemma~\ref{InvA}, $(\calL-\kappa_{\lambda})$ is invertible and there is $C>0$ independent of small $\e>0$ such that $\left\| (\calL-\kappa_{\lambda})^{-1}\right\|_{\calB (\bbX,\bbY)}\le C$.
It follows from Corollary~\ref{Branch} that $\left\| f'(U)-f'(w)\right\|_{\infty}\rightarrow 0$ as $\e\downarrow 0$.
By (\ref{NDEq0}) we have
\begin{align*}
\left\|\phi\right\|_{\bbX} &=\left\| (\calL -\kappa_{\lambda})^{-1}\left[\left( f'(U)-f'(w)\right)\phi\right]\right\|_{\bbX}\\
&\le\left\| (\calL-\kappa_{\lambda})^{-1}\right\|_{\calB(\bbX,\bbY)}\left\| \left( f'(U)-f'(w)\right)\phi\right\|_{\bbY}\\
&\le C\left\| f'(U)-f'(w)\right\|_{\infty}\left\|\phi\right\|_{\bbY}\rightarrow 0\quad (\e\downarrow 0),
\end{align*}
which contradicts that $\|\phi\|_{\bbX}=1$.
\end{proof}

\subsection{Asymptotic behavior of $\lambda_1(\gamma)$ $(\gamma\downarrow 0)$}
\begin{theorem}\label{S77C1}
Let $(\lambda_1(\gamma),U_1(r,\gamma))\in\calC_1$.
Then, $U_1(r,\gamma)$ is a boundary concentrating solution obtained in Corollary~\ref{Branch} if $\gamma>0$ is small.
\end{theorem}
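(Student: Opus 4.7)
The plan is threefold: first, to show that the boundary-concentrating family $\{U(\,\cdot\,, \lambda)\}_{\lambda > \lambda_0}$ of Corollary~\ref{Branch} satisfies $U(0, \lambda) \to 0$ as $\lambda \to \infty$, so that for every small $\gamma > 0$ there is at least one concentrating solution $\hat U$ with $\hat U(0) = \gamma$; second, to establish $\calZ_{[0,1]}[\hat U - 1] = 1$, placing $\hat U$ on $\calC_1^-$; third, to use the nondegeneracy from Lemma~\ref{ND} and the local parametrization in Proposition~\ref{LPR} to identify $\hat U$ with $U_1(\,\cdot\,, \gamma)$.

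For the first step, Corollary~\ref{Branch} gives $\|U - w\|_\infty < C\e$, so $U(0, \lambda) = w(0) + O(\e)$. Writing $w(0) = \tilde w(d_\e; \e)$ with $d_\e = 1/\e \to \infty$ and invoking (\ref{S71P1E0}), we have $|\tilde w(d_\e; \e) - w^*(d_\e)| \to 0$; since the explicit formula for $w^*$ yields $w^*(s) \to 0$ as $s \to \infty$, we conclude $U(0, \lambda) \to 0$. Continuity of $\lambda \mapsto U(0, \lambda)$ then forces its image to cover an interval $(0, \gamma_0)$, giving the desired $\hat U$ for every sufficiently small $\gamma$.

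For the zero-count step, the phase-portrait description preceding Proposition~\ref{S71P1} shows that $w$ is strictly monotone on $[0,1]$ with $w(0) = \beta \downarrow 0$ and $w(1) = \alpha \uparrow ((p+1)/2)^{1/(p-1)} > 1$ as $\e \downarrow 0$ (the orbit approaches the homoclinic). Hence $w - 1$ has a unique simple zero at some $r_0 \in (0, 1)$. Using the first integral $(\e w')^2 = F(w) - F(w(0;\e))$ from Corollary~\ref{E_e} at $r = r_0$, together with $F(w(0;\e)) \to 0$, we obtain $(\e w'(r_0))^2 \to F(1) = (p-1)/(p+1) > 0$, so $|w'(r_0)| \ge c/\e$. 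Because $\|\hat U - w\|_\infty < C\e$, this quantitative transversality forces $\hat U - 1$ to have exactly one simple zero, lying near $r_0$; hence $\calZ_{[0,1]}[\hat U - 1] = 1$, placing $\hat U$ on $\calC_1$, and $\hat U(0) = \gamma < 1$ places it on $\calC_1^-$.

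Finally, Lemma~\ref{ND} asserts that $\hat U$ is nondegenerate, so Proposition~\ref{LPR} yields a unique local $C^1$ curve of solutions through $(\hat\lambda, \hat U)$ parametrized by the value at $r = 0$. Since $\calC_1^-$ is itself such a curve, the two coincide locally, giving $\hat U = U_1(\,\cdot\,, \gamma)$ and $\hat\lambda = \lambda_1(\gamma)$; as a by-product, $\lambda_1(\gamma) \to \infty$ as $\gamma \downarrow 0$. The principal obstacle is the zero-counting step: converting $C^0$-smallness of $\hat U - w$ into a global statement on zeros of $\hat U - 1$ relies on the quantitative lower bound $|w'(r_0)| \gtrsim 1/\e$, which in turn rests on the first integral of Corollary~\ref{E_e} and the $C^0$-convergence $\tilde w \to w^*$.
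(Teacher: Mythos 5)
Your overall strategy (show the concentrating solutions of Corollary~\ref{Branch} intersect the constant $1$ exactly once, then identify them with the small-$\gamma$ part of $\calC_1$) is the same as the paper's, but there is a genuine gap at the zero-counting step. From Corollary~\ref{Branch} you only have the $C^0$ bound $\|\hat U-w\|_\infty<C\e$. Together with the steepness $|w'|\ge c/\e$ on the region where $w\in[1-C\e,1+C\e]$, this localizes all zeros of $\hat U-1$ to a window of length $O(\e^2)$ around $r_0$ and gives \emph{at least} one zero by a sign change, but it does not force the zero to be unique: a perturbation that is merely small in sup-norm may cross the level $1$ several times inside that window, and nothing in your argument controls $\hat U'$ there (simplicity of each individual zero, which holds because $U\equiv1$ solves the same ODE, bounds the order of zeros, not their number). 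The paper closes exactly this point with derivative information: writing $U=v+w$, it shows $\|\calM w\|_\infty\to0$ and then, by elliptic regularity applied to (\ref{EE}), $\|v\|_{C^1}\to0$, so at any $x_0$ with $U(x_0)=1$ one has $U'(x_0)=v'(x_0)+w'(x_0)>0$; a continuous function that crosses the level $1$ only transversally upward crosses it once. Alternatively your $C^0$-only setup could be rescued by a Sturm separation argument: on the crossing window $\psi:=\hat U-1$ satisfies $\e^2\bigl(\psi''+\frac{N-1}{r}\psi'\bigr)+c(r)\psi=0$ with $c(r)$ close to $p-1$, so consecutive zeros are at least a fixed multiple of $\e$ apart, which exceeds the $O(\e^2)$ width of the window; but some such extra ingredient is needed and is not in your write-up.

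A second, lesser issue is the identification step. The assertion that $\calZ_{[0,1]}[\hat U-1]=1$ ``places $\hat U$ on $\calC_1$'' is precisely the classification (\ref{Th3E0}) (every nonconstant solution with intersection number $n$ lies on $\calC_n$), which the paper also invokes here and proves only in Section~8 via the continuation results (Lemmas~\ref{BDDB}, \ref{GPR}, \ref{GPR2}). You may cite it as the paper does, but then your appeal to Lemma~\ref{ND} and Proposition~\ref{LPR} is redundant; and as a \emph{substitute} for (\ref{Th3E0}) it does not work, because Proposition~\ref{LPR} only gives the unique local solution curve through a neighborhood of $(\hat\lambda,\hat U)$, and the statement ``since $\calC_1^-$ is itself such a curve, the two coincide locally'' has no force unless you already know the two curves share a point. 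Once (\ref{Th3E0}) is granted, the identification with $(\lambda_1(\gamma),U_1(\cdot,\gamma))$ is immediate from the global parametrization of $\calC_1$ by $\gamma$, as in the paper; your first step (that $U(0,\lambda)\to0$, so all small $\gamma$ are attained) is correct and is the natural complement to that argument.
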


\begin{proof}
Since $|w(r)-w(r)^p|\le C$ $(0\le r\le 1)$ and $w(r)\xrightarrow{\e\downarrow 0}0$ pointwisely in $[0,1)$, by the dominated convergence theorem we see that if $r>1/2$, then
\begin{align*}
\e^2\frac{|w'(r)|}{r}&\le\frac{1}{r}\int_0^r\left(|w(s)|+|w(s)|^p\right)ds\\
&\le 2\int_0^1\left(|w(s)|+|w(s)|^p\right)ds\rightarrow 0\quad (\e\downarrow 0).
\end{align*}
Hence, $\left\|\calM w\right\|_{\infty}\rightarrow 0$ $(\e\downarrow 0)$.
Since $v$ satisfies (\ref{EE}), by the elliptic regularity we see that $\left\| v\right\|_{C^1}\rightarrow 0$ $(\e\downarrow 0)$.
If $v(x_0)+w(x_0)=1$, then $x_0>1/2$ and by phase plane argument we see that $w'(x_0)(>0)$ is large. Since $v'(x_0)$ is small, $v'(x_0)+w'(x_0)>0$.
Therefore, $\calZ_{[0,1]}[v(\,\cdot\,)+w(\,\cdot\,)-1]=1$ provided that $\e>0$ is small.
The boundary concentrating solution obtained by Corollary~\ref{Branch} belongs to $\calC_1$, because of (\ref{Th3E0}).
The proof of (\ref{Th3E0}) is postponed until the proof of Theorem~\ref{Th2} in Section~8.
\end{proof}

\begin{corollary}\label{S77T1}
Let $(\lambda_1(\gamma),U_1(r,\gamma))\in\calC_1$.
Then, $\lambda_1(\gamma)\rightarrow\infty$ $(\gamma\downarrow 0)$.
Moreover, $\frac{d\lambda_1(\gamma)}{d\gamma}<0$ for small $\gamma>0$.
\end{corollary}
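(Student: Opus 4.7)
The strategy is to deduce both assertions from an analysis of the smooth family of boundary concentrating solutions $\{U_\lambda\}_{\lambda>\lambda_0}$ constructed in Corollary~\ref{Branch}. By Theorem~\ref{S77C1}, for sufficiently small $\gamma>0$ the solution $U_1(\cdot,\gamma)\in\calC_1$ is in fact $U_{\lambda_1(\gamma)}$, so writing $\gamma(\lambda):=U_\lambda(0)$ the two conclusions of the corollary reduce to $\gamma(\lambda)\to 0$ and $\gamma'(\lambda)<0$ as $\lambda\to\infty$; the latter will give $d\lambda_1/d\gamma=1/\gamma'(\lambda)<0$ via the inverse function theorem.

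For the first, Corollary~\ref{Branch} provides $|\gamma(\lambda)-w(0;\e)|<C\e$ with $\e=\lambda^{-1/2}$. Proposition~\ref{S71P1} gives the uniform bound $|\tilde w(s;\e)-w^*(s)|<\delta$ throughout $[0,d_\e]$ once $\e$ is small, and evaluating at $s=d_\e$ yields $|w(0;\e)-w^*(d_\e)|<\delta$. Since
\[
w^*(y)=\Bigl(\tfrac{p+1}{2}\Bigr)^{\!1/(p-1)}\Bigl(\cosh\bigl(\tfrac{p-1}{2}y\bigr)\Bigr)^{\!-2/(p-1)}\longrightarrow 0\quad(y\to\infty)
\]
and $d_\e=1/\e\to\infty$, we conclude $w(0;\e)\to 0$, hence $\gamma(\lambda)\to 0$ and $\lambda_1(\gamma)\to\infty$ as $\gamma\downarrow 0$.

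For the second, Lemma~\ref{ND} furnishes the uniform invertibility of $L_\lambda:=\lambda^{-1}\Delta+f'(U_\lambda)$ from $\bbX$ to $\bbY$, so the contraction mapping argument of Section~7.5 produces $U_\lambda$ as a $C^1$ function of $\lambda$. Differentiating $\lambda^{-1}\Delta U_\lambda+f(U_\lambda)=0$ in $\lambda$ yields
\[
L_\lambda[\partial_\lambda U_\lambda]=-\lambda^{-1}f(U_\lambda),
\]
and evaluation at $r=0$ gives $\gamma'(\lambda)=-\lambda^{-1}(L_\lambda^{-1}[f(U_\lambda)])(0)$. It therefore suffices to show $(L_\lambda^{-1}[f(U_\lambda)])(0)>0$ for all sufficiently large $\lambda$.

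The main obstacle is precisely this sign analysis. Corollary~\ref{Branch} together with Proposition~\ref{S71P1} implies $U_\lambda\le 1/2$ outside a boundary layer $[1-K\e,1]$, so on the interior region $f(U_\lambda)\le-1/4$ and $-f'(U_\lambda)\ge 1/2$, rendering $-L_\lambda$ uniformly coercive there with strictly positive source $-f(U_\lambda)$; inside the boundary layer, however, $f(U_\lambda)$ turns positive where $U_\lambda>1$, so a direct application of the maximum principle on all of $[0,1]$ fails. I would split $L_\lambda^{-1}[f(U_\lambda)]=\phi_{\mathrm{in}}+\phi_{\mathrm{bl}}$, where $\phi_{\mathrm{in}}$ solves $L_\lambda\phi_{\mathrm{in}}=\chi_{[0,1-K\e]}f(U_\lambda)$ and $\phi_{\mathrm{bl}}$ is driven by the boundary-layer source $\chi_{[1-K\e,1]}f(U_\lambda)$. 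A maximum principle argument on the bulk (together with the Neumann condition at $r=0$) provides a quantitative lower bound $\phi_{\mathrm{in}}(0)\ge c_0>0$, while Proposition~\ref{S73P1} applied to $-L_\lambda$ in the bulk, where its zero-order coefficient is uniformly positive, shows that $|\phi_{\mathrm{bl}}(0)|=O(e^{-c'/\e})$ is negligible. The sum is therefore strictly positive, completing the sign analysis and establishing the monotonicity.
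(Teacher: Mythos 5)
Your first half (showing $\gamma(\lambda)=U_\lambda(0)\to 0$ along the boundary-concentrating family via Corollary~\ref{Branch} and the profile bound of Proposition~\ref{S71P1}) is correct and is essentially the paper's argument. The problem is the second half. The paper does \emph{not} attempt any sign computation for $L_\lambda^{-1}[f(U_\lambda)](0)$: it uses Lemma~\ref{ND} (nondegeneracy) to conclude $\frac{d\lambda_1}{d\gamma}\neq 0$ whenever $\lambda_1(\gamma)$ is large (if $\lambda_1'(\gamma)=0$, then $\partial_\gamma U$, which is nontrivial since $\partial_\gamma U(0)=1$, would lie in the kernel of the linearization), and then the soft observation that a continuous nonvanishing derivative together with $\gamma(\lambda_1)\to 0$ as $\lambda_1\to\infty$ forces $\frac{d\lambda_1}{d\gamma}<0$; this in turn yields $\lambda_1(\gamma)\to\infty$ as $\gamma\downarrow 0$ (a step you also need, since $\gamma(\lambda)\to0$ alone does not exclude small $\gamma$ occurring at bounded $\lambda$).

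Your quantitative route has a genuine gap at its central step. You claim a lower bound $\phi_{\mathrm{in}}(0)\ge c_0>0$ with $c_0$ independent of $\e$, but this is false: by the exponential decay of the layer solution (Proposition~\ref{S73P1}, Lemma~\ref{AE1}-type estimates), $U_\lambda(r)=O(e^{-c(1-r)/\e})$ away from $r=1$, so the bulk source $\chi_{[0,1-K\e]}f(U_\lambda)$ is itself exponentially small except on an $O(\e)$-annulus adjacent to the layer, and the response of $\lambda^{-1}\Delta+f'(U_\lambda)$ (whose natural length scale is $\e$) at the origin is of order $e^{-c/\e}$, not order one. The boundary-layer contribution $\phi_{\mathrm{bl}}(0)$ that you propose to discard is of the \emph{same} exponential order $e^{-c'/\e}$, so the comparison you need is between two exponentially small quantities and cannot be settled by the stated estimates; it would require a genuinely finer Green's-function analysis. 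In addition, the maximum-principle step is not available as stated: $f'(U_\lambda)>0$ inside the layer (there $U_\lambda>1$), $L_\lambda$ has a positive eigenvalue $\kappa_0(\e)\approx(p-1)(p+3)/4$ (Proposition~\ref{S71P1}), so $L_\lambda^{-1}$ is not sign-preserving on $B$, and restricting the maximum principle to the bulk requires control of $\phi_{\mathrm{in}}$ at the interface $r=1-K\e$, which you have not obtained. (Also, the auxiliary claim that $U_\lambda\le 1/2$ implies $-f'(U_\lambda)\ge 1/2$ fails for $p$ near $1$ or $p=2$, though that is fixable by using the exponential smallness of $U_\lambda$ in the bulk.) The fix is to abandon the sign computation and argue as the paper does: nondegeneracy from Lemma~\ref{ND} plus $\gamma(\lambda_1)\to0$ already give both conclusions.
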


\begin{proof}
Because of Lemma~\ref{ND}, if $\e>0$ is small, then $U$ is nondegenerate, which indicates the following:
If $\lambda_1(\gamma)>0$ is large, then
\begin{equation}\label{S77T1E1}
\frac{d\lambda_1(\gamma)}{d\gamma}\neq 0.
\end{equation}
Because of Corollary~\ref{Branch}, $\left\| U-w\right\|_{\infty}<C\e$.
Since $\e=(\lambda_1(\gamma))^{-\frac{1}{2}}$, we have
\begin{align*}
(\gamma(\lambda_1)=)U(0) &\le|U(0)-w(0)|+|w(0)|\\
&<O(\frac{1}{\sqrt{\lambda_1}})+o(1)\quad (\lambda_1\rightarrow\infty).
\end{align*}
Therefore,
\begin{equation}\label{S77T1E2}
\gamma(\lambda_1)\rightarrow 0\quad (\lambda_1\rightarrow\infty).
\end{equation}
By (\ref{S77T1E1}) and (\ref{S77T1E2}) we see that
\begin{equation}\label{S77T1E3}
\frac{d\lambda_1(\gamma)}{d\gamma}<0\ \ \textrm{if}\ \ \lambda_1(\gamma)>0\ \ \textrm{is large.}
\end{equation}
By (\ref{S77T1E2}) and (\ref{S77T1E3}) we see that $\lambda_1(\gamma)\rightarrow\infty$ $(\gamma\downarrow 0)$.
\end{proof}

% ****************************************************************************
% ****************************************************************************
% ****************************************************************************
% Section 8
% ****************************************************************************
% ****************************************************************************
% ****************************************************************************
\section{Proof of Theorem~\ref{Th2}}
\begin{proof}[Proof of Theorem~\ref{Th2}]
(i) is mentioned in Section~1.
(ii) follows from Theorem~\ref{S5T2}.
(iii) is proved in Theorem~\ref{S6T1}.
(v) follows from Theorem~\ref{S77C1} and Corollary~\ref{S77T1}.
Let $u(s,\gamma)$ be the solution of (\ref{S3E2}).
Then, we already saw in Lemmas~\ref{BDDB} and \ref{GPR2} that if $\gamma\neq 1$, then $u(s,\gamma)$ has infinitely many critical points.
Let $\{s_n\}_{n=1}^{\infty}$ $(0<s_1<s_2<\cdots)$ denote the set of the critical points of $u(s,\gamma)$.
Since $\lambda_n(\gamma)=s_n^2$, (vi) holds.
Corollary~\ref{S77T1} says that $\lambda_1(\gamma)\rightarrow\infty$ $(\gamma\downarrow 0)$. Since $\lambda_1(\gamma)<\lambda_2(\gamma)<\cdots$, (iv) holds.

We prove (\ref{Th3E0}).
Let $(\lambda_0,U_0)\in\calS$, and let $m:=\calZ_{[0,1]}[U(\,\cdot\,)-1]$.
Proposition~\ref{LPR} says that there is a one-parameter family of solutions $\calC:=(\lambda(\gamma),U(\gamma))$ such that $(\lambda(0),U(0))=(\lambda_0,U_0)$.
This branch $\calC$ can be extended to $\gamma=1$, because of Lemmas~\ref{BDDB} and \ref{GPR2}.
Note that $\calZ_{[0,1]}[U(\,\cdot\,,\gamma)-1]=m$ if $(\lambda,U)\in\calC$ and $U\not\equiv 1$.
It follows from the uniqueness of the branch near $(\bar{\lambda}_m,1)$ that $\calC\subset\calC_m$.
Thus, $(\lambda_0,U_0)\in\calC\subset\calC_m$, and the (\ref{Th3E0}) holds.
The proof is complete.
\end{proof}

\begin{proof}[Proof of Corollary~\ref{Cor1}]
Let $\underline{\lambda}:=\inf_{\gamma\in\R_+}\lambda_1(\gamma)>0$.
Because of (ii) and (iv), $\underline{\lambda}>0$.
For each $n\ge 2$, $\lambda_n(\gamma)>\lambda_1(\gamma)(\ge\underline{\lambda})$.
This implies that the first assertion holds.
The other assertion follows from the boundedness of $\{\lambda_1(\gamma)\}_{\gamma>1}$.
\end{proof}

%\begin{proof}[Proof of Corollary~\ref{Cor2}]
%The statement clearly follows from (ii), (iii), and (iv) of Theorem~\ref{Th2}.
%\end{proof}

\noindent
{\bf Acknowledgement}
The author would like to thank the referee for the careful reading of the manuscript and many helpful comments.

% \begin{thebibliography}{00}


\begin{thebibliography}{99}
\bibitem{AMN03}{A.~Ambrosetti, A.~Malchiodi and W.-M.~Ni},
{\it Singularly perturbed elliptic equations with symmetry: existence of solutions concentrating on spheres. I},
Comm. Math. Phys. {\bf 235} (2003), 427--466.

\bibitem{AMN04}{A.~Ambrosetti, A.~Malchiodi and W.-M.~Ni},
{\it Singularly perturbed elliptic equations with symmetry: existence of solutions concentrating on spheres. II},
Indiana Univ. Math. J. {\bf 53} (2004), 297--329.

\bibitem{AY91}{Adimurthi and S.~Yadava},
{\it Existence and nonexistence of positive radial solutions of Neumann problems with critical Sobolev exponents},
Arch. Rational Mech. Anal. {\bf 115} (1991), 275--296.

\bibitem{AY97}{Adimurthi and S.~Yadava},
{\it Nonexistence of positive radial solutions of a quasilinear Neumann problem with a critical Sobolev exponent},
Arch. Rational Mech. Anal. {\bf 139} (1997), 239--253.

\bibitem{BKP91}{C.~Budd, M.~Knaap, and A.~Peletier},
{\it Asymptotic behavior of solutions of elliptic equations with critical exponents and Neumann boundary conditions},
Proc. Roy. Soc. Edinburgh Sect. A {\bf 117} (1991), 225--250.

\bibitem{BN87}{C.~Budd and J.~Norbury},
{\it Semilinear elliptic equations and supercritical growth},
J. Differential Equations {\bf 68} (1987), 169--197.

\bibitem{CCCT10}{J.~Chern, Z.~Chen, J.~Chen, and Y.~Tang},
{\it On the classification of standing wave solutions for the Schr\"odinger equation}.
Comm. Partial Differential Equations {\bf 35} (2010), 275--301.

\bibitem{CR71}{M.~Crandall and P.~Rabinowitz},
{\it Bifurcation from simple eigenvalues},
J. Functional Analysis {\bf 8} (1971), 321--340.

\bibitem{DF07}{J.~Dolbeault and I.~Flores},
{\it Geometry of phase space and solutions of semilinear elliptic equations in a ball},
Trans. Amer. Math. Soc. {\bf 359} (2007), 4073--4087.

\bibitem{DK02}{P.~de Groen and G.~Karadzhov},
{\it Metastability in the shadow system for Gierer-Meinhardt's equations},
Electron. J. Differential Equations (2002), No. 50, 22 pp. (electronic).

\bibitem{DMP05}{M.~del Pino, M.~Musso, and A.~Pistoia},
{\it Super-critical boundary bubbling in a semilinear Neumann problem},
Ann. Inst. H. Poincar\'e Anal. Non Lin\'eaire {\bf 22} (2005), 45--82.

\bibitem{DN85}{W,-Y.~Ding and W.-M.~Ni},
{\it On the elliptic equation $\Delta u+Ku^{(N+2)/(N-2)}=0$ and related topics},
Duke Math. J. {\bf 52} (1985), 485--506.

\bibitem{F73}{P.~Fife},
{\it Semilinear elliptic boundary value problems with small parameters},
Arch. Rational Mech. Anal. {\bf 52} (1973), 205--232.

\bibitem{GN12}{M.~Grossi and B.~Noris},
{\it Positive constrained minimizers for supercritical problems in the ball},
Proc. Amer. Math. Soc. {\bf 140} (2012), 2141--2154.

\bibitem{GM72}{A.~Gierer {\rm and} H.~Meinhardt},
{\it A theory of biological pattern formation},
Kybernetik (Berlin) {\bf 12} (1972), 30--39.

\bibitem{GT83}{D.~Gilbarg and N.~Trudinger},
{Elliptic partial differential equations of second order. Second edition},
{\it Grundlehren der Mathematischen Wissenschaften, 224. Springer-Verlag, Berlin, 1983.}

\bibitem{GW00}{C.~Gui and J.~Wei},
{\it On multiple mixed interior and boundary peak solutions for some singularly perturbed Neumann problems},
Canad. J. Math. {\bf 52} (2000), 522--538.

\bibitem{GW11}{Z.~Guo and J.~Wei},
{\it Global solution branch and Morse index estimates of a semilinear elliptic equation with super-critical exponent},
Trans. Amer. Math. Soc. {\bf 363} (2011), 4777--4799.

\bibitem{H81}{D.~Henry},
{Geometric theory of semilinear parabolic equations},
{\it Lecture Notes in Mathematics, 840. Springer-Verlag, Berlin-New York, 1981.}

\bibitem{JL73}{D.~Joseph and T.~Lundgren},
{\it Quasilinear Dirichlet problems driven by positive sources},
Arch. Rational Mech. Anal. {\bf 49} (1973), 241--269.

\bibitem{K89}{M.~Kwong},
{\it Uniqueness of positive solutions of $\Delta u-u+u^p=0$ in ${\bf R}^n$},
Arch. Rational Mech. Anal. {\bf 105} (1989), 243--266.

\bibitem{KS70}{E.~Keller and L.~Segal},
{\it Initiation of slime mold aggregation viewed as an instability},
J. Theor. Biol. {\bf 26} (1970), 399--415.

\bibitem{LN88}{C.-S.~Lin and W.-M.~Ni},
{\it On the diffusion coefficient of a semilinear Neumann problem},
{Calculus of variations and partial differential equations (Trento, 1986), 160--174, 
Lecture Notes in Math., 1340, Springer, Berlin, 1988.}

\bibitem{MM02}{A.~Malchiodi and M.~Montenegro},
{\it Boundary concentration phenomena for a singularly perturbed elliptic problem},
Comm. Pure Appl. Math. {\bf 55} (2002), 1507--1568.

\bibitem{MP91}{F.~Merle and L.~Peletier},
{\it Positive solutions of elliptic equations involving supercritical growth},
Proc. Roy. Soc. Edinburgh Sect. A {\bf 118} (1991), 49--62.

\bibitem{Mi13}{Y.~Miyamoto},
{\it Structure of the positive solutions for supercritical elliptic equations in a ball}, preprint.

\bibitem{MY13}{Y~.Miyamoto and K.~Yagasaki},
{\it Monotonicity of the first eigenvalue and the global bifurcation diagram for the branch of interior peak solutions},
J. Differential Equations {\bf 254} (2013), 342--367.

\bibitem{N83}{W.-M.~Ni},
{\it On the positive radial solutions of some semilinear elliptic equations on $R^n$},
Appl. Math. Optim. {\bf 9} (1983), 373--380.

\bibitem{NS86}{W.-M.~Ni and J.~Serrin},
{\it Nonexistence theorems for singular solutions of quasilinear partial differential equations},
Comm. Pure Appl. Math. {\bf 39} (1986), 379--399.

\bibitem{NT91}{W.-M.~Ni and I.~Takagi},
{\it On the shape of least-energy solutions to a semilinear Neumann problem},
Comm. Pure Appl. Math. {\bf 44} (1991), 819--851.

\bibitem{NY88}{W.-M.~Ni and S.~Yotsutani},
{\it Semilinear elliptic equations of Matukuma-type and related topics},
Japan J. Appl. Math. {\bf 5 } (1988), 1--32


\bibitem{Po65}{S.~Poho\v{z}aev},
{\it On the eigenfunctions of the equation $\Delta u+\lambda f(u)=0$},
Dokl. Akad. Nauk SSSR {\bf 165} (1965) 36--39.

\bibitem{W97}{J.~Wei},
{\it On the boundary spike layer solutions to a singularly perturbed Neumann problem},
J. Differential Equations {\bf 134} (1997), 104--133.

\bibitem{W98}{J.~Wei},
{\it On the interior spike layer solutions to a singularly perturbed Neumann problem},
Tohoku Math. J. {\bf 50} (1998), 159--178.

\bibitem{W99}{J.~Wei},
{\it On single interior spike solutions of the Gierer-Meinhardt system: uniqueness and spectrum estimates},
European J. Appl. Math. {\bf 10} (1999), 353--378.

\end{thebibliography}
\end{document}